\tikzset{>=stealth',
  head/.style = {fill = white, text=black},
  plaque/.style = {draw, rectangle, minimum size = 10mm},
  pil/.style={->,thick},
  junct/.style = {draw,circle,inner sep=0.5pt,outer sep=0pt, fill=black}
  }
\tikzstyle{arrow}=[thick, ->, >=stealth]
\tikzset{ edge/.style={->,> = latex'} }
\definecolor{darkred}{rgb}{0.7,0,0}
\definecolor{darkgreen}{rgb}{0, .6, 0}
\definecolor{orange}{rgb}{1,0.4,0}
\newcommand{\defncolor}{\color{darkred}}
\newcommand{\defn}[1]{{\defncolor\emph{#1}}} 
\theoremstyle{plain}
\newtheorem{theorem}{Theorem}[section]
\newtheorem{lemma}[theorem]{Lemma}
\newtheorem{proposition}[theorem]{Proposition}
\newtheorem{corollary}[theorem]{Corollary}
\numberwithin{equation}{section}
\theoremstyle{definition}
\newtheorem{definition}[theorem]{Definition}
\newtheorem{example}[theorem]{Example}
\newtheorem{remark}[theorem]{Remark}
\newtheorem{question}[theorem]{Question}
\newcommand{\Spl}{\mathsf{Spl}}
\newcommand{\rank}{\mathsf{rank}}
\begin{document}
\title[Dimensions of splines of degree two]{Dimension of splines on graphs in the case of degree two and smoothness one in two variables}
\author[Nazir]{Shaheen Nazir}
\address[S. Nazir]{Department of Mathematics,
Lahore University of Management Sciences, DHA, Lahore Cantt. 54792, Lahore, Pakistan}
\email{shaheen.nazir@lums.edu.pk}

\author[Schilling]{Anne Schilling}
\address[A. Schilling]{Department of Mathematics, University of California, One Shields
Avenue, Davis, CA 95616-8633, U.S.A.}
\email{anne@math.ucdavis.edu}
\urladdr{http://www.math.ucdavis.edu/\~{}anne}

\author[Tymoczko]{Julianna Tymoczko}
\address[J. Tymoczko]{Department of Mathematics and Statistics, Smith College, Northampton, MA, U.S.A}
\email{jtymoczko@smith.edu}

\keywords{Generalized splines, degree two, upper-bound conjecture}

\makeatletter
\@namedef{subjclassname@2020}{%
  \textup{2020} Mathematics Subject Classification}
\makeatother

\subjclass[2020]{05C25 (Primary); 05C78, 41A15 (Secondary)}

\begin{abstract}
Continuous spline functions are defined as piecewise polynomials on the faces of a polyhedral
complex that agree on the intersections of two faces. Splines are used in approximation
theory and numerical analysis, with applications in data interpolation, to create
smooth curves in computer graphics and to find numerical solutions to partial differential
equations. Gilbert, Tymoczko, and Viel generalized the classical splines combinatorially and 
algebraically:  a generalized spline is a vertex labeling of a graph $G$ by elements
of the ring so that the difference between the labels of any two adjacent vertices lies in the ideal generated by the 
corresponding edge label. We study the generalized splines on the planar graphs whose 
edges are labeled by two-variable polynomials of the form $(ax+by+c)^2$ and whose vertices are 
labeled by polynomials of degree at most two.  In this paper we address the upper-bound conjecture
for the dimension of degree-2 splines of smoothness 1. The dimension is
expressed in terms of the rank of the extended cycle basis matrix. We also provide a combinatorial
algorithm on graphs to compute the rank by contracting certain subgraphs.
\end{abstract}

\maketitle

\section{Introduction}

Splines refer to a wide class of functions that are used in computer graphics and design applications requiring data interpolation and smoothing. 
The term originated from flexible devices used for shipbuilding to draw smooth shapes that have certain key parameters.
In mathematics, splines are defined as piecewise polynomials on a combinatorial partition of a geometric object that agree up to some specified 
differentiability on the intersection of the top-dimensional pieces of the partition.

One major open question in the field comes from Strang, and is sometimes referred to as the \defn{upper-bound conjecture}.  
Strang asked for a general formula for the dimension of splines of degree 
at most $d$ and smoothness $r$ on triangulated regions in the plane~\cite{Strang.1973}, later conjecturing a formula in the generic 
case~\cite{Strang.1974}.  Schumaker proved a lower bound for this dimension when $r \leqslant d$ ~\cite{Schumaker.1979}, and 
produced early work on upper bounds~\cite{Schumaker.1984}.  In Schumaker's work, and also more generally, sharp upper bounds can give rise to exact formulae and are in some cases conjectured to coincide with the lower-bound formula. When the smoothness is 
$r=1$, Strang's dimension formula is
\begin{itemize}
    \item proven for degree $d \geqslant 4$~\cite{AS.1990, Hong.1991},
    \item conjectured to coincide with Schumaker's lower bound for degree $d=3$~\cite{Strang.1973,Strang.1974} and proven for 
    {\it generic triangulations} in~\cite{Billera.1988} though open for general triangulations, and
    \item proven for \textit{generic triangulations} in~\cite{Billera.1988,Wh.1991} when $d=2$.
\end{itemize}
For further details, see Lai and Schumaker's survey~\cite[Chapter 9]{LaiSchumaker.2007} and the recent paper~\cite{SSY.2020}.

This paper addresses the last, in our view most mysterious part of this open question: to develop formulas for the dimension of degree-$2$ splines 
of smoothness $1$. Our algorithm to compute the dimension of splines of degree two, thereby proving the upper-bound conjecture 
for $d = 2$ for graphs with edge labels not in special position, is summarized in Section~\ref{section.algorithm}.
We use the more general setting of \defn{generalized splines} introduced by Gilbert, Tymoczko, and Viel~\cite{GTV.2016}. Fix a commutative ring $R$ with 
identity~$1$ and a finite graph $G=(V,E)$ with vertex set $V$ and edge set $E$. If there is an edge $e\in E$ between vertex $u$ and 
vertex $v$ in $V$, we also denote this edge by $e=uv$.
An \defn{edge-labeling} for $G$ is a function $\ell \colon E \rightarrow R$.
We denote the edge-labeled graph by $(G,\ell)$.

\begin{definition}
\label{definition.splines}
A \defn{spline} on the edge-labeled graph $(G,\ell)$ is a map $p\colon V \to R$ so that for each edge $e=uv \in E$ we have 
\[
	p(u)-p(v) \textup{ is a multiple of } \ell(uv).
\]
We may also identify the map $p$ as a tuple $p \in R^{|V|}$, where the entry in position $u\in V$ is $p(u)$.
\end{definition}

The collection of all splines $\mathsf{Spl}(G,\ell)$ on $(G,\ell)$ forms a ring and an $R$-module with vertex-wise addition, multiplication, and scaling, 
see~\cite[Proposition 2.4]{GTV.2016}.  

\begin{example}\label{example.graph}
Consider the edge-labeled graph in the center of Figure~\ref{figure.spline example} with edges labeled by polynomials in $R=\mathbb{C}[x,y]$ given 
in red on the left.  One possible way to label the vertices so that $p(u)-p(v)$ is a multiple of $\ell(uv)$ is given in blue on the right.

\begin{figure}[!htb]
\begin{minipage}{0.3\textwidth}
\begin{tikzpicture}[scale=0.5]
\draw (0,0) rectangle (2,2);
\draw (2,0) rectangle (4,2);
\draw (4,2) to (6,2);
\node at (-0.6,1) {\textcolor{red}{$x^2$}};
\node at (1.2,2.4){\textcolor{red}{$y^2$}};
\node at (1.2,-0.5){\textcolor{red}{$x$}};
\node at (2.5,1){\textcolor{red}{$y$}};
\node at (3.2,2.3){\textcolor{red}{$xy$}};
\node at (3.2,-0.5){\textcolor{red}{$y^2$}};
\node at (4.5,1){\textcolor{red}{$xy$}};
\node at (5.2,2.4){\textcolor{red}{$x^2+y^2$}};
\node at (0,)[anchor=south east]{};
\node at (0,0){$\bullet$};
\node at (0,2){$\bullet$};
\node at (2,0){$\bullet$};
\node at (4,0){$\bullet$};
\node at (2,2){$\bullet$};
\node at (4,2){$\bullet$};
\node at (6,2){$\bullet$};
\end{tikzpicture}   
\end{minipage}
\begin{minipage}{0.3\textwidth}
\begin{tikzpicture}[scale=0.5]
\draw (0,0) rectangle (2,2);
\draw (2,0) rectangle (4,2);
\draw (4,2) to (6,2);
\node at (-0.6,1) {$e_1$};
\node at (1.2,2.3){$e_2$};
\node at (1.2,-0.5){$e_3$};
\node at (2.5,1){$e_4$};
\node at (3.2,2.3){$e_5$};
\node at (3.2,-0.5){$e_6$};
\node at (4.5,1){$e_7$};
\node at (5.2,2.3){$e_8$};
\node at (0,)[anchor=south east]{};
\node at (0,0){$\bullet$};
\node at (0,2){$\bullet$};
\node at (2,0){$\bullet$};
\node at (4,0){$\bullet$};
\node at (2,2){$\bullet$};
\node at (4,2){$\bullet$};
\node at (6,2){$\bullet$};
\end{tikzpicture}   
\end{minipage}
\begin{minipage}{0.3\textwidth}
\begin{tikzpicture}[scale=0.5]
\draw (0,0) rectangle (2,2);
\draw (2,0) rectangle (4,2);
\draw (4,2) to (6,2);
\node at (0,-0.5) {\textcolor{blue}{$0$}};
\node at (-0.2,2.2){\textcolor{blue}{$x^2$}};
\node at (1.7,-0.5){\textcolor{blue}{$x^2$}};
\node at (6.8,2){\textcolor{blue}{$2xy$}};
\node at (1.9,2.5){\textcolor{blue}{$x^2 \hspace{-0.35em} +\hspace{-0.1em}y^2$}};
\node at (4.5,2.5){\textcolor{blue}{$(x\hspace{-.25em}+\hspace{-0.25em}y)^2$}};
\node at (4.5,-.5){\textcolor{blue}{$x^2+y^2$}};
\node at (0,)[anchor=south east]{};
\node at (0,0){$\bullet$};
\node at (0,2){$\bullet$};
\node at (2,0){$\bullet$};
\node at (4,0){$\bullet$};
\node at (2,2){$\bullet$};
\node at (4,2){$\bullet$};
\node at (6,2){$\bullet$};
\end{tikzpicture}   
\end{minipage}
\caption{Example of spline given in Example~\ref{example.graph}. Center: Graph $G=(V,E)$
with edges labeled $e_1,\ldots,e_8$. Left: Edge-labeled graph $(G,\ell)$ with edge labels indicated
in red. Right: A spline with the function $p$ on the vertices indicated in blue.
\label{figure.spline example}}
\end{figure}

Note that all vertex-labelings that satisfy Definition~\ref{definition.splines} also have the property that the difference of any polynomials added around 
a cycle must be zero.  For instance, going over edges $e_1, e_2, e_4, e_3$ adds $x^2$, then $y^2$, then $-y(y)$, then $-x(x)$ respectively, for a net 
change of $0$ from the entry in the bottom-left corner.
\end{example}

The theory of generalized splines unifies questions across a wide range of fields.  When the coefficient ring $R$ is the integers, generalized splines extend 
classical questions in number theory like the Chinese Remainder Theorem.  When $R$ is a complex polynomial ring, generalized splines appear in 
torus-equivariant cohomology. Goresky, Kottwitz, and MacPherson~\cite{GKM.1998} identified topological conditions to encode the $T$-action on certain 
varieties $X$ by a labeled graph $G_X$ whose vertices are $T$-fixed points, edges are one-dimensional $T$-orbits, and edge-labels $\ell_X$ are $T$-weights,
and for which
\[
	H^*_T(X) \cong \textup{ splines on } (G_X,\ell_X). 
\]

Most important for us is the fact that Definition~\ref{definition.splines} of splines is dual (in an algebraic and combinatorial sense) to the classical definition of 
splines~\cite[Theorem 2.4]{Billera.1991}, for a broad family of triangulations in the plane.  We use the coefficient ring $R = \mathbb{C}[x,y]$ and consider the
graph $G$ which is dual as a planar graph to the triangulation. If an edge of the original triangulation is on the line $ax+by+c=0$, then its dual edge in $G$ is 
labeled $(ax+by+c)^{r+1}$, where $r$ is the desired smoothness of the splines.  In our case, we further reduce to the case that each edge label has the form 
$\ell(uv) = (x+ay)^2$. By translating the original triangulation,  we can assume this about any individual face without loss of generality. This is shown
in Section~\ref{section.homogeneous} and uses the notion of ``face-translatability'' (see Definition~\ref{definition.face translate}). Other than these assumptions 
on the edge labels, we do not use specific properties of the graph as a dual graph to a triangulation. For example, we do not use that the interior
vertices of $G$ are trivalent. Our main constraints on $G$ are that $G$ is a finite, planar graph without holes and two faces share at most one edge. 

In this paper, we provide a combinatorial algorithm and formula to compute the dimension of splines for degree $d=2$ and smoothness $r=1$.
We transform this problem into an equivalent linear algebra problem of analyzing the rank of a matrix $M^{\mathsf{ext}}$ built from the \defn{face cycle basis} 
of $G$ (see Definition~\ref{definition.extended cycle basis matrix} and Theorem~\ref{theorem.spline rank}). We then focus on the question of when this 
matrix is full rank. We show that this question has three related but separate parts: a \textbf{combinatorial} one asking  whether the underlying graph has three 
distinct Euler cycles (a condition that can be slightly weakened); and an \textbf{algebraic} one asking which 
symmetries a particular determinant has, related to the determinant being symmetric in a subset of variables. 
The combinatorial question concerns the existence of \defn{edge-injective function} on the graph (see Section~\ref{section.edge injective} for the definition
and Section~\ref{section.existence} for a condition of the existence of edge-injective functions).
The algebraic question gives rise to the definition of \defn{generic edge labels} (see Section~\ref{section.generic} for the definition and
Section~\ref{section.existence generic} for conditions of existence of generic edge labels). The third ingredient is of \textbf{topological} nature.
We identify components in the edge-labeled graph which can be contracted without changing the dimension 
of the spline space even in the non-generic case.

Our algorithm to determine the dimension of the degree two splines is based on a condition called \defn{contractibility} (see 
Section~\ref{section.contractible}) under which the problem reduces to smaller cases. 
More concretely, our algorithm contracts \defn{minimal contractible subgraphs}, which can be identified combinatorially using edge-injective functions.
Contracting minimal contractible subgraphs does not change the dimension of the splines for generic edge labels.
Hence graphs without contractible subgraphs form the ``fundamental building blocks" for which the dimension 
of the spline space is the number of edges minus three times 
the number of faces (see Section~\ref{section.algorithm} for more details). 

We emphasize that this method can be used to compute the dimension even for what we refer to as \defn{non-generic edge-labeled graphs}, 
which are graphs for which the determinant of the extended cycle basis matrix is identically zero as a polynomial in the edge label variables. Indeed, if a 
graph has non-generic edge labels, then our contraction algorithm successively reduces to a smaller generic edge labeled graph without 
changing the dimension of the spline space.

Moreover, in Theorem~\ref{theorem: special position formula}, we give an explicit presentation of the polynomials defining the special locus of edge labels.  
In other words, we describe a set of polynomials in the edge labels that vanish if and only if the dimension of the space of splines is larger than expected.  
Given an edge-labeled graph, these polynomials can be used to test whether or not the graph is generic.

We list some related open problems and potentially-fruitful extensions of this work. Much of our work assumes that the edge labels are not in special 
position. It would be interesting to see whether this condition can be relaxed. Furthermore, the splines we consider are over the polynomial ring 
$\mathbb{C}[x,y]$ and adapt easily to $\mathbb{R}[x,y]$, but we expect interesting results with other coefficient rings, especially rings with nonzero 
characteristic or polynomial rings in more variables.  It would also be interesting to extend the analysis in this paper to degree 3 splines.

\subsection*{Planar triangulations}
The case when the edge labeled graph $G_{\Delta}$ is dual to a planar triangulation $\Delta$ is of considerable interest.  For this reason, we conclude this introduction with a summary of our main results specialized to this case.  We create the dual graph $G_{\Delta}$ by considering $\Delta$ as a planar graph, that is, with one vertex of $G_{\Delta}$ for each bounded face of $\Delta$ and an edge between vertices in $G_{\Delta}$ if the corresponding faces in $\Delta$ share a (necessarily interior) edge. If an (interior) edge in $\Delta$ lies on the line given by the equation $ax+by+c=0$, then label the corresponding edge in $G_{\Delta}$ with $(ax+by+c)^2$.  Figures~\ref{figure: homogeneous and particular example} and~\ref{figure.3squares}  show examples of dual graphs. With this labeling, we have the following relationship between classical splines and the construction we use:
\[\Spl(G_{\Delta}, \ell) \cong S^1(\Delta).\] 
More generally, increasing the exponent on the edge label increases the differentiability constraint $r$ in the spline space $S^r(\Delta)$. (See also Remark~\ref{remark: spline notation for analysts}.)  We restrict to splines of degree at most two, and further disregard constant splines, so our dimension formulas correspond to classical formulas via
\[
	\dim \; \Spl_2(G_{\Delta}, \ell) = \dim \; S^1_2(\Delta)-6.
\]
In Section~\ref{section.homogeneous}, we homogenize each edge label by simply omitting the constant, creating a new labeling $\ell_0$.  In other words, we replace $(ax+by+c)^2$ by $(ax+by)^2$ on each edge label, in essence translating each interior vertex of $\Delta$ to the origin simultaneously.  This is different from most kinds of homogenization in the literature, and we know of no natural geometric interpretation (see Remark~\ref{remark: homogenization contrast}).  At the same time, the main goal of Section~\ref{section.homogeneous} is to show that 
\[\Spl_2(G_{\Delta}, \ell) \cong \Spl_2(G_{\Delta}, \ell_0)\]
for all graphs $G_{\Delta}$ arising as duals to planar triangulations (as well as many other graphs).  

In Section~\ref{section.contractible}, we define and analyze a natural  topological condition called contractibility.  In the case of graphs $G_{\Delta}$, 
Corollary~\ref{corollary: contractible triangulation means subdivided triangle} proves that a contractible subgraph of $G_{\Delta}$ is dual precisely 
to a subdivided triangle in $\Delta$, which is the special case Whiteley uses~\cite[Section 3]{Wh.unpublished}.  Finally, 
Section~\ref{section.existence generic} uses purely graph-theoretic arguments to compute the dimension of splines for $\Spl_2(G_{\Delta}, \ell_0)$ 
with no contractible subgraphs in the generic case, giving Whiteley's result \cite{Wh.unpublished} as a special case. Moreover, the polynomials we 
give in Theorem~\ref{theorem: special position formula} explicitly define conditions to tell when $\Delta$ is non-generic.

\section*{Acknowledgements}
We thank Susanna Fishel, Pamela Harris, Rosa Orellana, and Stephanie van Willigenburg for organizing the 
Research Community in Algebraic Combinatorics at ICERM in 2021-2022. This project started as part of this program by Zoom.
We would also like to thank the Simons Laufer Mathematical Sciences Institute (SLMath, formerly MSRI) for supporting us to attend
the 2023 Summer Research in Mathematics (SRiM), where this paper was completed in person!
This material is based upon work supported by the National Science Foundation under Grant No. DMS-1928930, while the authors were
in residence at the Mathematical Sciences Research Institute in Berkeley, California.

The authors would like to thank Peter Alfeld, Michael DiPasquale, Tanya Sorokina, Nelly Villamizar, and Walter Whiteley for helpful discussions.

AS was partially supported by NSF grants DMS--1760329 and DMS--2053350, and Simons Foundation grant MPS-TSM-00007191.
JT was partially supported by NSF grants DMS--2349088, DMS--2054513, and DMS--1800773, and by an AWM-MERP fellowship.

\section{Planar geometry and the cycle basis matrix for splines}
\label{section.geometry}

In this section, we provide a formal definition of splines of degree $d$ and use the cycle basis of a planar graph to construct an extended cycle 
basis matrix that captures all conditions defining splines of degree $2$ on the graph.  

\subsection{Splines of degree $2$}

Splines over a polynomial ring inherit a notion of degree.  We define the degree of a spline here, as well as several other notions related to this 
particular case.

\begin{definition}
If $R$ is a polynomial ring, then the \defn{degree} of the spline $p \in \Spl(G,\ell)$ is defined as
\[
	\deg(p) = \max_{v \in V(G)} \deg(p(v)).
\]
We denote the set of splines of degree at most $d$ by $\Spl_{\leqslant d}(G,\ell)$ and splines of degree exactly $d$ by $\Spl_d(G,\ell)$.
\end{definition}

Note that if $R$ is a polynomial ring over $\mathbb{C}$, $\mathbb{R}$, or any other field of characteristic zero, then the degree of a sum of 
splines is at most the maximum of the degree of each spline.  This means that the collection of splines of degree at most $d$ forms a vector space 
over the base field (and an $R$-submodule).  However, we lose the ring structure of splines when we restrict to a specific degree.

\begin{remark} \label{remark: spline notation for analysts}
Notational conventions for analysts differ from those for algebraists.  In classical splines, the notation $S^r_d(\Delta)$ denotes splines of 
degree at most $d$, which corresponds to what we call $\Spl_{\leqslant d}(G,\ell)$ here.  In algebra, the ring $R$ is \defn{graded} if it decomposes 
$R = \bigoplus_d R_d$ into $R$-submodules $R_d$ such that $r_i \in R_i, r_j \in R_j$ means that $r_ir_j \in R_{i+j}$.  In this case, the $R_d$ are 
called the \defn{homogeneous} graded submodules. Our notation $\Spl_d(G,\ell)$ agrees with the algebraic conventions.
\end{remark}

Under reasonable assumptions, the splines $\Spl(G,\ell)$ form a graded ring. For a graph $G=(V,E)$, we also denote by $V(G)=V$ its vertex 
set and by $E(G)=E$ its edge set if we want to emphasize the dependence on the graph.
\begin{definition}
 A \defn{homogeneous} polynomial is one whose (nonzero) monomial terms all have the same degree. A \defn{homogeneous spline of degree $d$} 
is one for which $p(v)$ is either zero or a homogeneous polynomial of degree $d$ for every $v\in V(G)$.  An edge-labeling $\ell \colon E(G) \rightarrow R$ 
is \defn{homogeneous} if $\ell(e)$ is a homogeneous polynomial for all $e \in E(G)$.
\end{definition}

The following is proven in~\cite{AMT.2021}.

\begin{proposition}
Suppose that $(G,\ell)$ is an edge-labeled graph. If $\ell(e)$ is a homogeneous polynomial for each edge $e \in E(G)$, then $\Spl(G,\ell)$ is a
graded ring with homogeneous graded parts $\Spl_d(G,\ell)$.
\end{proposition}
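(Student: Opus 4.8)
The plan is to establish two things: first, that $\Spl(G,\ell)$ decomposes as an internal direct sum $\bigoplus_d \Spl_d(G,\ell)$ of its homogeneous graded parts, and second, that this decomposition is compatible with the vertex-wise multiplication, that is, $\Spl_i(G,\ell)\cdot\Spl_j(G,\ell)\subseteq\Spl_{i+j}(G,\ell)$. Throughout I use the standard grading $R=\bigoplus_d R_d$, under which every $f\in R$ has a unique decomposition $f=\sum_d f_d$ into homogeneous components $f_d\in R_d$.

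The crux is the following lemma: if $p\in\Spl(G,\ell)$ and we let $p_d$ denote the vertex-labeling $v\mapsto p(v)_d$ sending each vertex to the degree-$d$ homogeneous component of its label, then $p_d$ is again a spline. To prove this, fix an edge $uv$ and write $p(u)-p(v)=\ell(uv)\,q$ for some $q\in R$, which exists by Definition~\ref{definition.splines}. Since $\ell(uv)$ is homogeneous of some degree $e$, the product $\ell(uv)\,q_{d-e}$ of $\ell(uv)$ with the degree-$(d-e)$ homogeneous component $q_{d-e}$ of $q$ is exactly the degree-$d$ homogeneous component of $\ell(uv)\,q$. Taking degree-$d$ components of both sides of $p(u)-p(v)=\ell(uv)\,q$ therefore yields $p(u)_d-p(v)_d=\ell(uv)\,q_{d-e}$, which is a multiple of $\ell(uv)$; hence $p_d$ satisfies the spline condition on every edge. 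Because each $p(v)_d$ is zero or homogeneous of degree $d$, in fact $p_d\in\Spl_d(G,\ell)$. This is the step that genuinely uses the hypothesis that every edge label is homogeneous, and I expect it to be the main (indeed only) obstacle: without homogeneity of $\ell(uv)$, the degree-$d$ component of $\ell(uv)\,q$ would not be a single multiple of $\ell(uv)$ and the argument collapses.

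Granting the lemma, the direct-sum decomposition follows quickly. Each spline $p$ has only finitely many nonzero homogeneous components $p_d$, and $p=\sum_d p_d$ with $p_d\in\Spl_d(G,\ell)$, so $\Spl(G,\ell)=\sum_d\Spl_d(G,\ell)$. The sum is direct because the homogeneous decomposition of a polynomial is unique: if $\sum_d p_d=0$ with $p_d\in\Spl_d(G,\ell)$, then evaluating the label at each vertex $v$ gives $\sum_d p(v)_d=0$, which forces every $p(v)_d=0$ and hence $p_d=0$.

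Finally I would verify the ring compatibility. The set $\Spl(G,\ell)$ is already a ring under vertex-wise multiplication, with multiplicative identity the constant labeling $\mathbf{1}$, which lies in $\Spl_0(G,\ell)$. If $p\in\Spl_i(G,\ell)$ and $q\in\Spl_j(G,\ell)$, then for each vertex $v$ the product $(pq)(v)=p(v)\,q(v)$ is a product of homogeneous polynomials of degrees $i$ and $j$ (allowing the zero polynomial), hence zero or homogeneous of degree $i+j$; since $pq$ is already known to be a spline, we conclude $pq\in\Spl_{i+j}(G,\ell)$. Combined with the direct-sum decomposition above, this exhibits $\Spl(G,\ell)$ as a graded ring with graded pieces $\Spl_d(G,\ell)$. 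Everything apart from the homogeneity lemma is routine bookkeeping with the grading on $R$.
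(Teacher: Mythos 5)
Your proof is correct and complete. The paper itself offers no proof of this proposition---it defers to the citation \cite{AMT.2021}---but your argument is the standard one and correctly isolates the key point: taking vertex-wise homogeneous components $p_d$ of a spline $p$, the identity $p(u)_d - p(v)_d = \ell(uv)\,q_{d-e}$ (valid precisely because $\ell(uv)$ is homogeneous of some degree $e$) shows each $p_d$ is again a spline, after which the direct-sum decomposition and the multiplicativity of the grading are routine.
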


For every edge-labeled graph, the splines $\Spl(G,\ell)$ have a trivial and freely-generated submodule consisting of \defn{constant splines}, 
namely the splines $p \in \Spl(G,\ell)$ with $p(u)=p(v)$ for all vertices $u, v \in V(G)$.  Our formulas count the dimension of the complement 
of the constant splines.  In other words, we study the following space.

\begin{definition}
Let $R$ be a polynomial ring and denote the homogeneous polynomials of degree $d$ by $R_d$.  Suppose that $\Spl(G,\ell)$ is graded by degree 
of the splines, that $1 \in \Spl(G,\ell)$ denotes the (multiplicative) identity spline, and that $v_0$ is a fixed vertex in $G$.  We write 
\[
	\Spl_d(G,\ell) = R_d 1 \oplus \Spl_d(G,\ell; v_0),
\]
where $\Spl_d(G,\ell; v_0)$ is the set of splines with a fixed value at vertex $v_0$.
\end{definition}

In this paper, we determine the dimension of $\Spl_2(G,\ell; v_0)$, which differs by one from the dimension of $\Spl_2(G,\ell)$. Note also that if
one requires $p(v_0)=0$, then this is equivalent to restricting to non-constant splines.

For most partitions $\Delta \subseteq \mathbb{R}^n$ of interest in applications, Billera and Rose~\cite{billera1991dimension} constructed a related partition 
$\hat{\Delta} \subseteq \mathbb{R}^{n+1}$ over $\Delta$ that satisfies
\[
	S^r_d(\hat{\Delta}) \cong S^r_d(\Delta).
\]
(This partition $\hat{\Delta}$ is the cone over $\Delta$ and is defined by adding a variable $x_{n+1}$ and homogenizing the edge-labels as in 
algebraic geometry, namely multiplying each term by the appropriate power of $x_{n+1}$ to make each edge-label a homogeneous polynomial.)

We will homogenize $\Spl(G_{\Delta},\ell_{\Delta})$ differently as shown in Section~\ref{section.homogeneous}.

\subsection{Constructing the cycle basis matrix}

Consider a finite, connected, planar graph $G=(V,E)$. A \defn{cycle} in $G$ is a sequence of vertices $v_1 v_2 \ldots v_k v_1$ such that
$v_i\in V$, $v_i v_{i+1} \in E$ for $1\leqslant i <k$, and $v_k v_1 \in E$.

If $G$ is embedded into the plane, each face of the embedding is bounded by a cycle of edges. MacLane~\cite{MacLane.1937} showed that a 
graph is planar if and only if the graph contains a complete set of cycles such that no edge appears in more than two cycles.

When a planar graph is drawn without any crossing edges, it divides the plane into a set of regions, called \defn{faces}. Each face is bounded 
by a cycle called the boundary of the face or \defn{face cycle}. By convention, we count the unbounded area outside the whole graph as one face. 
It follows from Euler's formula for planar graphs that the set of face cycles is linearly independent. Hence the set of face cycles in a planar
graph forms a \defn{cycle basis} (see~\cite{Diestel.2010, Diestel.2011, Diestel.2012}).

Assume that the edges $E$ are directed.  Choose an order on the set of edges $E=\{e_1,e_2,\ldots,e_{e_G}\}$ and an order on the set of bounded 
faces $\mathcal{F} = \{F_1,F_2,\ldots,F_{f_G}\}$ of $G$. Define the \defn{cycle basis matrix} $M$ to be the matrix with $f_G$ rows and $e_G$ columns 
such that the entry in position $(r,c)$, that is in row $r$ and column $c$, is 
\begin{itemize}
    \item $0$ if edge $e_c$ is not on the face cycle of $F_r$, 
    \item $1$ if edge $e_c$ is an edge on the face cycle of $F_r$ and points clockwise around $F_r$,
    \item $-1$ if edge $e_c$ is an edge on the face cycle of  $F_r$ and points counterclockwise around $F_r$. 
\end{itemize}

\begin{remark}
    MacLane showed that each column of $M$ contains at most two nonzero entries~\cite{MacLane.1937}. 
\end{remark}

\begin{example}
\label{example.M}
Consider the planar graph on the left below.
\begin{center}
\begin{tikzpicture}[scale=0.5]
\draw[thick] (0,0) rectangle (2,2);
\draw[thick] (2,0) rectangle (4,2);
\draw[thick] (4,2) to (6,2);
\node at (-0.6,1) {$e_1$};
\node at (1.2,2.3){$e_2$};
\node at (1.2,-0.5){$e_3$};
\node at (2.5,1){$e_4$};
\node at (3.2,2.3){$e_5$};
\node at (3.2,-0.5){$e_6$};
\node at (4.5,1){$e_7$};
\node at (5.2,2.3){$e_8$};
\node at (0,)[anchor=south east]{};
\node at (0,0){$\bullet$};
\node at (0,2){$\bullet$};
\node at (2,0){$\bullet$};
\node at (4,0){$\bullet$};
\node at (2,2){$\bullet$};
\node at (4,2){$\bullet$};
\node at (6,2){$\bullet$};
\end{tikzpicture} \hspace{1in} \raisebox{0.35in}{$M=
	\begin{pmatrix}
	1&1&1&1&0&0&0&0\\
	0&0&0&-1&1&1&1&0
	\end{pmatrix}$}
\end{center}
This graph has two faces $F_1$ and $F_2$ with face cycles $e_1 e_2 e_4 e_3$ and $e_4 e_5 e_7 e_6$, respectively. Directing
the edges $e_1,e_2,e_4,e_3$ clockwise around the face $F_1$ and the edges $e_5,e_7,e_6$ clockwise around $F_2$, the cycle basis matrix is given by $M$ above.
\end{example}

\begin{lemma}
\label{lemma.full rank}
Let $G=(V,E)$ be a finite, planar graph with directed edges. Let $M$ be the corresponding cycle basis matrix. Then the rows of $M$ are linearly 
independent, that is, $\mathsf{rank}(M) = f_G$, where $f_G$ is the number of faces in $G$.
\end{lemma}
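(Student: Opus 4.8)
The plan is to identify each row of $M$ with a vector in the edge space $\mathbb{R}^{e_G}$ and to show that these $f_G$ vectors form a basis of the cycle space $Z(G)$, the $\mathbb{R}$-subspace of $\mathbb{R}^{e_G}$ spanned by all signed cycle vectors of $G$. Since a basis is in particular linearly independent, this yields $\mathsf{rank}(M) = f_G$. Two facts drive the argument: a dimension count showing $\dim_{\mathbb{R}} Z(G) = f_G$, and a spanning statement showing the face cycles generate $Z(G)$. A spanning set whose cardinality equals the dimension is automatically a basis, so establishing both suffices.

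First I would compute $\dim Z(G)$. For a connected graph the cycle space has dimension equal to the circuit rank $e_G - v_G + 1$, where $v_G = |V|$ and $e_G = |E|$; this is immediate from the fact that the complement of a spanning tree consists of $e_G - (v_G - 1)$ edges, each of which closes a unique fundamental cycle. On the other hand, Euler's formula for a connected planar graph reads $v_G - e_G + (f_G + 1) = 2$, where the summand $+1$ accounts for the unbounded face. Rearranging gives $f_G = e_G - v_G + 1 = \dim Z(G)$. Pendant edges and bridges do not disturb this: each such edge raises $e_G$ and $v_G$ in tandem and lies on no cycle, so it contributes a zero column to $M$ and nothing to either side of the count.

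Next I would argue that the $f_G$ rows of $M$ span $Z(G)$. Each row is the signed incidence vector of a bounded face cycle, hence lies in $Z(G)$. Conversely, by the Jordan curve theorem every cycle $C$ in the planar embedding bounds a region that is a union of bounded faces, and $C$ is exactly the signed sum of the boundaries of those enclosed faces, because each interior edge is shared by two enclosed faces and appears with opposite orientations, so it cancels. Since the cycle vectors span $Z(G)$ by definition, the bounded face cycles span $Z(G)$ as well. Combined with the equality of dimensions, the rows form a basis and are therefore linearly independent.

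The main obstacle is making the planar-topology step precise for graphs that are not $2$-connected, namely those with cut vertices, bridges, or pendant edges, where the phrases ``region enclosed by a cycle'' and ``shared edge'' require care. An alternative that sidesteps the spanning lemma is a direct argument on the support of a dependence. Suppose $\sum_r \lambda_r R_r = 0$ for the rows $R_r$ of $M$, and let $T = \{r : \lambda_r \neq 0\}$ be nonempty. The union $\bigcup_{r \in T} F_r$ is a bounded region whose topological boundary contains an edge $e$; this edge borders some face $F_r$ with $r \in T$ on one side and a face outside $T$, possibly the unbounded face, on the other. Thus among the faces in $T$ only $F_r$ carries $e$, so the $e$-entry of $\sum_r \lambda_r R_r$ equals $\pm \lambda_r \neq 0$, contradicting the vanishing of the sum. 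Hence $T = \varnothing$ and the rows are independent. Either route delivers $\mathsf{rank}(M) = f_G$.
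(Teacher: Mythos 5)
Your argument is correct, but it takes a genuinely different route from the paper. The paper proves the lemma by induction on the bounded faces: it adds faces one at a time, observes that each new face contributes at least one edge not lying on any previously added face, and uses that edge as a pivot column to see the rank increase by one at each step. You instead offer two non-inductive arguments: first, a dimension count identifying the rows with a spanning set of the cycle space $Z(G)$, whose dimension $e_G - v_G + 1$ equals $f_G$ by Euler's formula, so that $f_G$ spanning vectors are automatically a basis; and second, a direct support argument in which a putative dependence $\sum_r \lambda_r R_r = 0$ is refuted by locating an edge on the outer boundary of the union of the supported faces, which lies on exactly one face cycle from the support and hence contributes a nonzero entry. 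The second argument is the cleaner of your two and is self-contained; it is essentially the ``each column has at most two nonzero entries, and a boundary edge has only one in the support'' idea, which the paper's induction also exploits but packages differently. Two small points: the lemma does not assume $G$ is connected, so in your first route Euler's formula and the circuit-rank computation should be applied component by component (the paper notes this explicitly); your second route handles disconnectedness with no change. You are also right to flag that the Jordan-curve spanning step needs care for graphs with bridges and cut vertices --- your alternative dependence-support argument sidesteps exactly that issue, and I would lead with it.
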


\begin{proof}
If $G$ does not have any faces, the cycle basis matrix $M$ is empty and indeed $\mathsf{rank}(M) = f_G=0$. If $G$ has faces, let us inductively
add the faces one by one. We can do so independently for each connected component of $G$. Adding a new face, adds at least one new edge to 
the graph that has not been in any other previous face. This adds a new column to $M$ (corresponding to the new edge) with a pivot in the newly added 
row (the places with the $\pm 1$'s in this new row correspond to the edges in the new face). This proves the rank goes up by one. Inductively, this proves 
$\mathsf{rank}(M) = f_G$.
\end{proof}

\subsection{Constructing the extended cycle basis matrix}

For a finite, planar, directed, edge-labelled graph $(G,\ell)$, we are now going to define an \defn{extended cycle basis matrix}.

 We are interested in degree 2 splines.
As shown in Section~\ref{section.homogeneous}, we may assume without loss of generality that each edge label $\ell(uv)$ has the form 
$\ell(uv) = (x+a_{uv} y)^2$, also called a homogeneous edge-labeling. The difference of the vertex labels
of two vertices $u$ and $v$ connected by an edge $uv$ has to be a multiple of the edge label $\ell(uv)$, so each clockwise cycle $v_1 v_2 \ldots v_k v_1$ 
in the directed, edge-labeled graph $(G, \ell)$ gives rise to a linear equation on the collection of splines, as follows.  
Suppose the label on edge $v_iv_{i+1}$ is $\ell_i$ and $\varepsilon_i=1$ if the edge is directed from $v_i$ to $v_{i+1}$ and $-1$ otherwise, 
where the index $i+1$ is taken mod $k$.  Each degree 2 spline on these vertices can be written
\[
	(z, z+z_1 \varepsilon_1 \ell_1, z+z_1 \varepsilon_1 \ell_1 + z_2 \varepsilon_2 \ell_2, \ldots, z+z_1 \varepsilon_1 \ell_1 + z_2 \varepsilon_2 \ell_2 
	+ \cdots + z_{k-1} \varepsilon_{k-1} \ell_{k-1})
\]
for some choice of scalars $z, z_i \in  R = \mathbb{C}[x,y]$ so long as there is a scalar $z_k$ satisfying
\begin{equation} 
\label{equation.cycle}
	z_1 \varepsilon_1 \ell_1 + z_2 \varepsilon_2 \ell_2 + \cdots + z_k \varepsilon_k \ell_k = 0.
\end{equation}

Assuming that $\ell_i = (x+a_iy)^2$, the condition~\eqref{equation.cycle} gives rise to three linear equations for the variables $z_1,z_2,\ldots,z_k$
by considering the coefficients of the monomials $x^2,xy,y^2$. This is captured in the $3 \times k$ matrix
\begin{equation}
\label{equation.extended}
	\left(\begin{array}{ccccc} 
	\varepsilon_1 & \varepsilon_2 & \varepsilon_3 & \ldots & \varepsilon_k \\
	\varepsilon_1 a_1 & \varepsilon_2 a_2 & \varepsilon_3 a_3 & \ldots & \varepsilon_k a_k \\
	\varepsilon_1 a_1^2 & \varepsilon_2 a_2^2 & \varepsilon_3 a_3^2 & \ldots & \varepsilon_k a_k^2 \\
	\end{array} \right).
\end{equation}

\begin{definition}
\label{definition.extended cycle basis matrix}
The \defn{extended cycle basis matrix} $M^{\mathsf{ext}}$ of a labelled, finite, directed, planar graph $G$ is obtained from the cycle basis matrix 
$M$ by replacing the support of each row with three rows as in~\eqref{equation.extended} with the same support. Hence $M^{\mathsf{ext}}$ 
is a $3f_G \times e_G$ matrix, where $f_G$ is the number of bounded faces in $G$ and $e_G$ is the number of edges in $G$.
\end{definition}

\begin{example}
\label{example.Mext}
The extended cycle basis matrix for Example~\ref{example.M} is
\[
	M^{\mathsf{ext}}=
	\begin{pmatrix}
	1&1&1&1&0&0&0&0\\
	a_1&a_2& a_3 & a_4 &0&0&0&0\\
	a_1^2&a_2^2& a_3^2 & a_4^2 &0&0&0&0\\
	0&0&0&-1&1&1&1&0\\
	0&0&0&-a_4&a_5&a_6&a_7&0\\
	0&0&0&-a_4^2&a_5^2&a_6^2&a_7^2&0
	\end{pmatrix}.
\]
\end{example}

\begin{theorem}
\label{theorem.spline rank}
For a labelled, directed, finite, planar graph $(G,\ell)$, we have
\[
	\dim \mathsf{Spl}_2(G,\ell;v_0) = e_G - \rank(M^{\mathsf{ext}}).
\]
\end{theorem}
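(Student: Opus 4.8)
The plan is to set up an explicit linear isomorphism between $\Spl_2(G,\ell;v_0)$ and the kernel of $M^{\mathsf{ext}}$, and then invoke rank–nullity. Throughout I assume $G$ connected (the standing hypothesis of this section) and each edge label homogeneous of the form $\ell(e) = (x+a_ey)^2$, as reduced to in Section~\ref{section.homogeneous}.

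First I would attach to every homogeneous degree-$2$ spline a scalar ``edge-jump'' vector. Since each vertex label $p(v)$ lies in $R_2$ (or is zero) and each $\ell(e)$ is homogeneous of degree $2$, a degree comparison shows that for a directed edge $e$ from $u$ to $v$ the difference $p(v)-p(u) \in R_2$, being a multiple of the degree-$2$ polynomial $(x+a_ey)^2$, must equal $z_e(x+a_ey)^2$ for a \emph{unique scalar} $z_e \in \mathbb{C}$ (the cofactor has degree $0$). This defines a linear map $\Phi \colon \Spl_2(G,\ell) \to \mathbb{C}^{e_G}$, $p \mapsto (z_e)_{e\in E}$, whose kernel is exactly the constant splines $R_2\cdot 1$, because on a connected graph all edge differences vanish if and only if $p$ is constant.

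The heart of the argument is the identification $\im(\Phi) = \ker(M^{\mathsf{ext}})$. For the inclusion $\subseteq$, summing the jumps around a face cycle $F_r$ telescopes to zero, so $\sum_e M_{r,e}\, z_e (x+a_ey)^2 = 0$ in $R_2$; reading off the coefficients of $x^2$, $xy$, $y^2$ yields precisely the three rows of $M^{\mathsf{ext}}$ associated to $F_r$ annihilating $(z_e)$ (the factor $2$ appearing in the $xy$-coefficient merely rescales a row and does not change the kernel). For the reverse inclusion I would reconstruct a spline from a kernel vector: set $p(v_0)=0$ and define $p(v)$ by summing $\pm z_e(x+a_ey)^2$ along any path from $v_0$ to $v$, with sign according to edge orientation. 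Well-definedness amounts to the total jump vanishing around every cycle; since the jump-sum is a linear functional on the cycle space and the bounded face cycles form a cycle basis (as recorded before Lemma~\ref{lemma.full rank}), it suffices that it vanish on each face cycle, which is exactly the kernel condition. The resulting $p$ is a genuine spline with $\Phi(p)=(z_e)$ and $p(v_0)=0$.

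Finally, restricting $\Phi$ to $\Spl_2(G,\ell;v_0)=\{p : p(v_0)=0\}$ gives an isomorphism onto $\ker(M^{\mathsf{ext}})$: it is injective because its kernel consists of constant splines vanishing at $v_0$, hence is zero; and it is surjective by the reconstruction above, which produces representatives satisfying $p(v_0)=0$. Since $M^{\mathsf{ext}}$ has $e_G$ columns, rank–nullity gives $\dim\Spl_2(G,\ell;v_0) = \dim\ker(M^{\mathsf{ext}}) = e_G - \rank(M^{\mathsf{ext}})$. I expect the main obstacle to be the well-definedness of the reconstruction, that is, making rigorous that ``closed on every face cycle'' implies ``path-independent''; this is precisely where planarity enters, through the fact that the bounded face cycles form a basis of the cycle space. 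The homogeneity bookkeeping that forces the edge jumps to be scalars is the other point requiring care.
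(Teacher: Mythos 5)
Your proof is correct, but it takes a genuinely different route from the paper's. The paper argues by induction: it starts from a spanning tree $T$ of $G$, on which the degree-two splines vanishing at $v_0$ form an $e_T$-dimensional space (one free edge-jump scalar per edge), and then adds the remaining edges one at a time, each addition creating one new bounded face, contributing one new free scalar and subtracting the number of newly independent conditions of the form~\eqref{equation.extended}; completeness and independence of the face cycle basis then give the formula. You instead construct the global linear map $p \mapsto (z_e)_e$, identify its kernel with the constant splines and its image with $\ker M^{\mathsf{ext}}$, and finish with rank--nullity. This is essentially the content of the paper's later Lemma~\ref{lemma: homogeneous splines surject onto kernel} (your $\Phi$ is the paper's $\varphi_{\ell}$ from Definition~\ref{definition: map splines to vector space of edge-adjustments}), so in effect you have proved that lemma ahead of time and deduced the theorem from it. Your version makes explicit the two points the paper's terse induction leaves implicit: that degree considerations force the edge cofactors to be scalars, and that closedness on the face cycles implies path-independence of the reconstruction because the bounded face cycles form a basis of the cycle space (planarity enters exactly here, via Lemma~\ref{lemma.full rank} and MacLane's theorem); you also correctly note that the factor of $2$ in the $xy$-coefficient only rescales a row of $M^{\mathsf{ext}}$ and so does not affect the kernel. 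The one caveat, which the paper shares, is that the surjectivity of $\Phi$ onto all of $\ker M^{\mathsf{ext}}$ genuinely uses homogeneity of the edge labels (for non-homogeneous labels the image can be a proper subspace), so the theorem must be read in the homogenized setting of Section~\ref{section.homogeneous}; you state this assumption explicitly, which is the right thing to do.
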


\begin{proof}
Start with a spanning tree $T$ of the graph $G$ with the same labels and directions on the edges as in $G$. The dimension of the degree 2 splines associated 
to $T$ which are zero at $v_0$ is $e_T$. Inductively, add an edge $e\in E(G) \setminus E(T)$ to $T$ in such a way that a new bounded face of $G$ is added 
by adding $e$. The dimension of the spline changes by 1 through the new edge $e$ minus the number of linearly independent conditions of the 
form~\eqref{equation.extended}. Since the 
face cycle basis is complete and independent, the set of linear conditions is captured by $M^{\mathsf{ext}}$. This proves the claim of the proposition.
\end{proof}

\begin{example}
The rank of $M^{\mathsf{ext}}$ in Example~\ref{example.Mext} is 6 if all $a_i$ are nonzero and distinct. Hence $\dim \mathsf{Spl}_2(G,\ell; v_0)=8-6=2$.
\end{example}

\begin{remark}
If edge $e_c$ is not on any bounded face $F \in \mathcal{F}$ then column $c$ of the cycle basis matrix $M$ contains only zero entries. So the rank of the extended cycle basis matrix $M^{\mathsf{ext}}$ only
depends on edges that are part of a cycle in the graph. Hence we assume from now on that the graph $G$ does not have any leaves (that is,
vertices with at most one edge as the vertex as an endpoint of the edge).
\end{remark}

Theorem~\ref{theorem.spline rank} reduces the computation of the dimension of the splines to computing the rank of the extended cycle basis matrix
$M^{\mathsf{ext}}$. Determining splines as the kernel of the extended cycle basis matrix corresponds to the method of smoothing cofactors
introduced by Wang~\cite{CW.1983, Wang.1975} and used by Schumaker~\cite{Schumaker.1979}, Whiteley~\cite{Wh.1991,Wh.1991a},
and Alfeld, Schumaker and Whiteley~\cite{ASW.1993}.

\section{Homogenizing splines of degree 2 by ignoring constant terms}
\label{section.homogeneous}

In this section, we describe a process of homogenizing splines that greatly simplifies rank calculations: namely, for each edge-label, we omit all terms 
of degree less than two. This is \emph{different} from the typical homogenization in algebraic geometry and commutative algebra, as detailed in 
Remark~\ref{remark: homogenization contrast}. Our goal in this section is to show that in the cases that interest analysts, the rank of the homogenized 
spline system $\Spl_2(G,\ell_0)$ is the same as the rank of the original spline system $\Spl_2(G,\ell)$. In particular, our main theorem generalizes the 
following.

\begin{theorem}
Suppose that $\Delta$ is a planar triangulation with no horizontal edges.  Write the associated smoothness-two edge-labeled dual graph by 
$(G_{\Delta},\ell_{\Delta})$, so $\ell_{\Delta}(uv) = (x+a_{uv}y+b_{uv})^2$ for each edge $uv$.  Let $\ell_0$ be the homogeneous edge-labeling 
defined by $\ell_0(uv)=(x+a_{uv}y)^2$ for each edge $uv$.  
Then
\[
	\dim \Spl_2(G_{\Delta},\ell_{\Delta}) = \dim \Spl_2(G_{\Delta},\ell_0) = \dim \mathcal{S}^1_2(\Delta)-6.
\]
\end{theorem}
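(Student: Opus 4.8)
The plan is to reduce both spline spaces to kernels of explicit scalar linear systems and then to show, using the geometry of the triangulation, that the extra equations appearing in the inhomogeneous system are redundant. First I would record the basic degree observation: since every edge label has degree exactly two and every vertex label has degree at most two, for each edge $uv$ the difference $p(u)-p(v)$ is a constant multiple $z_{uv}\,\ell(uv)$ with $z_{uv}\in\mathbb{C}$. Fixing $p(v_0)=0$, a spline is therefore equivalent to an assignment of scalars $z_{uv}$ to the edges subject to the cycle relation $\sum_i z_i\varepsilon_i\ell_i=0$ along each cycle; because the face cycles form a cycle basis (as in the discussion preceding Lemma~\ref{lemma.full rank}), it suffices to impose this relation on each bounded face of $G_\Delta$.

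Second I would expand the cycle relation in monomials for both labelings. For $\ell_0$ the relation $\sum_i z_i\varepsilon_i(x+a_iy)^2=0$ splits into the three equations obtained from the coefficients of $x^2,xy,y^2$, namely the rows of $M^{\mathsf{ext}}$. For $\ell_\Delta$ the relation $\sum_i z_i\varepsilon_i(x+a_iy+b_i)^2=0$ splits into six equations, coming from $x^2,xy,y^2,x,y,1$; the first three are identical to the homogeneous equations, while the last three read $\sum_i z_i\varepsilon_i b_i=0$, $\sum_i z_i\varepsilon_i a_i b_i=0$, and $\sum_i z_i\varepsilon_i b_i^2=0$. Thus the inhomogeneous solution set is cut out by the homogeneous equations together with these three additional equations, and the whole theorem comes down to showing the three additional equations are consequences of the homogeneous ones.

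The key step is the geometric input from the duality. A bounded face of $G_\Delta$ is dual to an interior vertex $w=(x_w,y_w)$ of $\Delta$, and the edges of $G_\Delta$ bounding that face are dual to the edges of $\Delta$ incident to $w$; every such edge of $\Delta$ lies on a line through $w$. Hence for every edge $i$ in this face cycle the line $x+a_iy+b_i=0$ passes through $w$, which gives the single relation $b_i=-(x_w+a_iy_w)$ (the hypothesis that $\Delta$ has no horizontal edges is exactly what lets every edge line be written in the normalized form $x+a_iy+b_i=0$). Substituting this relation and using the three homogeneous equations $\sum_i z_i\varepsilon_i=\sum_i z_i\varepsilon_i a_i=\sum_i z_i\varepsilon_i a_i^2=0$ then makes each of the three extra equations collapse to $0$; for instance $\sum_i z_i\varepsilon_i b_i=-x_w\sum_i z_i\varepsilon_i-y_w\sum_i z_i\varepsilon_i a_i=0$, and similarly for the other two after expanding $a_ib_i$ and $b_i^2$. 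Therefore the two systems have identical solution sets, so $\dim\Spl_2(G_\Delta,\ell_\Delta)=\dim\Spl_2(G_\Delta,\ell_0)$. The remaining equality $\dim\Spl_2(G_\Delta,\ell_\Delta)=\dim\mathcal{S}^1_2(\Delta)-6$ is the Billera--Rose duality already recorded in the introduction (the isomorphism $\Spl_2(G_\Delta,\ell)\cong S^1_2(\Delta)-6$), applied to the original labeling.

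I expect the main obstacle to be the careful bookkeeping of the duality: verifying that the edges bounding each bounded face of $G_\Delta$ are precisely the edges of $\Delta$ through a single interior vertex, and that the concurrency relation $b_i=-(x_w+a_iy_w)$ holds simultaneously for all of them with one common point $w$ per face. Once this concurrency is in hand, the algebraic collapse of the three extra equations is a short computation. A secondary point to handle cleanly is confirming that the scalar-multiplier reduction and the face-cycle parametrization identify each spline space with the kernel of the corresponding scalar system, so that equality of solution sets immediately yields equality of dimensions.
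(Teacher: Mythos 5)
Your proposal is correct and is essentially the paper's own argument: the heart of both is the observation that the concurrency of the edge-lines at the interior vertex dual to each face (i.e.\ $b_i=-(x_w+a_iy_w)$, the paper's ``face-translatability'') lets the three extra equations from the coefficients of $x,y,1$ be written as linear combinations of the three Vandermonde equations from $x^2,xy,y^2$, so the inhomogeneous and homogeneous systems cut out the same kernel. The only difference is organizational: the paper packages the reduction to scalar edge-multipliers as the map $\varphi_{\ell}$ and globalizes from a single face cycle to all of $G_\Delta$ by induction on the number of faces, whereas you globalize in one step by noting that the face cycles generate the cycle space so it suffices to impose the relation face by face --- an equivalent and, if anything, slightly cleaner bookkeeping.
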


In the rest of this section, we 1) define homogeneous splines, 2) construct a vector space isomorphic to homogenized splines $\Spl_2(G,\ell_0)$, 3) 
show that the original system $\Spl_2(G,\ell)$ injects into this vector space, then 4) give increasingly general conditions under which $\Spl_2(G,\ell)$ 
actually is isomorphic to $\Spl_2(G,\ell_0)$. Our results extend earlier work by Whiteley~\cite{Wh.1991,Wh.1991a}, not to mention undergraduate linear 
algebra.

\begin{definition}
Let $(G,\ell_0)$ be an edge-labeled, directed graph for which $\ell_0(uv) = (x+a_{uv}y)^2$ for each edge $uv$.  Then $\ell_0$ is called a 
\defn{homogeneous} edge-labeling. Let $b \colon E \rightarrow \mathbb{C}$ be a function whose image at each edge $uv$ is denoted $b_{uv}$ 
and suppose $\ell'$ is another edge-labeling of $G$ with
\[
	\ell'(uv)=(x+a_{uv}y+b_{uv})^2.
\]
Then we call $\ell'$ a \defn{non-homogeneous} labeling.  We refer to the process of passing from $\ell'$ to $\ell_0$ as \defn{homogenizing} the labeling.  
Moreover, we say that $\Spl_2(G,\ell')$ are a \defn{particular system of splines}, that $\Spl_2(G,\ell_0)$ are the \defn{homogeneous splines} on $G$ 
associated to $\ell'$.
\end{definition}

\begin{remark} \label{remark: homogenization contrast}
This is analogous to homogeneous versus particular linear systems of equations from linear algebra.  Another kind of homogenizing creates polynomials whose terms all have the same degree by adding a new variable.  This is common in algebraic geometry and can be found in other work on splines (e.g. \cite{Billera.1988, AMT.2021} but is different from what we do here.  For instance, an edge labeled $(x+y+3)^2$ is homogenized in our sense (and the interpretation from linear algebra) to $(x+y)^2$ but would be homogenized to $(x+y+3z)^2$ in the interpretation from algebraic geometry.  

When an edge-labeled graph is dual to a planar triangulation, its edge labels are equations for the lines each edge lies on, and our homogenization translates all edges simultaneously to go through the origin.  We do not have a meaningful way to interpret this as a geometric operation on the original planar triangulation; see also Section~\ref{section: open questions}.
\end{remark}

\begin{example} \label{example: starting homogeneous and particular}
Figure~\ref{figure: homogeneous and particular example} shows a graph $G$ in red with edges labeled by the linear forms $x+a_{uv}y+b_{uv}$ for the non-homogenous edge-labeling $\ell'(uv) = (x+a_{uv}y+b_{uv})^2$.  

On the right, a black version of the graph is labeled by the linear forms $x+a_{uv}y$ for the associated homogeneous edge-labeling with $\ell_0(uv)=(x+a_{uv}y)^2$.  (The red graph $G$ is dual as a planar graph to the triangulation $\Delta$ drawn around it in black, whose vertices are $(0,1), (0,-1),(1,0)$ and $(2,3), (-2,0), (2,-3)$. The non-homogeneous edge-labeling gives the equations of the lines that span the interior edges of $\Delta$.) 
\begin{figure}[h]
\scalebox{0.5}{\begin{tikzpicture}
    \draw[thick] (0,-1) -- (0,1);
    \draw[thick] (0,1) -- (1,0);
    \draw[thick] (1,0) -- (0,-1);
    \draw[thick] (-2,0) -- (2,3);
    \draw[thick] (-2,0) -- (0,1);
    \draw[thick] (-2,0) -- (0,-1);
    \draw[thick] (-2,0) -- (2,-3);
    \draw[thick] (2,3) -- (0,1);
    \draw[thick] (2,3) -- (1,0);
    \draw[thick] (2,-3) -- (0,-1);
    \draw[thick] (2,-3) -- (1,0);

    \filldraw[red] (0.4,0) circle (3pt);
    \filldraw[red] (-1,0) circle (3pt);
    \filldraw[red] (1.3,1.8) circle (3pt);
    \filldraw[red] (1.3,-1.8) circle (3pt);
    \filldraw[red] (-0.1,1.2) circle (3pt);
    \filldraw[red] (-0.1,-1.2) circle (3pt);
    \draw[thick, red] (0.4,0) -- (1.3,1.8);
    \draw[thick, red] (0.4,0) -- (1.3,-1.8);
    \draw[thick, red] (0.4,0) -- (-1,0);
    \draw[thick, red] (-0.1,-1.2) -- (1.3,-1.8);
    \draw[thick, red] (-0.1,-1.2) -- (-1,0);
  \draw[thick, red] (-0.1,1.2) -- (1.3,1.8);
    \draw[thick, red] (-0.1,1.2) -- (-1,0);

    \draw[red] (1.4,1) node[anchor=west] {\scalebox{2}{$x+y-1$}};
    \draw[red] (1.4,-1) node[anchor=west] {\scalebox{2}{$x-y-1$}};
    \draw[red] (-1,1.8) node[anchor=south] {\scalebox{2}{$x-y+1$}};
    \draw[red] (-1,-1.8) node[anchor=north] {\scalebox{2}{$x+y+1$}};
    \draw[red] (-1,1) node[anchor=east] {\scalebox{2}{$x-2y+2$}};
    \draw[red] (-1,-1) node[anchor=east] {\scalebox{2}{$x+2y+2$}};
    \draw[red] (-.3,0) node[anchor=south] {\scalebox{2}{$x$}};
\end{tikzpicture}} \hspace{0.5in}
\scalebox{0.5}{\begin{tikzpicture}
        \filldraw (0.4,0) circle (3pt);
    \filldraw (-1,0) circle (3pt);
    \filldraw (1.3,1.8) circle (3pt);
    \filldraw (1.3,-1.8) circle (3pt);
    \filldraw (-0.1,1.2) circle (3pt);
    \filldraw (-0.1,-1.2) circle (3pt);
    \draw[thick] (0.4,0) -- (1.3,1.8);
    \draw[thick] (0.4,0) -- (1.3,-1.8);
    \draw[thick] (0.4,0) -- (-1,0);
    \draw[thick] (-0.1,-1.2) -- (1.3,-1.8);
    \draw[thick] (-0.1,-1.2) -- (-1,0);
  \draw[thick] (-0.1,1.2) -- (1.3,1.8);
    \draw[thick] (-0.1,1.2) -- (-1,0);

    \draw (1,0.8) node[anchor=west] {\scalebox{2}{$x+y$}};
    \draw (1,-0.8) node[anchor=west] {\scalebox{2}{$x-y$}};
    \draw (0,1.7) node[anchor=south] {\scalebox{2}{$x-y$}};
    \draw (0,-1.7) node[anchor=north] {\scalebox{2}{$x+y$}};
    \draw (-0.7,0.7) node[anchor=east] {\scalebox{2}{$x-2y$}};
    \draw (-0.7,-0.7) node[anchor=east] {\scalebox{2}{$x+2y$}};
    \draw (-.3,0) node[anchor=south] {\scalebox{2}{$x$}};
\end{tikzpicture}}
\caption{A triangulation describing a particular system of splines, the corresponding (non-homogeneous) edge-labeling of its dual graph, and the associated homogeneous edge-labeled graph. \label{figure: homogeneous and particular example}}
\end{figure}
\end{example}

The next definition is critical to our argument.  It identifies a vector space that is isomorphic to the homogeneous splines, as well as to the kernel of $M^\mathsf{ext}$, and that in general contains any particular set of splines with that same homogenization.  It relies on the choice of a direction for each edge of the graph $G$.

\begin{definition} \label{definition: map splines to vector space of edge-adjustments}
Suppose that $(G, \ell)$ is a directed, edge-labeled graph. Define a map $\varphi_{\ell} \colon \Spl(G, \ell) \longrightarrow \mathbb{C}[x,y]^{|E(G)|}$ 
as follows.  For each spline $p \in \Spl(G,\ell)$ and each directed edge $u \mapsto v$ let
\[
	\varphi_{\ell}(p)(u \mapsto v) = c_{uv} \hspace{2em} \textup{ if and only if } \hspace{2em} p(v)-p(u) = c_{uv} \ell_{uv}.
\]
\end{definition}

Note that if we consider splines as a $\mathbb{C}[x,y]$-module, then $\varphi_{\ell}$ is a $\mathbb{C}[x,y]$-module map.  

\begin{remark}\label{remark.restricted}
When we restrict the map $\varphi_{\ell}$ to $\Spl_2(G,\ell)$ and $\ell$ is an edge-labeling by degree-two polynomials, then in fact $c_{uv} \in \mathbb{C}$ 
for all edges $uv$ and 
\[
	\varphi_{\ell} \colon \Spl_2(G,\ell) \rightarrow \mathbb{C}^{|E(G)|}
\]
is a linear map of $\mathbb{C}$-vector spaces. We often write $\vec{c}$ for a vector in the image of $\varphi_{\ell}$.
\end{remark}

\begin{example}\label{example: continuing homogeneous and nonhom example}
Continuing Example~\ref{example: starting homogeneous and particular}, Figure~\ref{figure: the constants in homogeneous and particular example} shows a spline in the particular system of splines associated to the non-homogeneous labeling $\ell'$ as well as a spline in the homogeneous system associated to $\ell_0$.  In the center of Figure~\ref{figure: the constants in homogeneous and particular example} are the scalars associated to each spline by the maps $\varphi_{\ell'}$ and $\varphi_{\ell_0}$ respectively.  The main goal of this section is Lemma~\ref{lemma: facetranslateable means nonhom splines are iso to hom}, which gives a condition under which non-homogeneous splines are isomorphic to homogeneous splines.  This condition is satisfied by every edge-labeled graph that is dual to a triangulation (and indeed to more general plane partitions).
\begin{figure}[h]
\scalebox{0.5}{\begin{tikzpicture}
    \filldraw[red] (0.4,0) circle (2pt) node[anchor=west, blue]{\scalebox{1.5}{$-3x^2$}};
    \filldraw[red] (-1,0) circle (2pt) node[anchor=east, blue]{\scalebox{1.5}{$0$}};
    \filldraw[red] (1.3,1.8) circle (2pt) node[anchor=south west, blue]{\scalebox{1.5}{$-2x^2+2xy+y^2$}};
        \draw[blue] (1.7,1.2) node[anchor=south west, blue]{\scalebox{1.5}{$-2x-2y+1$}};
    \filldraw[red] (1.3,-1.8) circle (2pt) node[anchor=north west, blue]{\scalebox{1.5}{$-2x+2y+1$}};
         \draw[blue] (1.3,-1.2) node[anchor=north west, blue]{\scalebox{1.5}{$-2x^2-2xy+y^2$}};
    \filldraw[red] (-0.1,1.2) circle (2pt) node[anchor=south east, blue]{\scalebox{1.5}{$x^2-4xy+4y^2$}};
    \draw[blue] (-0.5,0.6) node[anchor=south east, blue]{\scalebox{1.5}{$+4x-8y+4$}};
    \filldraw[red] (-0.1,-1.2) circle (2pt) node[anchor=north east, blue]{\scalebox{1.5}{$+4x+8y+4$}};
        \draw[blue] (-0.5,-0.6) node[anchor=north east, blue]{\scalebox{1.5}{$x^2+4xy+4y^2$}};

    \draw[thick, red] (0.4,0) -- (1.3,1.8);
    \draw[thick, red] (0.4,0) -- (1.3,-1.8);
    \draw[thick, red] (0.4,0) -- (-1,0);
    \draw[thick, red] (-0.1,-1.2) -- (1.3,-1.8);
    \draw[thick, red] (-0.1,-1.2) -- (-1,0);
  \draw[thick, red] (-0.1,1.2) -- (1.3,1.8);
    \draw[thick, red] (-0.1,1.2) -- (-1,0);

    \draw[red, <-, very thick] (0.7,0.6) -- (1.3,1.8);
\draw[red, <-, very thick] (0.7,-0.6) -- (1.3,-1.8);
\draw[red, ->, very thick] (-1,0) -- (0.1,0);
    \draw[red, ->, very thick] (-0.1,-1.2) -- (0.6,-1.5);
    \draw[red, <-, very thick] (-0.4,-0.8) -- (-1,0);
        \draw[red, ->, very thick] (-0.1,1.2) -- (0.6,1.5);
    \draw[red, <-, very thick] (-0.4,0.8) -- (-1,0);

\end{tikzpicture}}  \hspace{0.5in}
\scalebox{0.5}{\begin{tikzpicture}
        \filldraw (0.4,0) circle (2pt);
    \filldraw (-1,0) circle (2pt);
    \filldraw (1.3,1.8) circle (2pt);
    \filldraw (1.3,-1.8) circle (2pt);
    \filldraw (-0.1,1.2) circle (2pt);
    \filldraw (-0.1,-1.2) circle (2pt);
    \draw[thick] (0.4,0) -- (1.3,1.8);
    \draw[thick] (0.4,0) -- (1.3,-1.8);
    \draw[thick] (0.4,0) -- (-1,0);
    \draw[thick] (-0.1,-1.2) -- (1.3,-1.8);
    \draw[thick] (-0.1,-1.2) -- (-1,0);
  \draw[thick] (-0.1,1.2) -- (1.3,1.8);
    \draw[thick] (-0.1,1.2) -- (-1,0);

\draw[<-, very thick] (0.7,0.6) -- (1.3,1.8);
\draw[<-, very thick] (0.7,-0.6) -- (1.3,-1.8);
\draw[->, very thick] (-1,0) -- (0.1,0);
    \draw[->, very thick] (-0.1,-1.2) -- (0.6,-1.5);
    \draw[<-, very thick] (-0.4,-0.8) -- (-1,0);
        \draw[->, very thick] (-0.1,1.2) -- (0.6,1.5);
    \draw[<-, very thick] (-0.4,0.8) -- (-1,0);

    \draw (1,0.8) node[anchor=west] {\scalebox{1.5}{$-1$}};
    \draw (1,-0.8) node[anchor=west] {\scalebox{1.5}{$-1$}};
    \draw (0,1.7) node[anchor=south] {\scalebox{1.5}{$-3$}};
    \draw (0,-1.7) node[anchor=north] {\scalebox{1.5}{$-3$}};
    \draw (-0.7,0.7) node[anchor=east] {\scalebox{1.5}{$1$}};
    \draw (-0.7,-0.7) node[anchor=east] {\scalebox{1.5}{$1$}};
    \draw (-.3,0) node[anchor=south] {\scalebox{1.5}{$1$}};
\end{tikzpicture}} \hspace{0.35in}
\scalebox{0.5}{\begin{tikzpicture}
        \filldraw (0.4,0) circle (2pt) node[anchor=west, blue]{\scalebox{1.5}{$-3x^2$}};
    \filldraw (-1,0) circle (2pt) node[anchor=east, blue]{\scalebox{1.5}{$0$}};
    \filldraw (1.3,1.8) circle (2pt) node[anchor=south west, blue]{\scalebox{1.5}{$-2x^2+2xy+y^2$}};
    \filldraw (1.3,-1.8) circle (2pt) node[anchor=north west, blue]{\scalebox{1.5}{$-2x^2-2xy+y^2$}};
    \filldraw (-0.1,1.2) circle (2pt) node[anchor=south east, blue]{\scalebox{1.5}{$x^2-4xy+4y^2$}};
    \filldraw (-0.1,-1.2) circle (2pt) node[anchor=north east, blue]{\scalebox{1.5}{$x^2+4xy+4y^2$}};
    \draw[thick] (0.4,0) -- (1.3,1.8);
    \draw[thick] (0.4,0) -- (1.3,-1.8);
    \draw[thick] (0.4,0) -- (-1,0);
    \draw[thick] (-0.1,-1.2) -- (1.3,-1.8);
    \draw[thick] (-0.1,-1.2) -- (-1,0);
  \draw[thick] (-0.1,1.2) -- (1.3,1.8);
    \draw[thick] (-0.1,1.2) -- (-1,0);

\draw[<-, very thick] (0.7,0.6) -- (1.3,1.8);
\draw[<-, very thick] (0.7,-0.6) -- (1.3,-1.8);
\draw[->, very thick] (-1,0) -- (0.1,0);
    \draw[->, very thick] (-0.1,-1.2) -- (0.6,-1.5);
    \draw[<-, very thick] (-0.4,-0.8) -- (-1,0);
        \draw[->, very thick] (-0.1,1.2) -- (0.6,1.5);
    \draw[<-, very thick] (-0.4,0.8) -- (-1,0);

\end{tikzpicture}}
\caption{The maps $\varphi_{\ell'}$ and $\varphi_{\ell_0}$ send the splines on the left and right, respectively, to the scalars labeling edges in the center
as referred to in Example~\ref{example: continuing homogeneous and nonhom example.}
\label{figure: the constants in homogeneous and particular example}
}
\end{figure}
\end{example}

\begin{lemma} 
\label{lemma: varphi for trees}
Suppose that $T$ is a tree rooted at $v_0$ whose edges are all directed and that $\alpha$ is any edge-labeling by complex polynomials. Then the kernel 
of $\varphi_{\alpha}$ consists of the constant splines and the image is $\mathbb{C}[x,y]^{n-1}$, where $n$ is the number of vertices of $T$.

In particular, if $\ell$ and $\ell'$ are any edge-labelings of $T$ by polynomials of degree two, then the map
\[ 
	\varphi_{\ell}^{-1} \circ \varphi_{\ell'} \colon \Spl(T, \ell') \rightarrow \Spl(T,\ell)
\]
is an isomorphism of splines that are zero at $v_0$.  Moreover, this restricts to an isomorphism on the vector spaces 
$\Spl_2(T,\ell') \rightarrow \Spl_2(T,\ell)$ of splines of degree $2$, both of which are isomorphic under $\varphi_{\ell}$ to $\mathbb{C}^{n-1}$.
\end{lemma}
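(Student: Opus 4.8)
The plan is to show that on a tree the map $\varphi_\alpha$ is as close to a bijection as possible: its kernel is exactly the constant splines and its image is all of $\mathbb{C}[x,y]^{n-1}$. The conceptual point driving everything is that a tree is acyclic, so the cycle constraint~\eqref{equation.cycle} that normally couples the edge-scalars is vacuous, and the $n-1$ scalars on the edges may be prescribed freely. I would start with the kernel. Since $R=\mathbb{C}[x,y]$ is an integral domain and (as throughout) each $\alpha_e\neq 0$, the scalar $c_{uv}$ in Definition~\ref{definition: map splines to vector space of edge-adjustments} is uniquely determined by $p(v)-p(u)=c_{uv}\alpha_{uv}$, so $\varphi_\alpha$ is genuinely well-defined. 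Then $\varphi_\alpha(p)=0$ forces $p(v)=p(u)$ across every edge of $T$, and since $T$ is connected this means $p$ is constant; conversely every constant spline maps to $0$. Hence $\ker\varphi_\alpha$ is exactly the constant splines.

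Next I would prove surjectivity by exhibiting an explicit inverse built from the tree structure. Fix the root $v_0$, take a target vector $(c_e)_{e\in E(T)}$, and for a vertex $v$ let $v_0=u_0,u_1,\dots,u_m=v$ be the unique path in $T$ from $v_0$ to $v$. Define
\[
  p(v)=\sum_{i=0}^{m-1}\varepsilon_{i}\,c_{u_iu_{i+1}}\,\alpha_{u_iu_{i+1}},
\]
where $\varepsilon_i=\pm1$ records whether the edge is traversed along or against its assigned direction. Uniqueness of the path (acyclicity of $T$) makes $p$ well-defined, gives $p(v_0)=0$, and a one-edge comparison along $T$ shows $p(v)-p(u)=c_{uv}\alpha_{uv}$ on each edge, so $p\in\Spl(T,\alpha)$ and $\varphi_\alpha(p)=(c_e)$. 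Thus $\varphi_\alpha$ is surjective with image $\mathbb{C}[x,y]^{n-1}$. Combining with the kernel computation, and using that the only constant spline vanishing at $v_0$ is the zero spline, the restriction of $\varphi_\alpha$ to splines that are zero at $v_0$ is an isomorphism onto $\mathbb{C}[x,y]^{n-1}$.

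Applying this to both $\ell$ and $\ell'$ yields two isomorphisms onto $\mathbb{C}[x,y]^{n-1}$, whose composite $\varphi_\ell^{-1}\circ\varphi_{\ell'}$ is the asserted isomorphism of splines zero at $v_0$. For the degree-two statement, when $\alpha$ is a labeling by degree-two polynomials the remark following Definition~\ref{definition: map splines to vector space of edge-adjustments} gives $c_{uv}\in\mathbb{C}$ for every $p\in\Spl_2(T,\alpha)$, so $\varphi_\alpha$ carries $\Spl_2(T,\alpha;v_0)$ into $\mathbb{C}^{n-1}$; conversely the path-sum of scalar multiples of the degree-two forms $\alpha_e$ has degree at most two, so the explicit inverse lands back in $\Spl_2(T,\alpha;v_0)$. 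Hence $\varphi_\alpha\colon\Spl_2(T,\alpha;v_0)\to\mathbb{C}^{n-1}$ is an isomorphism for $\alpha\in\{\ell,\ell'\}$, and $\varphi_\ell^{-1}\circ\varphi_{\ell'}$ restricts to the claimed isomorphism $\Spl_2(T,\ell';v_0)\to\Spl_2(T,\ell;v_0)$. I expect no serious obstacle here; the only care needed is the bookkeeping of the orientations $\varepsilon_i$ and the observation that well-definedness of the path-sum $p$ rests precisely on the absence of cycles in $T$.
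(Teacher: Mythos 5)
Your proof is correct and in substance the same as the paper's: both rest on the unique-path property of a tree, so that the $n-1$ edge scalars can be prescribed freely and the kernel is forced to be the constants by connectedness. The only difference is presentational --- the paper invokes the explicit flow-up basis for splines on trees from Gilbert--Tymoczko--Viel and checks that $\varphi_{\alpha}$ carries it to the standard basis of $\mathbb{C}[x,y]^{n-1}$, whereas you build the inverse directly as a signed path sum from the root; expanding your path sum in that basis shows the two computations coincide.
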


\begin{proof}
This follows directly from the explicit basis for splines on trees given in \cite{GTV.2016}.  Indeed, that result constructed a basis $\mathcal{B}_{v_0}$ 
for an edge-labeled tree $(T,\alpha)$ rooted at $v_0$ as follows.  Since $T$ is a tree, there is a unique path between the root $v_0$ and each vertex 
$u \neq v_0$ in $T$. For each vertex $v$, let $\mathcal{S}_v$ be the set of vertices whose path to $v_0$ contains $v$, namely all the descendants 
of $v$.  The set $\mathcal{S}_v$ is nonempty since the vertex $v$ is contained in $\mathcal{S}_v$.  Suppose that $vv'$ is the first edge in the path from 
$v$ to $v_0$, namely $v'$ is the parent of $v$. Then the basis spline $\mathbf{b}_v$ is defined to be 
\[
	\mathbf{b}_v(u) = \left\{ \begin{array}{ll} 0 & \textup{ if } u \not \in \mathcal{S}_v, \\
	\alpha(vv') & \textup{ if } u \in \mathcal{S}_v. \end{array} \right.
\]
The basis spline $\mathbf{b}_{v_0}$ corresponding to 
the root $v_0$ is the identity spline, namely $1$ at all vertices.

Note that $\varphi_{\alpha}(\mathbf{b}_{v_0})$ is zero since $\mathbf{b}_{v_0}$ is a constant spline.  Moreover if $v \neq v_0$ then 
$\varphi_{\alpha}(\mathbf{b}_v)(u) = \pm \delta_{uv}$ since $\mathbf{b}_v$ is constant on every edge except the unique edge $uv$ on the path 
from $v_0$ to $v$.  In that case $\varphi_{\alpha}(\mathbf{b}_v)(u)$ is $1$ if $u \mapsto v$ and $-1$ otherwise.  Thus the map $\varphi_{\alpha}$ 
is a $\mathbb{C}$-linear map that sends the subset of basis elements $\mathcal{B}_{v_0} \backslash \{\mathbf{b}_{v_0}\}$ bijectively onto a basis of 
$\mathbb{C}[x,y]^{n-1}$ and sends all (polynomial) multiples of $\mathbf{b}_{v_0}$ to zero.  This proves the first part of the claim.  

If $\ell$ is an edge-labeling by degree-two polynomials in $\mathbb{C}[x,y]$ of a tree, then the elements of $\Spl(T,\ell)$ have the form 
\[
	p_0\mathbf{b}_{v_0} + \sum_{v \neq v_0} p_v \mathbf{b}_v
\]
for polynomials $p_0, p_v \in \mathbb{C}[x,y]$. Of these terms, only $\mathbf{b}_{v_0}$ is nonzero at $v_0$.  So we have the claimed isomorphism.  
Restricting to degree two means assuming that $p_0$ has degree at most two and all the $c_{uv}$ are elements of $\mathbb{C}$, since any higher-degree 
coefficient would force at least one polynomial in the spline to have degree greater than two.  Thus the image of both $\varphi_{\ell}$ and $\varphi_{\ell'}$ is 
$\mathbb{C}^{n-1}$.
\end{proof}

\begin{remark}   
The vector space $\mathbb{C}^{n-1}$ in Lemma~\ref{lemma: varphi for trees} arises because we consider splines over the coefficients $\mathbb{C}[x,y]$.  
This result can be extended in several ways. First, if we use any other field $\mathbb{K}$ of characteristic zero then we would replace the vector space 
$\mathbb{C}^{n-1}$ with $\mathbb{K}^{n-1}$, but the proof would otherwise be the same.  Most applications of splines use real polynomials, giving 
an isomorphism to the vector space $\mathbb{R}^{n-1}$. 

Moreover, the proof 
applies to any edge-labeling $\alpha$ for which the image $\alpha(e)$ for an edge $e$ is always a principal ideal.  In fact, the result in~\cite{GTV.2016} 
applies to a larger family of coefficient rings than just polynomial algebras so this lemma could be generalized further.
\end{remark}

\begin{lemma} 
\label{lemma: homogeneous splines surject onto kernel}
The kernel of the map $\varphi_{\ell}$ restricted to to $\Spl_2(G,\ell)$ as in Remark~\ref{remark.restricted} is precisely the collection of constant 
splines on $(G,\ell)$.  The image of the map $\varphi_{\ell} $ is contained 
in the kernel of the matrix $M^{\mathsf{ext}}$ regardless of whether $\ell$ is homogeneous or not.  Moreover, if $\ell$ is homogeneous then 
$\varphi_{\ell} $ is an isomorphism between the homogeneous splines of degree $2$ on $(G,\ell)$ and $\ker M^{\mathsf{ext}}$.
\end{lemma}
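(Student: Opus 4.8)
The plan is to establish the three assertions in the order stated, using throughout that $G$ is connected and, for the degree-two statements, the preceding remark that $\varphi_{\ell}$ sends a degree-two spline to a vector $(c_e)_{e \in E}$ with every $c_e \in \mathbb{C}$. First, for the kernel: if $\varphi_{\ell}(p) = 0$ then $c_{uv} = 0$ for every edge, so $p(v) = p(u)$ across each edge, and connectivity of $G$ forces $p$ to be constant; conversely a constant spline satisfies $p(v) - p(u) = 0 = 0 \cdot \ell_{uv}$ on every edge, so it lies in the kernel. This gives the first claim for all of $\Spl(G,\ell)$.

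Next, for the containment of the image in $\ker M^{\mathsf{ext}}$, I would fix a degree-two spline $p$ and write $\varphi_{\ell}(p) = (c_e)$ with $c_e \in \mathbb{C}$. For each bounded face $F$, telescoping the vertex-label differences around its face cycle yields the polynomial identity $\sum_{e} m_{F,e}\, c_e\, \ell_e = 0$, where $m_{F,e} \in \{0, \pm 1\}$ is the $(F,e)$ entry of the cycle basis matrix $M$ (the sign recording whether the edge direction agrees with the clockwise traversal of $F$). Writing $\ell_e = (x + a_e y + b_e)^2$ and extracting the degree-two homogeneous component of this identity gives $\sum_e m_{F,e}\, c_e\, (x + a_e y)^2 = 0$; since the $c_e$ are scalars, comparing the coefficients of $x^2$, $xy$, and $y^2$ produces exactly the three equations that constitute the rows of $M^{\mathsf{ext}}$ indexed by $F$, evaluated at $(c_e)$. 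Passing to the top-degree part is precisely what lets the constant terms $b_e$ drop out, which is the content of the phrase ``regardless of whether $\ell$ is homogeneous or not.''

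For the third claim, suppose $\ell$ is homogeneous, so $\ell_e = (x + a_e y)^2$. Surjectivity onto $\ker M^{\mathsf{ext}}$ is the heart of the argument: given $(c_e) \in \ker M^{\mathsf{ext}}$, I would define a vertex labeling by setting $p(v_0) = 0$ and $p(w) = \sum_{e \in \gamma} m_e\, c_e\, \ell_e$ for any path $\gamma$ from $v_0$ to $w$ (with $m_e = \pm 1$ recording the orientation of $e$ along $\gamma$). Well-definedness amounts to checking that this sum vanishes around every cycle, and since the face cycles form a complete cycle basis (Lemma~\ref{lemma.full rank} and the discussion preceding it), it suffices to check it around each face $F$; there the sum equals $\sum_e m_{F,e}\, c_e\, (x + a_e y)^2$, which vanishes precisely because $(c_e) \in \ker M^{\mathsf{ext}}$. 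The resulting $p$ is a spline with $\varphi_{\ell}(p) = (c_e)$, and each $p(w)$ is a $\mathbb{C}$-linear combination of the homogeneous degree-two forms $\ell_e$, hence is homogeneous of degree two or zero; thus $p$ is a homogeneous degree-two spline. For injectivity, the first claim identifies the kernel of $\varphi_{\ell}$ with the constant splines, and the only constant homogeneous degree-two spline vanishing at $v_0$ is the zero spline, since a nonzero constant spline $p \equiv c$ has $p(v_0) = c \neq 0$. Normalizing at $v_0$ therefore removes the three-dimensional space $R_2 \cdot 1$ of constant degree-two splines from the kernel and makes $\varphi_{\ell}$ an isomorphism onto $\ker M^{\mathsf{ext}}$.

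The main obstacle is the surjectivity step: producing an actual spline from a kernel vector requires integrating the edge data to a consistent vertex labeling, and this is exactly where homogeneity of $\ell$ is essential, since it makes each face-cycle sum a single homogeneous degree-two polynomial whose vanishing is literally the kernel condition, while completeness of the face cycle basis reduces the path-independence check to finitely many faces. The remaining points --- the kernel computation, the coefficient comparison, and the degree bookkeeping showing $p(w)$ stays homogeneous of degree two --- are routine once this construction is in place; the only subtlety worth flagging is the normalization at $v_0$ needed to account for the constant splines, which otherwise obstruct injectivity.
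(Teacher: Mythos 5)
Your proposal is correct and follows essentially the same route as the paper: telescope the vertex differences around each face cycle, compare the coefficients of $x^2$, $xy$, $y^2$ (equivalently, extract the degree-two part so the constants $b_e$ drop out) to land in $\ker M^{\mathsf{ext}}$, and in the homogeneous case observe that these three equations per face are exactly the spline conditions. The only difference is one of explicitness --- you spell out the surjectivity by integrating a kernel vector along paths from $v_0$ and checking well-definedness against the face cycle basis, and you handle the normalization by constants at $v_0$ more carefully than the paper's statement does; both are faithful elaborations of what the paper leaves implicit rather than a different argument.
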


\begin{proof}
Suppose that $\vec{c}$ is a vector in the image of $\varphi_{\ell}$ and that $u_0u_1u_2u_3\cdots u_{k-1}u_0$ is any face cycle in $G$ with the vertices 
ordered clockwise around the boundary of the face. 

Then $p \in \Spl_2(G,\ell)$ if and only if
\begin{equation}
\label{equation: edge-label for ordered cycles} 
	p(u_{i+1}) - p(u_i)=d_{u_iu_{i+1}}\ell_{u_iu_{i+1}}
\end{equation}
for some scalar $d_{u_iu_{i+1}}$ on each edge of each face cycle (with indices taken modulo $k$).  In particular, we may take $d_{u_iu_{i+1}}$ 
to be $c_{u_iu_{i+1}}$ if the edge $u_i \mapsto u_{i+1}$ is directed clockwise around its face cycle and $d_{u_iu_{i+1}}=-c_{u_iu_{i+1}}$ if not.

Repeated substitution of Equation~\eqref{equation: edge-label for ordered cycles} shows that $p \in \Spl_2(G,\ell)$ if and only if we can write $p(u_0)$ as
\[
	\begin{array}{ll} p(u_0) &= p(u_{k-1}) + d_{u_{k-1}u_0} \ell_{u_{k-1}u_0} = p(u_{k-2}) + d_{u_{k-2}u_{k-1}} \ell_{u_{k-2}u_{k-1}} + d_{u_{k-1}u_0} 
	\ell_{u_{k-1}u_0} = \cdots \\ & = p(u_0) + d_{u_0u_1} \ell_{u_0u_1} + d_{u_1u_2} \ell_{u_1u_2} + \cdots + d_{u_{k-2}u_{k-1}} \ell_{u_{k-2}u_{k-1}} 
	+ d_{u_{k-1}u_0} \ell_{u_{k-1}u_0}  \end{array}
\]	
for each face cycle in $G$. Subtracting $p(u_0)$ from both sides gives
\[
	d_{u_0u_1} \ell_{u_0u_1} + \cdots + d_{u_{k-2}u_{k-1}} \ell_{u_{k-2}u_{k-1}} + d_{u_{k-1}u_0} \ell_{u_{k-1}u_0} =0.
\]
Examining the coefficients of $x^2$, $xy$, and $y^2$ in this equation gives the triple of equations
\[\left\{\begin{array}{lllll}
	d_{u_0u_1} &+ \cdots &+ d_{u_{k-2}u_{k-1}} &+ d_{u_{k-1}u_0}  &=0, \\
	d_{u_0u_1} a_{u_0u_1} &+ \cdots &+ d_{u_{k-2}u_{k-1}} a_{u_{k-2}u_{k-1}} &+ d_{u_{k-1}u_0} a_{u_{k-1}u_0} & =0, \\
	d_{u_0u_1} a_{u_0u_1}^2 &+ \cdots &+ d_{u_{k-2}u_{k-1}} a_{u_{k-2}u_{k-1}}^2 &+ d_{u_{k-1}u_0} a_{u_{k-1}u_0}^2 & =0.
\end{array}\right.
\]
Note that these are the coefficients of $x^2$, $xy$, and $y^2$ regardless of whether $\ell$ is a homogeneous or non-homogeneous edge-labeling.  
If $\ell$ is a homogeneous labeling, then $p \in \Spl_2(G,\ell)$ if and only if this triple of equations holds for each face cycle in $G$.  If $\ell$ is 
non-homogeneous, then $p \in \Spl_2(G,\ell)$ implies that this triple of equations holds for each face cycle in $G$ but additional equations (for the 
coefficients of $x$, $y$, and the constant term) must also hold.

Since $c_{u_iu_{i+1}}= \pm d_{u_iu_{i+1}}$ depending on whether $u_i \mapsto u_{i+1}$ is directed clockwise around the face cycle or not, we conclude 
that $\vec{c}$ is in the kernel of $M^{\mathsf{ext}}$ if $p \in \Spl_2(G,\ell)$ regardless of whether $\ell$ is homogeneous or not.

Moreover, when $\ell$ is homogeneous we have $\vec{c} \in \ker M^{\mathsf{ext}}$ if and only if $p \in \Spl_2(G,\ell)$.  In other words, for 
homogeneous $\ell$ the map $\varphi_{\ell} $ surjects onto $\ker M^{\mathsf{ext}}$.  

For both homogeneous and non-homogeneous edge-labelings, the kernel of $\varphi_{\ell}$ consists of precisely those splines $p \in \Spl_2(G,\ell)$ 
for which $p(u)=p(v)$ for all vertices $u, v$ in $G$.   This proves the claim.
\end{proof}

\begin{definition}
\label{definition.face translate}
Let $G$ be a planar graph.  We say that a (not necessarily homogeneous) edge-labeling $\ell'$ is \defn{face-translatable} if for each face cycle 
$C = e_1e_2 \cdots e_ke_1$ in $G$ there is a point $(x_0,y_0)$ that simultaneously solves the equation $\ell'(e_i)=0$ for each edge-label on $C$.
\end{definition}

For instance, a homogeneous edge-labeling is face-translatable because the point $(0,0)$ simultaneously solves every equation of the form $ax+by=0$.  

For a non-homogeneous example, consider the edge-labeled graph $G$ that is dual to the planar triangulation $\Delta$.  Each face $F_0$ in $G$ 
corresponds to an interior vertex $(x_0,y_0)$ of the triangulation $\Delta$ by construction of dual graphs.  Also by the construction of dual {\it edge-labeled} 
graphs, all of the edge-labels $ax+by+c$ on the face cycle bounding $F_0$ correspond to line segments that go through this interior vertex $(x_0,y_0)$
in $\Delta$.  Thus the point $(x_0,y_0)$ satisfies the equation $ax+by+c=0$ whenever there is an edge labeled $ax+by+c$ bounding $F_0$.
A triangulation and its dual graph are depicted in Figure~\ref{figure.triangulation}. This example appeared in~\cite{MS.1975} and shows that the
lower bound on the dimension of splines in Schumaker's landmark paper~\cite{Schumaker.1979} can fail to give the dimension of splines
for degree two and smoothness one. We analyze this example further in Example~\ref{example: Morgan-Scott} below.

\begin{center}
\begin{figure}

\scalebox{0.8}{
\begin{tikzpicture}
\draw[red,thick] (0,2) to (0,0);
\draw[red,thick] (0,0) to (1.5,-1.32);
\draw[red,thick] (0,0) to (-1.5,-1.32);
\draw[red,thick] (-1.5,0.68) to (-1.5,-1.32);
\draw[red,thick] (1.5,0.68) to (1.5,-1.32);
\draw[red,thick] (0,2) to (1.5,0.68);
\draw[red,thick] (0,2) to (-1.5,0.68);
\draw[red,thick] (0,-2.64) to (1.5,-1.32);
\draw[red,thick] (0,-2.64) to (-1.5,-1.32);
\node at (0,)[anchor=south east]{};

\draw[thick] (-0.8,0.68) to (0.8,0.68);
\draw[thick] (-0.8,0.68) to (0,-1.4);
\draw[thick] (0.8,0.68) to (0,-1.4);
\draw[thick] (0,-1.4) to (3.8,-3);
\draw[thick] (0.8,0.68) to (3.8,-3);
\draw[thick] (0,-1.4) to (-3.8,-3);
\draw[thick] (-0.8,0.68) to (-3.8,-3);
\draw[thick] (-3.8,-3) to (3.8,-3);
\draw[thick] (-0.8,0.68) to (0,3.8);
\draw[thick] (0.8,0.68) to (0,3.8);
\draw[thick] (-3.8,-3) to (0,3.8);
\draw[thick] (3.8,-3) to (0,3.8);
\filldraw[red] (0,2) circle (3pt);
\filldraw[red] (0,0) circle (3pt);
\filldraw[red] (1.5,-1.32) circle (3pt);
\filldraw[red] (-1.5,-1.32) circle (3pt);
\filldraw[red] (1.5,0.68) circle (3pt);
\filldraw[red] (-1.5,0.68) circle (3pt);
\filldraw[red] (0,-2.64) circle (3pt);
\end{tikzpicture}}
\caption{Triangulation and its dual graph which appeared in~\cite{MS.1975}.\\
This triangulation will be further analyzed in Example~\ref{example: Morgan-Scott}.
\label{figure.triangulation}}
\end{figure}
\end{center}
\begin{lemma}
Suppose that $C$ is a directed cycle with homogeneous edge-labeling $\ell$ and face-translatable edge-labeling $\ell'$ giving rise to a particular 
system of splines associated to $\ell$.  Then 
\[
	\varphi_{\ell'} \colon \Spl_2(C,\ell') \longrightarrow \ker M^{\mathsf{ext}}
\] 
is a surjection. 
\end{lemma}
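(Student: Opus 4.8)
The plan is to use the earlier Lemma~\ref{lemma: homogeneous splines surject onto kernel} for the easy half and then establish surjectivity by an explicit construction that exploits face-translatability. Since $C$ is a single directed cycle it bounds exactly one face, so $M^{\mathsf{ext}}$ is the $3\times k$ matrix of~\eqref{equation.extended} and $\ker M^{\mathsf{ext}}$ is the set of vectors $\vec c=(c_1,\dots,c_k)\in\mathbb{C}^k$ satisfying the three homogeneous conditions $\sum_i\varepsilon_ic_i=\sum_i\varepsilon_ia_ic_i=\sum_i\varepsilon_ia_i^2c_i=0$. By Lemma~\ref{lemma: homogeneous splines surject onto kernel} the image of $\varphi_{\ell'}$ already lies in $\ker M^{\mathsf{ext}}$, so the only thing left to prove is that every such $\vec c$ is attained.

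The key observation is that face-translatability rewrites $\ell'$ as a homogeneous labeling in translated coordinates. By hypothesis $\ell'$ is the particular labeling associated to the homogeneous $\ell$, so $\ell(e_i)=(x+a_iy)^2$ and $\ell'(e_i)=(x+a_iy+b_i)^2$ for the same slopes $a_i$. Face-translatability supplies a point $(x_0,y_0)$ with $x_0+a_iy_0+b_i=0$ for every edge on $C$, whence $b_i=-(x_0+a_iy_0)$ and
\[
\ell'(e_i)=\bigl((x-x_0)+a_i(y-y_0)\bigr)^2.
\]
In other words, after the substitution $X=x-x_0$, $Y=y-y_0$ the labeling $\ell'$ becomes exactly the homogeneous labeling $(X+a_iY)^2$.

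Given $\vec c\in\ker M^{\mathsf{ext}}$, I set $d_i=\varepsilon_ic_i$, so that the kernel conditions read $\sum_id_i=\sum_ia_id_i=\sum_ia_i^2d_i=0$. Expanding the squares in the translated variables gives
\[
\sum_{i}d_i\,\ell'(e_i)=\Bigl(\sum_id_i\Bigr)(x-x_0)^2+2\Bigl(\sum_ia_id_i\Bigr)(x-x_0)(y-y_0)+\Bigl(\sum_ia_i^2d_i\Bigr)(y-y_0)^2=0.
\]
This single identity is the heart of the argument: because the translation has removed the constant term from each linear factor, the three homogeneous equations defining $\ker M^{\mathsf{ext}}$ already force the telescoping sum to vanish, and no separate conditions on the coefficients of $x$, $y$, or the constant term survive. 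I then define $p$ by fixing $p(u_0)=0$ and setting $p(u_{i+1})=p(u_i)+d_i\ell'(e_i)$ as I walk clockwise around $C$. The vanishing of $\sum_id_i\ell'(e_i)$ guarantees that $p$ is single-valued when the walk returns to $u_0$; each $p(u_i)$ is a sum of degree-two polynomials and hence has degree at most two, so $p\in\Spl_2(C,\ell')$; and unwinding the convention $d_i=\varepsilon_ic_i$ shows $\varphi_{\ell'}(p)=\vec c$. This proves surjectivity.

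I expect the only delicate points to be the sign bookkeeping relating $c_i$, $d_i$, and the edge orientations $\varepsilon_i$, together with verifying that the around-the-cycle construction really yields a well-defined vertex labeling; both are routine. The genuine content --- and the precise reason face-translatability is the right hypothesis --- is the coordinate translation in the second paragraph, which collapses the six coefficient equations (for $x^2,xy,y^2,x,y,1$) down to the three homogeneous ones captured by $M^{\mathsf{ext}}$.
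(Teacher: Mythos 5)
Your proposal is correct and follows essentially the same route as the paper: both reduce surjectivity to showing that the three kernel conditions $\sum_i d_i=\sum_i a_id_i=\sum_i a_i^2d_i=0$ force $\sum_i d_i\ell'(e_i)=0$, using the face-translation point $(x_0,y_0)$. The only difference is presentational --- you substitute $X=x-x_0$, $Y=y-y_0$ so that $\ell'$ becomes homogeneous in the new coordinates and the vanishing is immediate, whereas the paper expands $(x+a_iy+b_i)^2$ directly and collects the lower-order terms into multiples of the same three sums; your packaging is arguably cleaner and also fixes a sign slip in the paper's formula for $b_i$.
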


\begin{proof}
From Lemma~\ref{lemma: homogeneous splines surject onto kernel}, we know that the image of $\varphi_{\ell'}$ is contained in $\ker M^{\mathsf{ext}}$.  
We now prove that the two sets are in fact equal.

If $\ell'$ is face-translatable, then by definition there is a point $(x_0,y_0)$ associated to $C$ so that each edge that bounds $C$ is labeled by an 
expression of the form
\[
	(x+a_iy+b_i)^2 \hspace{0.5in} \textup{ where } b_i = a_i y_0-x_0.
\]

We want to show that if $\vec{c}$ is in the kernel of this matrix then $\vec{c} \in \varphi_{\ell'}(\Spl_2(C,\ell'))$ namely that
\[ 
	d_1(x+a_1y + b_1)^2 + d_2(x+a_2y+b_2)^2 + \cdots + d_k (x+a_ky+b_k)^2 = 0,
\]
where the $d_i = \pm c_i$ as in the proof of Lemma~\ref{lemma: homogeneous splines surject onto kernel}.
Associating, distributing, and collecting terms, we can expand the previous equation as
\[
\begin{array}{rl}
 	\displaystyle d_1((x+a_1y) + b_1)^2 + d_2((x+a_2y)+b_2)^2 + \cdots + d_k ((x+a_ky)+b_k)^2 & \\ 
 	= \displaystyle \sum_{i=1}^k \left( d_i(x+a_iy)^2\right)+2\sum_{i=1}^k \left(d_ib_i(x+a_iy)\right) + \sum_{i=1}^kd_ib_i^2. &
\end{array}
\]
The first sum in this expression can be rewritten as $\sum_{i=1}^k d_i \ell_i$ using the homogeneous edge-labeling $\ell$.  This is the defining equation 
for the splines $\Spl_2(C,\ell)$ so by Lemma~\ref{lemma: homogeneous splines surject onto kernel} we know that $\vec{c} \in \ker M^{\mathsf{ext}}$ 
implies $\sum_{i=1}^k d_i \ell_i = 0$.

Now substitute $b_i = a_i y_0 - x_0$ in the other two sums to get the expression
\[
	\begin{array}{rl} \displaystyle 2\sum_{i=1}^k & \displaystyle \left( d_i(a_iy_0 - x_0)(x+a_iy)\right) + \sum_{i=1}^kd_i(a_iy_0-x_0)^2 \\ 
	&= \displaystyle  2\sum_{i=1}^k \left(d_ia_i^2y_0y +d_ia_iy_0x -d_ix_0x -d_ix_0a_iy \right) + \sum_{i=1}^kd_i\left(a_i^2y_0^2-2a_iy_0x_0+x_0^2\right).	
	\end{array}
\] 
None of $x, y, x_0, y_0$ depend on the index of summation $i$, so we can distribute once more to obtain
\[
	(2y_0y+y_0^2)\sum_{i=1}^k d_ia_i^2 + 2(y_0x-x_0y-x_0y_0)\sum_{i=1}^k d_ia_i + (-2x_0x+x_0^2)\sum_{i=1}^k d_i.
\]
These sums are the rows of $M^{\mathsf{ext}}\vec{c}$ and so are each zero by our assumption that $\vec{c}$ is in the kernel of $M^{\mathsf{ext}}$.  
It follows that $\sum_{i=1}^k d_i \ell_i'=0$ and so $\varphi_{\ell'}$ surjects onto the kernel as desired.
\end{proof}

\begin{lemma}\label{lemma: facetranslateable means nonhom splines are iso to hom}
Suppose that $G$ is a directed planar graph, that $\ell$ is a homogeneous edge-labeling of $G$, and that $\ell'$ is a {\it face-translatable} 
non-homogeneous edge-labeling of $G$ whose associated homogeneous edge-labeling is $\ell$.  

Then $\varphi_{\ell'} \colon \Spl_2(G,\ell') \longrightarrow \ker M^{\mathsf{ext}}$ is a surjection.  In particular 
\[
	\varphi_{\ell}^{-1} \circ \varphi_{\ell'}: \Spl_2(G,\ell') \longrightarrow \Spl_2(G,\ell)
\]
is an isomorphism of (real or complex) vector spaces when restricted to degree-2 splines.  In particular, homogenizing the spline system associated to 
$\ell'$ does not change the dimension of the space of degree-2 splines as long as $\ell'$ is face-translatable.
\end{lemma}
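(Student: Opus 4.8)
The plan is to reduce the entire statement to the single claim that $\varphi_{\ell'}$ is surjective onto $\ker M^{\mathsf{ext}}$, since the isomorphism and the dimension equality then fall out formally. The inclusion $\im \varphi_{\ell'} \subseteq \ker M^{\mathsf{ext}}$ is already supplied by Lemma~\ref{lemma: homogeneous splines surject onto kernel} (and holds whether or not $\ell'$ is homogeneous), so the real work is to start from an arbitrary $\vec{c} \in \ker M^{\mathsf{ext}}$ and manufacture a spline $p \in \Spl_2(G,\ell')$ with $\varphi_{\ell'}(p) = \vec{c}$. The preceding single-cycle lemma already does exactly this computation on one face cycle; the task here is to globalize it over the whole planar graph.

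First I would fix a spanning tree $T$ of $G$ rooted at $v_0$ and define a candidate vertex-labeling $p$ by ``integrating $\vec{c}$ along $T$'': set $p(v_0) = 0$ and, moving out along the unique tree path from $v_0$ to each vertex $v$, accumulate the contributions $\pm c_e \ell'_e$, where the sign records whether the directed edge $e$ agrees with the direction of travel. Since $T$ is acyclic this is well-defined; it is a degree-two labeling because each $c_e \in \mathbb{C}$ and each $\ell'_e$ has degree two; and by construction $\varphi_{\ell'}(p)$ restricted to the edges of $T$ already equals $\vec{c}$ on those edges. What remains is to verify the spline condition $p(v)-p(u) = c_{uv}\ell'_{uv}$ on every non-tree edge $uv$, which is precisely what both certifies $p \in \Spl_2(G,\ell')$ and forces $\varphi_{\ell'}(p) = \vec{c}$ on all of $E(G)$.

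The non-tree edges are handled through the cycle space, and this is where face-translatability enters. For an oriented cycle $Z$ write $S(Z) = \sum_e \sigma_e(Z)\, c_e \ell'_e$ for the signed sum of $\ell'$-contributions around $Z$; this functional is linear on the cycle space because the signed edge-multiplicities add under sums of cycles. For a non-tree edge $e = uv$, the obstruction $\bigl(p(v)-p(u)\bigr) - c_{uv}\ell'_{uv}$ equals $S(Z_e)$ for a suitable orientation of the fundamental cycle $Z_e$ of $e$ relative to $T$, so I only need $S(Z_e) = 0$. Since the face cycles of the finite planar graph $G$ form a cycle basis (Lemma~\ref{lemma.full rank} and the surrounding discussion), I may write $Z_e = \sum_F n_F C_F$ with $n_F \in \mathbb{Z}$, whence $S(Z_e) = \sum_F n_F\, S(C_F)$ by linearity. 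Finally, restricting $\vec{c}$ to a face cycle $C_F$ places it in the kernel of the corresponding $3 \times k$ block of $M^{\mathsf{ext}}$, and $\ell'$ restricted to $C_F$ is face-translatable by hypothesis, so the preceding single-cycle lemma gives $S(C_F) = \sum_{e \in C_F} \pm c_e \ell'_e = 0$ for every face $F$. Hence $S(Z_e) = 0$, $p$ is a genuine spline on $(G,\ell')$ vanishing at $v_0$, and $\varphi_{\ell'}(p) = \vec{c}$, establishing surjectivity.

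With surjectivity in hand the conclusion is formal. By Lemma~\ref{lemma: homogeneous splines surject onto kernel} both $\varphi_{\ell'}$ and $\varphi_{\ell}$ have kernel exactly the constant splines and both surject onto $\ker M^{\mathsf{ext}}$ (for $\varphi_\ell$ because $\ell$ is homogeneous). Restricting to splines that vanish at $v_0$ removes the constant splines, so each of $\varphi_{\ell'}$ and $\varphi_{\ell}$ becomes a bijection onto $\ker M^{\mathsf{ext}}$, and $\varphi_{\ell}^{-1} \circ \varphi_{\ell'}$ is the induced isomorphism of vector spaces (over $\mathbb{C}$, or over $\mathbb{R}$ if one works there), exactly as in the tree case of Lemma~\ref{lemma: varphi for trees}. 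Since $\Spl_2(G,\ell')$ and $\Spl_2(G,\ell)$ each decompose as the space of constant degree-two splines together with a copy of $\ker M^{\mathsf{ext}}$, their dimensions coincide. I expect the main obstacle to be the non-tree-edge step: the careful bookkeeping of orientations that makes $S$ honestly linear on the cycle space and identifies the obstruction with $S(Z_e)$, followed by the reduction of each fundamental cycle to face cycles via the cycle-basis property so that the single-cycle lemma can be applied face by face. The genuinely algebraic content — converting the homogeneous closure encoded by $\ker M^{\mathsf{ext}}$ into the non-homogeneous closure $\sum \pm c_e \ell'_e = 0$ — is already isolated in the preceding lemma and only needs to be invoked once per face.
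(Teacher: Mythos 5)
Your proof is correct, but it is organized differently from the paper's. The paper proves surjectivity of $\varphi_{\ell'}$ by induction on the number of bounded faces: it peels off an edge $e_1$ shared between a bounded face $F_1$ and the unbounded face, observes that $M^{\mathsf{ext}}$ loses the column for $e_1$ and the three rows for $F_1$, applies the inductive hypothesis to the smaller graph $G'$ and the single-cycle lemma to the face cycle of $F_1$, and concludes that $\varphi_{\ell'}$ ``factors through'' these two surjections. You instead construct the preimage of a given $\vec{c}\in\ker M^{\mathsf{ext}}$ explicitly in one pass: integrate $\vec{c}$ along a spanning tree rooted at $v_0$, identify the obstruction on each non-tree edge with the signed sum $S(Z_e)$ over its fundamental cycle, expand $Z_e$ in the face cycle basis, and kill each face-cycle contribution by the single-cycle lemma (which is where face-translatability enters, exactly as in the paper). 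Both arguments rest on the same single-cycle computation; what yours buys is an explicit, non-inductive construction of the spline and a cleaner justification of the global closure condition via linearity of $S$ on the cycle space, whereas the paper's induction is shorter on the page but leaves the ``factors through'' step somewhat informal. The only points to be careful about in a written version are the ones you already flag: the sign bookkeeping identifying the non-tree-edge obstruction with $S(Z_e)$, the consistency of the clockwise orientation used in the rows of $M^{\mathsf{ext}}$ with the coefficients $n_F$ in the face-cycle decomposition, and (for disconnected $G$) replacing the spanning tree by a spanning forest. The concluding formal step --- both $\varphi_{\ell}$ and $\varphi_{\ell'}$ have kernel the constant splines and surject onto $\ker M^{\mathsf{ext}}$, hence restrict to bijections on splines vanishing at $v_0$ --- matches the paper exactly.
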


\begin{proof}
We prove this by induction on the number of faces in $G$.  Lemma~\ref{lemma: varphi for trees} proved the claim when $G$ has zero faces, 
namely is a tree.  Assume the claim holds for all directed planar graphs with at most $k-1$ faces and assume that $G$ has $k$ faces.  If so, 
then $G$ has a face that shares an edge with the unbounded face.  Let $e_1$ be one such edge and let $F_1$ be the (unique) bounded face 
with $e_1$ on its boundary.  Write $M^{\mathsf{ext}}$ for the matrix associated to $G$ and $\varphi_{\ell'}$ for the map associated to $G$.

Let $G'$ be the directed graph with $k-1$ faces obtained by removing $e_1$ from $G$.  Let ${M^{\mathsf{ext}}}'$ be the matrix associated 
to $G'$ and $\varphi_{\ell'}'$ for the map associated to the graph $(G',\ell'_{E(G')})$. We obtain ${M^{\mathsf{ext}}}'$ from 
$M^{\mathsf{ext}}$ by erasing the  column for $e_1$ as well as the three rows for $F_1$.  By induction we know  
\[
	\varphi_{\ell'}' \colon \Spl_2(G',\ell') \longrightarrow \ker {M^{\mathsf{ext}}}'
\]
is surjective.  

Now let $C$ be the face cycle that bounds $F_1$ and write $\ell'_C$ for the restriction of $\ell'$ to $C$.  Write  ${M^{\mathsf{ext}}_{C}}$ for the 
matrix associated to $C$ and $\varphi_{\ell'_C}$ for the map associated to the graph $(C,\ell'_{C})$. We obtain ${M^{\mathsf{ext}}_{C}}$ from $M^{\mathsf{ext}}$ by erasing all rows except the three rows for $F_1$ and all columns for edges not on $C$.  By induction we know  
\[
	\varphi_{\ell'_C} \colon \Spl_2(C,\ell'_C) \longrightarrow \ker {M^{\mathsf{ext}}_{C}}
\]
is also surjective.  

Finally, note that $\varphi_{\ell'}'$ is simply the map $\varphi_{\ell'}$ restricted to the edges $E(G) - \{e_1\}$ while $\varphi_{\ell'_C}$ is the map 
$\varphi_{\ell'}$ restricted to the edges in $C$.  This means the map 
\[
	\varphi_{\ell'} \colon \Spl_2(G,\ell') \longrightarrow \ker {M^{\mathsf{ext}}}
\]
factors through the two maps $\varphi_{\ell'_C}$ and $\varphi_{\ell'}'$.  So by induction it, too, is surjective.  So we conclude that when $\ell'$ 
is face-translatable, the dimension of $\Spl_2(G,\ell')$ is the same as the dimension of the homogenized system $\Spl_2(G,\ell)$, as desired.
\end{proof}

We make one other simplifying assumption in our labeling: that the coefficient of $x$ can be taken to be $1$.  This comes from the geometry 
of the planar triangulation. In general, we could have edge-labels of the form $(ay+b)^{r+1}$ with no $x$ term.  When the labeled graph is dual to a 
planar triangulation, we can without loss of generality rotate slightly through a vertex in the planar triangulation until no line segment in the 
triangulation is horizontal.  This is possible by the pigeonhole principal, since there is a (continuously) infinite number of angles we can rotate 
through but only a finite number of edges that can be horizontal.  Moreover, rotating the planar triangulation induces an isomorphism of splines 
since it induces a change of variables on the underlying coordinates $x$ and $y$.  If no edge in the triangulation is horizontal, then no 
edge-label in the dual graph has the form $(ay+b)^{r+1}$. 

We obtain the same result for edge-labelings that are not dual to a planar triangulation by changing variables according to the (invertible) operation
\[(x,y) \mapsto (x_0x-y_0y, y_0x+x_0y)\]
for some choice of scalars $x_0, y_0$.  The image of edge labels under this map is:
\[(ax+by+c)^2 \mapsto \left( (ax_0+by_0)x+ (bx_0-ay_0)y + c\right)^2.\]
We assume that at least one of $a, b$ is nonzero for each edge label, namely the edge label is not degenerate.  Thus by the same pigeonhole-principle argument as above, we can choose $x_0, y_0$ so that $ax_0+by_0 \neq 0$ simultaneously for all edge labels' coefficients $a, b$.  Since changing variables induces an isomorphism on splines, it preserves dimensions.

Finally, note that any nonzero multiple of an edge-label produces the same spline condition on that edge, since
\[
	p(u)-p(v)  = c \ell_{uv} \textup{ if and only if } p(u)-p(v) = \frac{c}{d} \left(d\ell_{uv}\right).
\]
(This is why it can be convenient to think of the edge-labels as ideals rather than elements of a ring.) In particular, if $(ax+by+c)^2$ is 
an edge-label with $a$ a nonzero scalar, we obtain the same collection of splines by replacing $(ax+by+c)^2$ with the edge-label 
$\left(x+\frac{b}{a}y+\frac{c}{a}\right)^2$.

We collect the results from this section in the following theorem.

\begin{theorem} \label{theorem: simplifying labeling}
Suppose that $G$ is a directed planar graph and that $\ell'$ is a face-translatable non-homogeneous edge-labeling of $G$ by polynomials of the form $(ax+by+c)^2$ with associated homogeneous labeling $\ell$.  Then 
\[\Spl_2(G,\ell') \cong \Spl_2(G,\ell)\]
as (real or complex) vector spaces. Moreover, via change of variables if necessary, we can assume that for each edge $uv$ the homogeneous edge label $\ell(uv)$ has the form $(x+by)^2$ for some scalar $b$.  

In particular, if $\Delta$ is a planar triangulation and $(G_{\Delta}, \ell')$ is the associated dual graph with its edge-labeling, we have an isomorphism of vector spaces
\[\Spl_2(G_{\Delta},\ell') \cong \Spl_2(G_{\Delta},\ell)\]
to the homogeneous splines obtained by ignoring all constant terms.
\end{theorem}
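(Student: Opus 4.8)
The plan is to assemble the lemmas of this section rather than to prove anything genuinely new, since the theorem is explicitly stated as a collection of earlier results. First I would obtain the isomorphism $\Spl_2(G,\ell') \cong \Spl_2(G,\ell)$ directly from Lemma~\ref{lemma: facetranslateable means nonhom splines are iso to hom}: because $\ell'$ is face-translatable with associated homogeneous labeling $\ell$, that lemma exhibits $\varphi_{\ell}^{-1}\circ\varphi_{\ell'}\colon \Spl_2(G,\ell')\to\Spl_2(G,\ell)$ as an isomorphism of (real or complex) vector spaces, which in particular forces the two spaces to have equal dimension. Nothing further is needed for the first displayed claim.

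Next, for the ``moreover'' clause I would invoke the change of variables recorded in the Remark preceding the theorem. The substitution $(x,y)\mapsto(x_0x-y_0y,\,y_0x+x_0y)$ is an invertible linear map, hence induces a degree-preserving $\mathbb{C}$-algebra automorphism of $\mathbb{C}[x,y]$; it therefore carries $\Spl_2(G,\ell)$ isomorphically onto the spline space for the substituted labeling, and because the substitution is homogeneous it commutes with the deletion of terms of degree less than two, so it may be applied directly at the level of the homogeneous labels. The point that must be verified is that one choice of $(x_0,y_0)$ works for all edges at once: each homogeneous label $(ax+by)^2$ (with $(a,b)\neq(0,0)$ by the nondegeneracy assumption) is sent to $((ax_0+by_0)x+(bx_0-ay_0)y)^2$, whose $x$-coefficient vanishes only on the line $ax_0+by_0=0$ in the $(x_0,y_0)$-plane; since $G$ has finitely many edges, a pigeonhole argument produces a common $(x_0,y_0)$ avoiding all these lines. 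Rescaling each label by the nonzero scalar $1/(ax_0+by_0)^2$, which leaves unchanged the ideal it generates and hence the spline condition on that edge, then puts every homogeneous label into the required form $(x+by)^2$.

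Finally, for the triangulation case I would verify that the dual edge-labeling $\ell'$ is face-translatable and then apply the first part. This verification is exactly the discussion following Definition~\ref{definition.face translate}: each bounded face $F_0$ of $G_{\Delta}$ corresponds to an interior vertex $(x_0,y_0)$ of $\Delta$, and every edge bounding $F_0$ is labeled by the square of the equation of a line through $(x_0,y_0)$, so $(x_0,y_0)$ simultaneously solves $\ell'(e)=0$ for all edges $e$ on that face cycle. Hence $\ell'$ is face-translatable, and the isomorphism $\Spl_2(G_{\Delta},\ell')\cong\Spl_2(G_{\Delta},\ell)$ is the special case of the first claim.

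Because the theorem only repackages facts already established, I do not expect a substantive obstacle; the only mildly delicate point is confirming that a single change of variables can be chosen uniformly across all edge labels, and that this change of variables is compatible with homogenization, both of which are handled by the pigeonhole argument and the degree-preserving linearity of the substitution noted above.
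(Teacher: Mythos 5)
Your proposal is correct and matches the paper exactly: the paper gives no separate proof, introducing the theorem with ``We collect the results from this section in the following theorem,'' so the intended argument is precisely the assembly you describe — Lemma~\ref{lemma: facetranslateable means nonhom splines are iso to hom} for the isomorphism, the preceding remark's pigeonhole change of variables and rescaling for the $(x+by)^2$ normalization, and the face-translatability of triangulation duals noted after Definition~\ref{definition.face translate} for the final claim.
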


\section{Contracting and reducing to the minimal contractible case}
\label{section.contractible}

In this section, we examine certain numerical conditions on subgraphs of $G$ that are related to block-triangularity within the matrix $M^{\mathsf{ext}}$. We will then reinterpret the diagonal blocks of $M^{\mathsf{ext}}$ graph-theoretically as pairs of a subgraph and a complementary contracted graph, setting up a recursive algorithm to decompose splines on arbitrary graphs (which we detail in Section~\ref{section.algorithm}).  The graphs that serve as the fundamental building blocks of this algorithm are called minimal contractible.  We analyze them combinatorially in this section and describe the matrices $M^{\mathsf{ext}}$ corresponding to minimal contractible graphs in Section~\ref{section.all edge injective}.

Throughout this section, we assume that $G$ is a finite, directed, planar graph without leaves.

Recall the following definitions from graph theory.

\begin{remark}
\label{remark.contracted}
Suppose $G$ contains the edge $e=uv \in E(G)$. The \defn{contraction} of $G$ along the edge $e=uv$ is the graph $G \backslash \{uv\}$ with vertex set $V(G)/(u \sim v)$ in which $u$ and $v$ are identified, and edge set $E(G)-\{uv\}$.  If $G'$ is a subgraph of $G$ consisting of a subset of faces of $G$ together with the vertices and edges bounding those faces, then the \defn{contraction of $G$ along the subgraph $G'$} is the graph 
$G \backslash G'$ with vertex set 
\[
	V(G \backslash G') = \left(V(G) - V(G')\right) \cup \bigcup C_i,
\]
where the union is taken over connected components $C_i$ of $G'$, that is, the vertices of $G'$ are replaced by a new vertex for each connected component of $G'$. The edge set is
\[E(G \backslash G') = E(G)-E(G').\]  
In other words, the contraction $G \backslash G'$ is obtained by successively, for each edge $e=uv \in E(G')$, contracting  the vertices $u$ and $v$ to one vertex.
\end{remark}

\subsection{Contractibility and minimal contractibility}
\label{section: minimal contractibility}

We now define \emph{contractibility}, which is a numeric condition on a set of faces $S$ together with their bounding vertices and edges.   
We then prove a set of startling results that together show contractibility sharply restrict what graphs can look like---especially so if the graph has no 
proper contractible subset of faces or if the graph is dual to a triangulation. In the next section, we collect these results to give an explicit description 
of the polynomials defining the algebraic variety for edge labels in special position.

\begin{definition}
\label{definition.contractible}
Suppose $S$ is a subset of the faces of the graph $G$. 
\begin{enumerate}
\item
Let $G_S$ be the corresponding subgraph of $G$ whose vertex set consists of all vertices on any face in $S$ and whose edge set consists 
of all edges on any face in $S$. 
\item
Let $e_S$ be the number of edges in $G_S$ and let $f_S$ be the number of faces in $G_S$.  Let $G \backslash G_S$ be the graph obtained from $G$ by 
contracting $G_S$.
\item
If $e_S - 3f_S \leqslant 0$, the subset $S$ is called a \defn{contractible subset of faces.}  
\item
If $e_S - 3f_S \leqslant 0$, but no proper subset $S' \subsetneq S$ satisfies $e_{S'} - 3f_{S'} \leqslant 0$, the subset $S$ is called a 
\defn{minimal contractible subset of faces.}  
\end{enumerate}
\end{definition}

\begin{remark} \label{remark.not-induced-subgraph}
    Note that $G_S$ is not necessarily an induced subgraph of $G$ in the graph-theoretic sense because if two vertices $u, v \in V(G_S)$ define an edge in $G$, 
but that edge is not on a face in $S$ then they are not neighbors in $G_S$.
\end{remark}

Contractibility actually constrains a graph considerably, as the next lemma describes.  The main point is that if a graph has a contractible subset of faces then we can find a contractible subset $S$ for which $e_S-3f_S \in \{0,-1,-2\}$ and if $S$ is minimal contractible then we can refine this even further.

\begin{lemma}
\label{lemma.minimal contractible}
  Suppose that $G$  is a finite, planar graph with a contractible subset of faces. Then  $G$ must have a contractible set $S$ of faces such that 
  \[e_S-3f_S \in \{0,-1,-2\}.\]
  
  Moreover suppose $S$ is a minimal contractible subset of faces. Then one of two cases holds:
  \begin{itemize} 
   \item If $G$ has at least two faces: Every face in $G$ has at most one edge on the boundary (that is, an edge not shared with another face) and 
   \[e_G - 3f_G \in \{0,-1\}.\]
 \item If $G$ has just one face, then that face is one of the following:
  \begin{itemize}
      \item a loop consisting of one vertex and an edge from the vertex to itself;
      \item a $2$-cycle consisting of two vertices with two edges between them;
      \item or a $3$-cycle consisting of three vertices with all possible edges between them.
  \end{itemize}
  \end{itemize}
\end{lemma}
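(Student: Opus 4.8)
The plan is to reduce the entire statement to a single ``one-face deletion'' estimate and then read off all three conclusions from it. First I would observe that every quantity appearing in the statement (the counts $e_S$, $f_S$, the boundary edges of a face, and the contracted graph) depends only on the subgraph $G_S$, so I may replace $G$ by $G_S$ and assume throughout that $G$ is itself minimal contractible, i.e.\ $G = G_S$ with $f_G = f_S$ faces. A minimal contractible subset exists whenever a contractible one does: the contractible subsets form a finite nonempty family (excluding the empty set), so I pick one minimal under inclusion. For the existence assertion ($e_S - 3f_S \in \{0,-1,-2\}$) it then suffices to analyze such a minimal $S$.

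The engine is the following. Suppose $f_S \geqslant 2$ and fix a face $F \in S$, and let $b_F$ denote the number of edges of $G_S$ lying on $F$ but on no other face of $S$ (the edges $F$ contributes to the outer boundary). Deleting $F$ from $S$ removes exactly those $b_F$ edges and one face, so $e_{S\setminus\{F\}} = e_S - b_F$ and $f_{S\setminus\{F\}} = f_S - 1$. Since $S\setminus\{F\}$ is a nonempty proper subset of a minimal contractible set it is not contractible, i.e.\ $e_{S\setminus\{F\}} - 3 f_{S\setminus\{F\}} \geqslant 1$, and substituting gives the master inequality
\[ e_S - 3f_S \;\geqslant\; b_F - 2 \qquad \text{for every face } F. \]
As $b_F \geqslant 0$, this already yields $e_S - 3f_S \geqslant -2$, and combined with $e_S - 3f_S \leqslant 0$ we get $e_S - 3f_S \in \{0,-1,-2\}$ in the multi-face case. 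The single-face case is immediate: a minimal contractible $S$ with $f_S = 1$ has no nonempty proper subset, so it is contractible precisely when $e_S \leqslant 3$; since a face has at least one bounding edge, $e_S \in \{1,2,3\}$, whence $e_S - 3f_S \in \{-2,-1,0\}$. This settles the first assertion.

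For the classification when $f_S = 1$ I would read the three shapes directly off $e_S \in \{1,2,3\}$: a face cycle of length one is a loop, of length two a $2$-cycle, and of length three a $3$-cycle (the graph has no leaves, so no other degenerate possibilities arise). When $f_S \geqslant 2$ I first rule out $e_S - 3f_S = -2$: the master inequality would then force $b_F \leqslant 0$, hence $b_F = 0$ for every face, so $G_S$ would have no boundary edges at all; but any plane graph with at least one edge has its unbounded face bordered by edges, and each such edge lies on exactly one bounded face and is therefore a boundary edge, a contradiction. Hence $e_S - 3f_S \in \{0,-1\}$, as claimed.

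Finally, the boundary-edge count comes from the same inequality rearranged as $b_F \leqslant (e_S - 3f_S) + 2$. I expect the delicate point to be sharpening this to ``at most one boundary edge per face.'' When $e_S - 3f_S = -1$ the estimate gives $b_F \leqslant 1$ at once. The genuinely hard case is $e_S - 3f_S = 0$, where the one-face estimate only delivers $b_F \leqslant 2$, so one must control how the $b_F$ distribute across faces rather than merely bound their sum. Here I would play minimality off against the planar structure, combining the constraints obtained by deleting several faces simultaneously and using both the standing hypothesis that two faces share at most one edge and the fact that each singleton is a non-contractible proper subset, which forces every face to have degree $e_{\{F\}} = \deg F \geqslant 4$. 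This borderline $e_S = 3f_S$ regime, together with the careful topological bookkeeping of how edges and faces change under deletion (bridges and possible disconnection being the traps), is the main obstacle of the argument.
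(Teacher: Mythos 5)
Your single-face-deletion argument is exactly the paper's: the paper likewise removes one face $F$ from a minimal contractible $S$, computes $e_{S\setminus\{F\}}-3f_{S\setminus\{F\}}=(e_S-b_F)-3(f_S-1)$, and plays this against the minimality requirement $e_T-3f_T>0$ for nonempty proper subsets $T$. Your master inequality $e_S-3f_S\geqslant b_F-2$, the loop/$2$-cycle/$3$-cycle classification in the single-face case, and the exclusion of $e_S-3f_S=-2$ when $f_S\geqslant 2$ (via the existence of at least one outer-boundary edge, hence a face with $b_F\geqslant 1$) all coincide with the paper's reasoning, and all of that part of your write-up is correct.

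The one case you honestly flag as unfinished --- proving $b_F\leqslant 1$ for every face when $e_S-3f_S=0$ and $f_S\geqslant 2$ --- you should stop trying to prove, because it is false as stated, and the paper does not prove it either: the paper's Case~2 explicitly reduces to the situation $e_S-3f_S<0$ before deriving the one-boundary-edge bound and never returns to the $e_S-3f_S=0$ alternative. A concrete counterexample is the Morgan--Scott dual graph of Example~\ref{example: Morgan-Scott} (Figure~\ref{figure.3squares}): three $4$-cycles pairwise sharing an edge give $e_G=9=3f_G$, every singleton and every pair of faces satisfies $e_T-3f_T=1>0$, so the full face set is minimal contractible, yet each face has exactly two boundary edges. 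Thus the ``at most one boundary edge'' conclusion can only be attached to the $e_G-3f_G=-1$ alternative (which is your computation $b_F\leqslant(e_S-3f_S)+2$, and is the only form in which the statement is used later, in Lemma~\ref{lemma.min cont cond}); in the $e_S-3f_S=0$ case your bound $b_F\leqslant 2$ is sharp. Two minor cautions about your sketch of the hard case: the hypothesis that two faces share at most one edge is not among the hypotheses of this lemma, so you should not lean on it here; and the degree bound $\deg F\geqslant 4$ only controls the sum $\sum_F b_F\leqslant 2f_S$, which the Morgan--Scott example attains with equality.
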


\begin{proof}
We first find a contractible set of faces $S$ with $e_S - 3f_S \in \{0,-1,-2\}$.  

\smallskip

\noindent
{\bf Case 1:} If $S$ consists of a single face and $e_S - 3f_S \leqslant 0$, then $e_S \leqslant 3$ so the graph $G_S$ must be one of: 
\begin{itemize}
    \item a single vertex and edge from the vertex to itself, with $e_S - 3f_S = -2$;
    \item two vertices with two edges between them, with $e_S - 3f_S = -1$;
    \item three vertices in a $3$-cycle, with $e_S - 3f_S = 0$.
\end{itemize}

So assume that $S$ is a contractible set with at least two faces. If
$e_S-3f_S=0$,  then we are done. We may assume that $e_S-3f_S <0$ and furthermore that $e_T-3f_T>0$  
for all proper subsets $T\subset S$. Otherwise one may replace $S$ by $T$ in the following argument.  Thus we are in the next case.

\smallskip

\noindent
{\bf Case 2:} Suppose that $S$ contains at least two faces. 
We prove by contradiction that there is no face in $S$ with two boundary edges in $G_S$, namely that every face in $S$ has at most one edge 
that is not shared with another face in $S$.  Indeed, suppose there is a face $F$ in $S$ with $2$ boundary edges and let $T$ be the subset 
$S \setminus \{F\}$.  We have 
\[
	e_T-3f_T=(e_S-2)-3(f_S-1)=e_S-3f_S+1.
\]  
But $e_S-3f_S<0$ and so $e_T-3f_T\leqslant 0$.  This contradicts the minimality of $S$.  So every face in $S$ has at most one edge on the boundary 
of $G_S$.  Moreover, let $T$ be a boundary face of $G_S$.  Then the previous equation becomes
\[
	e_T-3f_T=(e_S-1)-3(f_S-1)=e_S-3f_S+2
\]  
which by minimality of $S$ must be positive.  So we have
\[
	e_S - 3f_S < 0 \textup{   and   } e_S - 3f_S + 2 > 0.
\]
In other words $e_S-3f_S = -1$ as desired.  This proves the claim.
\end{proof}

\begin{figure}[h]
\begin{center}
\scalebox{0.75}{\begin{tikzpicture}
\draw[thick] (-2,0) rectangle (3,3);
\draw[thick] (-1,1) rectangle (2,2);
\draw[thick] (4,0) rectangle (9,3);
\draw[thick] (5,1) rectangle (8,2);
\draw[thick] (-2,0) to (-1,1);
\draw[thick] (3,0) to (2,1);
\draw[thick] (3,3) to (2,2);
\draw[thick] (-2,3) to (-1,2);
\draw[thick] (4,0) to (5,1);
\draw[thick, color=black] (4,3) to (5,2);
\draw[thick, color=red] (4,3) to (5,2);
\draw[thick, color=red] (4,3) to (9,3);
\draw[thick, color=red] (4,3) to (4,0);
\draw[thick] (9,0) to (8,1);
\draw[thick] (9,3) to (8,2);
\node at (-1.9,0){\circle*{5}};
\node at (3.1,0){\circle*{5}};
\node at (-1.9,3){\circle*{5}};
\node at (3.1,3){\circle*{5}};
\node at (-0.9,1){\circle*{5}};
\node at (2.1,1){\circle*{5}};
\node at (-0.9,2){\circle*{5}};
\node at (2.1,2){\circle*{5}};
\node at (0,2){\circle*{5}};
\node at (1,2){\circle*{5}};
\node at (4.1,0){\circle*{5}};
\node at (4.1,3){\circle*{5}};
\node at (9.1,3){\circle*{5}};
\node at (5.1,1){\circle*{5}};
\node at (5.1,2){\circle*{5}};
\node at (8.1,1){\circle*{5}};
\node at (8.1,2){\circle*{5}};
\node at (9.1,0){\circle*{5}};
\node at (5.5,3){\circle*{5}};
\node at (7,3){\circle*{5}};

\end{tikzpicture}
}
 \caption{Two different planar embeddings of the same graph.\label{fig:1}}
\end{center}
\end{figure}

Minimal contractibility relies deeply on the choice of embedding in the plane, as demonstrated by the following example.

\begin{example}
Consider the graph in Figure~\ref{fig:1} with two different embeddings into the plane. The set of faces $S$ defined by the left hand embedding is minimal contractible. However, the set of faces $S$ in the right hand embedding is not minimal contractible as it has a subset $T$ of faces with red edges on part of their boundary satisfy $e_T-3f_T=0$. 
\end{example}

Nonetheless, with a slight additional condition, we can show that if the set of faces of a graph $G$ is minimal contractible then either $e_G=f_G$ or $G$ consists of a single face.  The additional condition---that any two (bounded) faces of $G$ share at most one edge---is satisfied trivially when the graph $G$ is dual to a planar graph like a triangulated region.  The next lemma is the key step and the subsequent corollary states the main takeaway.

\begin{lemma}
\label{lemma.min cont cond}
Let $G$ be a finite, planar graph without leaves such that two faces share at most one edge and so that the set of faces of $G$ is itself minimal contractible. If $e_G-3f_G=-1$, 
then $G$ is a 2-cycle consisting of two vertices with two edges between them.
\end{lemma}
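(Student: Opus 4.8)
The plan is to reduce the statement to ruling out the case $f_G \geqslant 2$ and then to contradict a planarity bound applied to the \emph{full} dual graph. First I would dispose of the single-face case: if $G$ has just one face, then Lemma~\ref{lemma.minimal contractible} classifies it as a loop, a $2$-cycle, or a $3$-cycle, with $e_G-3f_G$ equal to $-2$, $-1$, $0$ respectively, so the hypothesis $e_G-3f_G=-1$ forces exactly the $2$-cycle. Thus it suffices to show the hypotheses are incompatible with $f_G\geqslant 2$. I would also first record that $G$ is connected: if it were not, each component has no leaves and hence contains a cycle and at least one bounded face, so each component's face set is a proper subset of faces and, by minimal contractibility, satisfies $e'-3f'>0$; summing over components would give $e_G-3f_G>0$, contradicting $e_G-3f_G=-1$.

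The core of the argument is the full dual graph $G^{\star}$, whose vertices are \emph{all} faces of $G$ (the $f_G$ bounded faces together with the one unbounded face) and whose edges are the edges of $G$, each joining the two faces it separates. Being the dual of a connected plane graph, $G^{\star}$ is planar with $f_G+1$ vertices and $e_G$ edges. I would then argue that under our hypotheses $G^{\star}$ is \emph{simple}. Multiple edges between two distinct bounded-face vertices are excluded because two faces share at most one edge; multiple edges from a bounded-face vertex to the unbounded vertex are excluded because, in the $f_G\geqslant 2$ case of Lemma~\ref{lemma.minimal contractible}, every face of $G$ has at most one boundary edge; and loops are excluded because $G$ is bridgeless, every edge lying on some bounded-face cycle (this is the standing reduction to $G=G_S$ that underlies the contractibility setup).

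With $G^{\star}$ simple and planar on $n=f_G+1\geqslant 3$ vertices, the standard inequality $e\leqslant 3n-6$ for simple planar graphs gives
\[
	e_G \leqslant 3(f_G+1)-6 = 3f_G-3.
\]
This contradicts the hypothesis $e_G = 3f_G-1 > 3f_G-3$. Hence $f_G\geqslant 2$ is impossible, so $f_G=1$, and by the single-face classification above $G$ is the $2$-cycle.

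I expect the only delicate point to be the verification that $G^{\star}$ is genuinely simple, in particular the absence of loops, i.e.\ that $G$ has no bridges. I would handle this by invoking the reduction that every edge of $G$ lies on a bounded face (equivalently $G=G_S$), which immediately forces bridgelessness since a bridge lies on no cycle; if one wished to allow stray bridges, replacing $G$ by $G_S$ only decreases $e_G-3f_G$ and the same bound applies. It is worth emphasizing that minimality enters only through the ``at most one boundary edge per face'' conclusion of Lemma~\ref{lemma.minimal contractible}; the stronger fact that each face is at least a $4$-gon is not needed for this contradiction.
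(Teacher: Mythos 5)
Your proof is correct, but it runs the argument in the planar dual where the paper works in the primal graph, so a comparison is worthwhile. The paper's proof uses the same two structural inputs you do --- every face has at most one boundary edge (from Lemma~\ref{lemma.minimal contractible}) and no two faces share more than one edge --- but uses them to conclude that every vertex of $G$ has degree at least three unless $G$ is the $2$-cycle; it then combines Euler's formula $v_G - e_G + f_G = 1$ with the handshake bound $2e_G \geqslant 3v_G$ and the substitution $e_G = 3f_G - 1$ to reach the contradiction $6f_G - 2 \geqslant 6f_G$. You instead use those two facts to establish simplicity of the full dual (note your dual, which includes the unbounded face, is not the modified $G^\star$ of Section~\ref{section.all edge injective}) and then invoke the textbook bound $e \leqslant 3n-6$. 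By planar duality these are the same computation: ``every vertex of $G$ has degree at least $3$'' is exactly ``every face of the dual has at least three edge-sides,'' which is what drives the $3n-6$ bound. What your packaging buys is a citation to a standard theorem in place of rederiving it, and you are more explicit than the paper about connectivity (needed for Euler's formula in either formulation) and about dispatching the single-face case via Lemma~\ref{lemma.minimal contractible}. One small caution: the parenthetical fallback that replacing $G$ by $G_S$ ``only decreases $e_G-3f_G$ and the same bound applies'' does not quite work as stated, since pushing $e-3f$ below $-1$ destroys the equality you are contradicting; but this is moot under the standing convention --- which the paper's own proof also relies on --- that every edge of $G$ lies on a bounded face cycle, so that bridges (loops in your dual) do not occur.
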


\begin{proof}
If a vertex in $G$ has degree two then either it is on the boundary or two faces of $G$ share more than one edge.  Since no two faces of $G$ share more than one edge, and moreover the set of faces of $G$ is minimal contractible with $e_G-3f_G=-1$, it follows from
Lemma~\ref{lemma.minimal contractible} that no vertex of $G$ has degree two unless $G$ is the 2-cycle consisting of two vertices with two edges between them.
Assume that $G$ is not the 2-cycle. Since $G$ is planar, Euler's formula asserts that
\[
	v_G - e_G +f_G =1,
\]
where $v_G$ is the number of vertices of $G$. Substituting $e_G-3f_G=-1$, we obtain
\[
	v_G = 2f_G.
\]
Furthermore, counting the number of edges by vertex and using that each vertex has at least three edges and each edge is counted by two vertices, we have
\[
	2e_G \geqslant 3v_G = 6f_G,
\]
which yields the contradiction $6f_G-2 \geqslant 6f_G$ when inserting $e_G=3f_G-1$.
\end{proof}

\begin{corollary}
\label{corollary.min contract}
Let $G$ be a finite, planar graph without leaves such that two faces share at most one edge and so that the set of faces of $G$ is itself minimal contractible.
Then $G$ is a single face (loop, 2-cycle or 3-cycle) or $e_G-3f_G=0$.
\end{corollary}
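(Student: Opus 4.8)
The plan is to deduce this corollary directly from the two preceding results, namely Lemma~\ref{lemma.minimal contractible} and Lemma~\ref{lemma.min cont cond}, by splitting on the number of bounded faces of $G$. Since the set of faces of $G$ is by hypothesis itself minimal contractible, both lemmas apply with $S$ taken to be the full set of faces, and the hypotheses (finite, planar, no leaves, two faces sharing at most one edge) match those of Lemma~\ref{lemma.min cont cond} exactly.

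First I would dispose of the case in which $G$ has a single face. Here Lemma~\ref{lemma.minimal contractible} immediately forces that face to be a loop, a $2$-cycle, or a $3$-cycle, which is precisely the first alternative in the conclusion, so nothing further is needed. I would then assume for the remainder that $G$ has at least two faces. In that regime, Lemma~\ref{lemma.minimal contractible} tells us that $e_G - 3f_G \in \{0,-1\}$. If $e_G - 3f_G = 0$ we land immediately in the second alternative of the conclusion and are done.

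The only remaining possibility is $e_G - 3f_G = -1$ with $G$ having at least two faces. The key step---and the one place where the hypothesis that two faces share at most one edge is essential---is to invoke Lemma~\ref{lemma.min cont cond}, which under exactly these hypotheses concludes that $G$ must be the $2$-cycle. But the $2$-cycle has only one bounded face, contradicting the standing assumption that $G$ has at least two faces. Hence this branch cannot occur, and when $G$ has at least two faces we are forced into $e_G - 3f_G = 0$. Combining the single-face and multiface cases yields the corollary.

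I do not expect a genuine obstacle, since the substantive work has already been carried out in the two lemmas and this corollary is essentially a bookkeeping synthesis of them. The one subtlety worth flagging is the apparent overlap between the two conclusions: the $2$-cycle appears both as a permitted single-face graph and as the degenerate output of Lemma~\ref{lemma.min cont cond} in the $e_G - 3f_G = -1$ branch. The argument must make explicit that once we are in the ``at least two faces'' branch the $2$-cycle is excluded \emph{because} it has only one face, so the use of Lemma~\ref{lemma.min cont cond} to rule out $e_G-3f_G=-1$ introduces no circularity.
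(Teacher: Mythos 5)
Your proposal is correct and is exactly the deduction the paper intends: its proof of this corollary is the one-line statement that it ``follows directly'' from Lemma~\ref{lemma.minimal contractible} and Lemma~\ref{lemma.min cont cond}, and your case split (single face via the first lemma; at least two faces giving $e_G-3f_G\in\{0,-1\}$, with the $-1$ branch excluded because the second lemma would force $G$ to be a $2$-cycle, which has only one face) is the intended bookkeeping made explicit. No gaps.
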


\begin{proof}
This follows directly from Lemmas~\ref{lemma.minimal contractible} and~\ref{lemma.min cont cond}.
\end{proof}

One case of particular interest is when the graph $G_{\Delta}$ is dual to a planar triangulation $\Delta$.  In this case, we can use Corollary~\ref{corollary.min contract} to simplify considerably and show that if $G_{\Delta}$ is contractible then $\Delta$ is the subdivision of a single triangle.

\begin{corollary}\label{corollary: contractible triangulation means subdivided triangle}
Suppose that $\Delta$ is a triangulation of a region in the plane for which the dual graph $G_{\Delta}$ is contractible.  Then $\Delta$ is a subdivided triangle, meaning that the boundary of $\Delta$ as a planar graph is a triangle.
\end{corollary}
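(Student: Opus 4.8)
The plan is to translate the contractibility hypothesis on $G_{\Delta}$ into a purely numerical condition on the triangulation $\Delta$ itself via Euler's formula, and then read off directly that the boundary of $\Delta$ must be a triangle. Throughout I assume, in keeping with the standing hypotheses of the paper, that the region is simply connected (no holes), so that the boundary of $\Delta$ is a single simple cycle.

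First I would fix notation for $\Delta$: let $V_i, V_b$ be the numbers of interior and boundary vertices, $E_i, E_b$ the numbers of interior and boundary edges, and $T$ the number of triangles. Because $\Delta$ triangulates a disk, its boundary is one cycle, so $E_b = V_b$. The construction of the dual graph identifies the bounded faces of $G_{\Delta}$ with the interior vertices of $\Delta$ and the edges of $G_{\Delta}$ with the interior edges of $\Delta$; concretely, the face of $G_{\Delta}$ around an interior vertex $v$ is bounded by the dual edges of the (necessarily interior) edges of $\Delta$ incident to $v$. Hence $f_{G_{\Delta}} = V_i$ and $e_{G_{\Delta}} = E_i$.

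The key step is to derive the identity $e_{G_{\Delta}} - 3 f_{G_{\Delta}} = V_b - 3$. Counting edge--triangle incidences (each triangle has three edges, each interior edge lies on two triangles, each boundary edge on one) gives $3T = 2E_i + E_b$, and Euler's formula applied to the planar $1$-skeleton of $\Delta$ gives $(V_i + V_b) - (E_i + E_b) + (T+1) = 2$. Substituting $E_b = V_b$ and eliminating $T$ between these two relations yields $E_i = 3V_i + V_b - 3$, i.e.
\[
	e_{G_{\Delta}} - 3 f_{G_{\Delta}} = E_i - 3V_i = V_b - 3.
\]

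The conclusion is then immediate. By Definition~\ref{definition.contractible}, $G_{\Delta}$ being contractible means $e_{G_{\Delta}} - 3 f_{G_{\Delta}} \leqslant 0$, so the identity forces $V_b \leqslant 3$; since every triangulated region has at least three boundary vertices, we get $V_b = 3$. A boundary cycle on three vertices is a triangle, so $\Delta$ is a subdivided triangle, as claimed. I would also note as a sanity check that the identity shows $e_{G_{\Delta}} - 3 f_{G_{\Delta}} = V_b - 3 \geqslant 0$ for \emph{every} dual of a triangulated disk, so contractibility in fact forces equality $e_{G_{\Delta}} = 3 f_{G_{\Delta}}$, which dovetails with the dichotomy of Corollary~\ref{corollary.min contract} (single face, or $e_G - 3f_G = 0$). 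The main obstacle is purely bookkeeping: correctly matching faces and edges of $G_{\Delta}$ to interior vertices and edges of $\Delta$, justifying $E_b = V_b$ from simple-connectedness, and checking that the degenerate case of a single triangle (where $G_{\Delta}$ is a single vertex with no bounded faces) is consistent with the formula.
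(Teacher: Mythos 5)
Your proof is correct, and it takes a genuinely more direct route than the paper's. The paper first treats the case where the full face set of $G_{\Delta}$ is \emph{minimal} contractible, invoking Corollary~\ref{corollary.min contract} to upgrade the inequality $e_{G_{\Delta}}-3f_{G_{\Delta}}\leqslant 0$ to the equality $e_{G_{\Delta}}=3f_{G_{\Delta}}$ (equivalently $e_{\mathsf{int}}=3v_{\mathsf{int}}$ in $\Delta$), and only then runs essentially your Euler-formula computation; the general contractible case is then handled by an induction that repeatedly contracts minimal contractible subsets and checks that contraction preserves both contractibility and the boundary of $\Delta$. You bypass all of that by proving the identity $e_{G_{\Delta}}-3f_{G_{\Delta}}=V_b-3$ for an \emph{arbitrary} triangulated disk and reading the conclusion straight off the defining inequality $e_{G_{\Delta}}-3f_{G_{\Delta}}\leqslant 0$; this needs neither Corollary~\ref{corollary.min contract} (so no appeal to the ``two faces share at most one edge'' analysis of minimal contractibility) nor any induction, and your observation that the identity forces $e_{G_{\Delta}}-3f_{G_{\Delta}}=V_b-3\geqslant 0$ for every triangulated disk is a nice bonus consistent with the dichotomy of Corollary~\ref{corollary.min contract}. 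What the paper's longer route buys is information your argument does not directly provide: it locates the minimal contractible pieces of a larger $G_{\Delta}$ as duals of subdivided triangles sitting inside $\Delta$, which is what the contraction algorithm and Corollary~\ref{corollary: special polynomials for triangulations} actually use downstream. Your bookkeeping (the identifications $f_{G_{\Delta}}=V_i$, $e_{G_{\Delta}}=E_i$, the incidence count $3T=2E_i+E_b$, and $E_b=V_b$ for a disk) matches the paper's and is sound.
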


\begin{proof}
We start by assuming that $G_{\Delta}$ is minimal contractible.  A triangle cannot share an edge with itself, nor can two triangles share more than one edge in a planar triangulation. Corollary~\ref{corollary.min contract} says $G_{\Delta}$ has three times as many edges as faces.  By construction of the dual graph, this means that in $\Delta$ the number of interior edges $e_{\mathsf{int}}$ and interior vertices $v_{\mathsf{int}}$ are related by
\[
	e_{\mathsf{int}} = 3v_{\mathsf{int}}.
\]
In addition, when $\Delta$ is considered as a planar graph, every face has three edges.  This counts interior edges twice and boundary edges once so the number of faces and edges in $\Delta$ satisfy:
\[
	3f = 2e_{\mathsf{int}} + e_b.
\]
Euler's formula for planar graphs holds for $\Delta$ as well, namely:
\[
	(v_{\mathsf{int}}+v_b) - (e_{\mathsf{int}}+e_b)+f=1,
\]
where $v_b$ refers to the number of boundary vertices.  Substituting for $v_{\mathsf{int}}$ and $f$ from the previous equations and then simplifying, we obtain
\[
	\frac{e_{\mathsf{int}}}{3} + v_{b} - e_{\mathsf{int}} - e_b + \frac{2e_{\mathsf{int}}}{3} + \frac{e_b}{3} = v_b - \frac{2e_b}{3} = 1.
\]
But the boundary of $\Delta$ is a cycle so $v_b= e_b$.  We conclude that $v_b = 3$ and hence the boundary of $\Delta$ is a triangle, as claimed.

Now suppose that $\Delta$ is a triangulation for which $G_{\Delta}$ is contractible.  Let $S$ be a minimal contractible subset of faces in $G_{\Delta}$ 
and consider the subtriangulation $\Delta_S$ of $\Delta$ dual to the subgraph $G_S$.  By our previous argument, the boundary of $\Delta_S$ is a 
triangle.  Erasing the edges and vertices interior to $\Delta_S$ does not change the triangulation $\Delta$ outside of $\Delta_S$.  In particular, the plane 
partition $\Delta'$ that we obtain is still a triangulation.  Inside $G_{\Delta}$, this operation erases all interior edges in $S$ and replaces all interior 
vertices in $S$ with one single new vertex $\widetilde{v}$ together with edges $u\widetilde{v}$ if and only if $uv$ was an edge in $G_{\Delta}$ for 
a vertex $v$ on a face in $S$ and a vertex $u$ disjoint from $S$.  In other words, the graph $G_{\Delta'}$ is exactly the same as the graph obtained 
from $G_{\Delta}$ by contracting the faces in $S$ to a vertex.  Hence $G_{\Delta'}$ is still contractible.  The triangulation $\Delta$ is finite so the claim 
holds by induction.
\end{proof}

The special case when $G_{\Delta}$ is dual to a triangulation satisfies an additional unusual property.

\begin{corollary} \label{corollary: special polynomials for triangulations}
If $\Delta$ is a plane triangulation and $G_{\Delta}$ its dual graph, then:
\begin{itemize}
    \item no two minimal contractible subgraphs of $G_{\Delta}$ share an edge, and
    \item if $G_S$ is a minimal contractible subgraph of $G_{\Delta}$ and $\Delta_S \subseteq \Delta$ is the subdivision of a single triangle dual to $G_S$ in $\Delta$ then the planar triangulation $\Delta'$ obtained from $\Delta$ by erasing the subdivision inside $\Delta_S$ has dual graph $G/G_S$.
\end{itemize} 
\end{corollary}

\begin{proof}
Suppose that $S_1$ and $S_2$ are two different minimal contractible subsets of faces in $G_{\Delta}$.  We will show that $S_1$ and $S_2$ do not 
share an edge. Corollary~\ref{corollary: contractible triangulation means subdivided triangle} says that all minimal contractible subgraphs in $G_{\Delta}$ 
arise as the dual graph to a subdivision of a single triangle in $\Delta$. Denote these by $\Delta_1$ and $\Delta_2$ respectively, meaning the faces in 
$S_i$ correspond to the interior vertices of $\Delta_i$. Denote the boundary of each $\Delta_i$ by $\partial \Delta_i$ and note that $\partial \Delta_i$ is a 
three-cycle.  

We will show that $\Delta_1$ and $\Delta_2$ do not share a face, which implies that $G_{S_1}$ and $G_{S_2}$ do not share an edge.  Suppose the 
vertices of the boundary triangle $\partial \Delta_1$ are $v_1, v_2, v_3$. Every triangulation in the plane is a simplicial decomposition so if 
$(\partial \Delta_1) \cap \Delta_2$ contains an edge $\{v_iv_j\}$ then it contains both endpoints $v_i$ and $v_j$.  Conversely, the triangle $\Delta_1$ 
is convex so if $(\partial \Delta_1) \cap \Delta_2$ contains two vertices $v_i$ and $v_j$ then it also contains the edge $v_iv_j$ between them.  This 
gives the possibilities in the first column of the following table:
\[\begin{array}{|l|l|}
\cline{1-2} (\partial \Delta_1) \cap \Delta_2 & \textup{ configuration of } \Delta_1 \textup{ and } \Delta_2 \\
\cline{1-2} \emptyset & \textup{ disjoint } \\
\{v_i\} & \textup{ intersect at a single vertex like a bowtie}\\
\{v_i, v_j, v_iv_j\} & \textup{ intersect along a single edge to form a square with a diagonal} \\ 
\partial \Delta_1 & \textup{ not allowed}\\
\hline \end{array}\]
The boundary $\partial \Delta_2$ is a simple closed curve so by the Jordan curve theorem defines an inside and an outside.  The boundary 
$\partial \Delta_1$ enters and exits $\Delta_2$ at the boundary $\partial \Delta_2$.  Since $G_{S_1}$ and $G_{S_2}$ are distinct minimal 
contractible subgraphs, neither of $\Delta_1, \Delta_2$ contains the other.  Comparing with the possible intersections $(\partial \Delta_1) \cap \Delta_2$ 
above means that $\Delta_1$ and $\Delta_2$ are in one of the configurations in the second column of the table above. In none of these cases do the 
dual graphs $G_{S_1}$ and $G_{S_2}$ share an edge (nor even a vertex).

Finally, suppose $G_S$ is a minimal contractible subgraph.  By Corollary~\ref{corollary: contractible triangulation means subdivided triangle}, we 
know $G_S$ is dual to a subdivided triangle $\Delta_S$ in $\Delta$. Let $\Delta'$ be the triangulation obtained from $\Delta$ by erasing the 
triangulation interior to $\Delta_S$.  The dual graph $G_{\Delta'}$ has one vertex $v'$ where $G_{\Delta}$ has a subgraph $G_{S}$ and three edges 
incident to $v'$ to the vertices corresponding to the three faces sharing an edge with the boundary of $\Delta_S$ with exactly the edge-labels as in 
$G_{\Delta}$.  In other words, the dual graph $G_{\Delta'}$ is exactly $G_{\Delta}/G_S$.
\end{proof}

Section~\ref{section: open questions} poses a question about how these results change on surfaces of higher genus.

\section{Edge-injective functions and generic edge-labelings: first properties}

In this section, we use both combinatorial and topological tools for preliminary calculations of $\rank M^{\mathsf{ext}}$.
We begin by defining generic edge labels for a graph $G$ in Section~\ref{section.generic},
which basically means that the rank of $M^{\mathsf{ext}}$ is maximal. In Section~\ref{section.edge injective}, we then analyze the determinant of the maximal 
square submatrix of $M^{\mathsf{ext}}$ in terms of edge-injective functions on the graph $G$, which are assignments of directions of the edges
in the dual graph $G^\star$ satisfying certain properties. We then give essential decomposition results showing how the notion of contractibility 
from Section~\ref{section.contractible} can be used with edge-injective functions to reduce our problem to computing the rank of $M^{\mathsf{ext}}$ for 
graphs that do not contain any contractible subsets of faces.

\subsection{Generic edge labels: definition and examples}
\label{section.generic}

We assume in this section that $G$ is a finite, directed, planar graph without leaves.
Let $e_1,e_2,\ldots, e_{e_G}$ be the edges in $G$ and $\ell_i = (x+a_iy)^2$ the label
for edge $e_i$ for $1\leqslant i \leqslant e_G$. The entries in $M^{\mathsf{ext}}$ are in $\{0,\pm 1,\pm a_i,\pm a_i^2 \mid 1\leqslant i \leqslant e_G\}$. 
Hence for any maximal square submatrix $N$ of $M^{\mathsf{ext}}$, the function $\det N$ is polynomial in the variables $a_i$ for 
$1\leqslant i \leqslant e_G$. We now define  \defn{generic edge labels}, which is slightly different from \emph{maximal rank}.

\begin{definition}
\label{definition.generic edge labels}
We call the edge labels $\ell_i = (x+a_iy)^2$ for $1\leqslant i \leqslant e_G$ in a labeled, directed graph $G$ \defn{generic} if there exists a 
$d \times d$ submatrix $N$ of $M^{\mathsf{ext}}$ with $d = \min\{e_G,3f_G\}$ such that
$\det N$ is not identically zero as a polynomial $p(a_1,\ldots,a_{e_G})$ in the variables $a_1, a_2, \ldots, a_{e_G}$.  The edge labels are in 
\defn{special position} if the values for the $a_i$'s are roots of the polynomial $p(a_1,\ldots,a_{e_G})$.
\end{definition}

Throughout this paper, we assume the $a_i$'s are \emph{not} in special position, that is, if the polynomial $p(a_1,\ldots,a_{e_G})$ is 
not the zero polynomial, then the $a_i$'s are not roots of the polynomial.

Note that unless the polynomial in the $a_i$'s given by $\det N$ is identically zero, the condition $\det N=0$ defines an algebraic 
hypersurface. In geometric terms, this means  the corresponding triangulation is in ``special position'' (which agrees with the above definition of
special position as the roots of the nonzero polynomial).

For generic edge labels not in special position we have 
\begin{equation}
\label{equation.generic rank}
	\rank M^{\mathsf{ext}} = \min\{e_G, 3f_G\}.
\end{equation}
In the next sections, we analyze graphs with generic edge labels.  First, we illustrate \emph{generic} and \emph{special position}.

\begin{example}
Suppose $G$ is a triangle with edges labeled $(x+a_1y)^2, (x+a_2y)^2, (x+a_3y)^2$ directed clockwise. In this case the matrix 
$M^{\mathsf{ext}}$ is square so that there is only one choice for $N$.  We have
\[
	\det N = \det M^{\mathsf{ext}} =  (a_1-a_2)(a_2-a_3)(a_1-a_3).
\]
Hence the edge labels are generic in this case since the polynomial is not identically zero for distinct $a_1, a_2, a_3$.
If any two of the $a_i$ coincide, then $\det N = 0$ and the edge-labeling is in special position.
\end{example}

\begin{example} 
\label{example: Morgan-Scott}
A more complicated example is the case of three 4-cycles, each pair of which shares an edge, and that share an interior vertex in common as shown
in Figure~\ref{figure.3squares}. This is the dual of the triangulation of Figure~\ref{figure.triangulation}. Assume that the edge labels are $\ell(e_i)=(x+a_iy)^2$ 
for $1\leqslant i \leqslant 9$.

\begin{figure}[h]
\begin{center}
\begin{tikzpicture}[xscale=1.5, yscale=0.6]
\draw[thick] (0,2) to (0,0);
\draw[thick] (0,0) to (1.5,-1.32);
\draw[thick] (0,0) to (-1.5,-1.32);
\draw[thick] (-1.5,0.68) to (-1.5,-1.32);
\draw[thick] (1.5,0.68) to (1.5,-1.32);
\draw[thick] (0,2) to (1.5,0.68);
\draw[thick] (0,2) to (-1.5,0.68);
\draw[thick] (0,-2.64) to (1.5,-1.32);
\draw[thick] (0,-2.64) to (-1.5,-1.32);
\node at (-0.85,1.7) {$e_1$};
\node at (-1.7,-0.2){$e_2$};
\node at (-0.2,0.7){$e_3$};
\node at (-0.65,-0.95){$e_4$};
\node at (-0.2,-2){$e_5$};
\node at (1.1,-2.1){$e_6$};
\node at (1,-0.5){$e_7$};
\node at (0.6,1.9){$e_8$};
\node at (1.7,0){$e_9$};
\node at (0,)[anchor=south east]{};
\node at (0,2){$\bullet$};
\node at (0,0){$\bullet$};
\node at (1.5,-1.32){$\bullet$};
\node at (-1.5,-1.32){$\bullet$};
\node at (0,-2.64){$\bullet$};
\node at  (-1.5,0.68){$\bullet$};
\node at (1.5,0.68){$\bullet$};
\end{tikzpicture}
\end{center}
\caption{Three 4-cycles in Example~\ref{example: Morgan-Scott} and dual to the
triangulation in Figure~\ref{figure.triangulation}.
\label{figure.3squares}}
\end{figure}

In this example, $M^{\mathsf{ext}}$ is a square matrix and we have (independent of any directions on edges)
\[
	\det N = \det M^{\mathsf{ext}}=(a_1-a_2)(a_5-a_6)(a_8-a_9)P(a_1,\ldots,a_9),
\]
where $P=(a_2-a_3)(a_3-a_1)(a_4-a_5)(a_6-a_4)(a_7-a_8)(a_9-a_7)-(a_2-a_4)(a_4-a_1)(a_6-a_7)(a_7-a_5)(a_3-a_8)(a_9-a_3)$ is a polynomial in 
variables $a_1,\ldots,a_9$. Here is one possible set of conditions under which this determinant is non-zero and hence the edge labels are 
not in special position:
\begin{itemize}
    \item $a_1\neq a_2,\ a_5\neq a_6,\ a_8\neq a_9$;
    \item $(a_3,a_4)\notin\{a_1,a_2,a_8,a_9\}\times\{a_1,a_2, a_5,a_6\} $;
    \item $(a_3,a_7)\notin\{a_1,a_2,a_8,a_9\}\times\{a_5,a_6,a_8,a_9\} $;
    \item $(a_4,a_7)\notin\{a_1,a_2,a_5,a_6\}\times\{a_5,a_6, a_8,a_9\} $.
\end{itemize}
\end{example}

\begin{example}
\label{non-generic graph}
An example of a non-generic graph is given in Figure \ref{figure.non-generic}.
\begin{figure}[h]
\begin{center}
\begin{tikzpicture}[xscale=2, yscale=0.5]
\draw[thick] (0,-2) to (2,-2);
\draw[thick] (0,2) to (2,2);
\draw[thick] (0,0) to (0,2);
\draw[thick] (0,0) to (0,-2);
\draw[thick] (2,0) to (2,-2);
\draw[thick] (2,0) to (2,2);
\draw[thick] plot [smooth cycle] coordinates {(0,0) (1,0.5) (2,0) (1,-0.5)};
\node at (1,2.35) {$e_9$};
\node at (1,0.85){$e_1$};
\node at (1,-0.85){$e_2$};
\node at (-0.2,-1){$e_3$};
\node at (-0.2,1){$e_7$};
\node at (2.2,-1){$e_4$};
\node at (2.2,1){$e_8$};
\node at (0.5,-2.4){$e_5$};
\node at (1,-2){$\bullet$};
\node at (1.5,-2.4){$e_6$};
\node at (0,-2){$\bullet$};
\node at (2,-2){$\bullet$};
\node at (0,2){$\bullet$};
\node at (0,-2){$\bullet$};
\node at (0,0){$\bullet$};
\node at (2,0){$\bullet$};
\node at (2,2){$\bullet$};
\end{tikzpicture}
\end{center}
\caption{Non-generic graph
discussed in Example~\ref{non-generic graph}.
\label{figure.non-generic}}
\end{figure}
In this case, the determinant of $M^{\mathsf{ext}}=N$ is zero because the three rows corresponding to the face bounded by $e_1, e_2$ have only 
two nonzero columns. Thus the graph has no generic edge labels with our definition of the term. However, note that the subgraph $G'$ consisting 
just of the edges $e_1$ and $e_2$ has a generic edge labeling, since $\min\{e_{G'},3f_{G'}\}$ 
is the rank of the corresponding cycle basis matrix in that case. 

However, it is still possible to find edge labels of the graph in Figure~\ref{figure.non-generic} for which $M^{\mathsf{ext}}$ is maximal rank.  For instance, $\rank M^{\mathsf{ext}}=8$ for edge labelings $\ell$ with distinct edge labels, and thus $\dim \mathsf{Spl}_2(G,\ell; v_0) = 1.$ 

Despite the fact that this graph has no generic edge labels, the algorithm we give in Section~\ref{section.algorithm} computes the dimension
of the graph's splines.
\end{example}

\subsubsection{The case $e_G \leqslant 3f_G$}
\label{section.trivial spline}

We generalize the example of Figure~\ref{figure.non-generic} as follows.  

\begin{lemma}
Suppose $G$ is a graph with $e_G \leqslant 3f_G$ and $\ell$ is a generic edge-labeling, as defined in Definition~\ref{definition.generic edge labels}.  Then
\[ \dim \mathsf{Spl}_2(G,\ell; v_0) = 0.\]
\end{lemma}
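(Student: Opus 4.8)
The plan is to read the conclusion off directly from the two rank statements already established, so the proof is essentially a one-line computation once the hypotheses are unpacked correctly. First I would observe that the assumption $e_G \leqslant 3f_G$ forces $\min\{e_G, 3f_G\} = e_G$. Next, since $\ell$ is a generic edge-labeling and, by the standing convention in force throughout the paper, the values $a_i$ are not in special position, I would invoke Equation~\eqref{equation.generic rank} to conclude
\[
	\rank M^{\mathsf{ext}} = \min\{e_G, 3f_G\} = e_G.
\]

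The second and final step is to substitute this into Theorem~\ref{theorem.spline rank}, which gives the dimension of the splines as $e_G - \rank M^{\mathsf{ext}}$. With $\rank M^{\mathsf{ext}} = e_G$ this yields $\dim \mathsf{Spl}_2(G,\ell; v_0) = e_G - e_G = 0$, as claimed.

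Because both ingredients are proved earlier, there is no real obstacle to overcome; the only point that genuinely requires care is the logical interplay between the two halves of Definition~\ref{definition.generic edge labels}, and I would flag it explicitly. Genericity by itself only produces a maximal square submatrix $N$ of size $d = \min\{e_G,3f_G\}$ whose determinant is a \emph{nonzero} polynomial $p(a_1,\ldots,a_{e_G})$; it is the additional standing hypothesis that the $a_i$ avoid the special-position locus (the vanishing set of $p$) that makes $\det N$ actually nonzero at these values, hence $\rank M^{\mathsf{ext}} \geqslant d$. Combined with the trivial bound $\rank M^{\mathsf{ext}} \leqslant \min\{3f_G, e_G\} = d$ coming from the shape of $M^{\mathsf{ext}}$, this forces equality and is precisely the content of Equation~\eqref{equation.generic rank}. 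Making this dependence explicit shows the reader that no separate argument about the independence of the face-cycle equations is needed beyond what the generic-rank equation already encodes.
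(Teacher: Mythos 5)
Your proof is correct and follows essentially the same route as the paper's: note that $e_G \leqslant 3f_G$ gives $\rank M^{\mathsf{ext}} = e_G$ for generic labels (not in special position) via Equation~\eqref{equation.generic rank}, then apply Theorem~\ref{theorem.spline rank}. Your explicit flagging of the role of the standing non-special-position assumption is a reasonable clarification but does not change the argument.
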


\begin{proof}
Since $e_G \leqslant 3f_G$ we have $\mathsf{rank} M^{\mathsf{ext}} \leqslant e_G$. In particular, for edge labels that are generic as in 
Definition~\ref{definition.generic edge labels}, we have $\rank M^{\mathsf{ext}}=e_G$.  The result follows from Theorem~\ref{theorem.spline rank}.
\end{proof}

\subsection{Determinant conditions when $e_G \geqslant 3f_G$}
\label{section.edge injective}

In this section we assume that $G$ is a finite, directed, planar graph without leaves. For convenience, we set $f:= f_G$.  We also assume that 
$e_G \geqslant 3f_G$ since we addressed the trivial case $e_G \leqslant 3f_G$ in Section~\ref{section.trivial spline}.

To determine $\rank M^{\mathsf{ext}}$, we compute the determinant of square submatrices of $M^{\mathsf{ext}}$ obtained by  deleting $e_G-3f_G$ columns. 
Let $N=(n_{i,j})$ be any $3f \times 3f$ square matrix. Then
\begin{equation}
\label{equation.determine}
	\det N = \sum_{\sigma \in S_{3f}} (-1)^\sigma n_{1,\sigma(1)} n_{2,\sigma(2)} \cdots n_{3f,\sigma(3f)},
\end{equation}
where $S_{3f}$ is the group of permutations on $3f$ elements.
We reinterpret this sum in terms of edges and faces.  The summand $n_{1,\sigma(1)} n_{2,\sigma(2)} \cdots n_{3f,\sigma(3f)}$ is identically 
zero unless there is a {\em nonzero} entry $n_{i,\sigma(i)}$ in each row $i$.
Each column corresponds to an edge and each face appears as exactly three rows with columns of forms $\pm 1, \pm a_i, \pm a_i^2$.  The permutation 
$\sigma$ chooses a distinct edge $\sigma(i)$ for each row $i$. So $\sigma$ 
assigns to each face a triple of edges, none of which are associated to any other face.

We will show that given any nonzero term in~\eqref{equation.determine}, there in fact exists a function
\[
	\varphi \colon \mathcal{F} \to \mathcal{E}_3,
\]
where $\mathcal{E}_3$ is the collection of {\em unordered} triples of edges such that
\begin{itemize}
    \item for each face $F$ all edges in $\varphi(F)$ are on the boundary of $F$ and
    \item $\varphi(F) \cap \varphi(G) = \emptyset$ for $F,G \in \mathcal{F}$ for $F\neq G$.
\end{itemize}
We call such a map \defn{edge-injective}.  Any edge-injective function $\varphi$ defines a square matrix $N_{\varphi}$ by restricting $M^{\mathsf{ext}}$ to 
the columns indexed by the image of $\varphi$. We use the conventions that
\begin{itemize}
\item columns are ordered in $N_{\varphi}$ according to their order within $M^{\mathsf{ext}}$ and
    \item rows associated to each face are adjacent to each other and ordered by 
increasing exponent, namely $\pm 1, \pm a_i, \pm a_i^2$.
\end{itemize}  

The following observation is key to our analysis of the matrices $N_{\varphi}$.

\begin{lemma}
\label{lemma.edge injective}
Fix an edge-injective function $\varphi \colon \mathcal{F} \to \mathcal{E}_3$ and let $N_{\varphi}$ be the associated square submatrix of $M^{\mathsf{ext}}$.  
Let $F$ be a face, $i$ be any row in $N_{\varphi}$ corresponding to $F$, and $j$ be the column corresponding to any $e \in \varphi(F)$. Then the entry 
$n_{i,j}$ is nonzero.
\end{lemma}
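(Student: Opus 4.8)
The plan is to unpack the definitions and check that the relevant matrix entry is forced to be nonzero by the construction of $\varphi$ and $N_{\varphi}$. First I would recall what the three rows associated to a face $F$ look like in $M^{\mathsf{ext}}$: by Definition~\ref{definition.extended cycle basis matrix}, these rows have support exactly on the columns indexed by the edges on the boundary cycle of $F$, and in such a column (say for edge $e$ with label $\ell(e)=(x+a_ey)^2$ and sign $\varepsilon_e \in \{+1,-1\}$ recording orientation) the three entries are $\varepsilon_e$, $\varepsilon_e a_e$, and $\varepsilon_e a_e^2$, as displayed in~\eqref{equation.extended}.

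Next I would use the defining property of an edge-injective function: since $e \in \varphi(F)$, the first bullet in the definition guarantees that $e$ lies on the boundary of $F$. Therefore the column $j$ corresponding to $e$ is in the support of the three rows associated to $F$, so all three entries of that column within the $F$-block are nonzero. Concretely, if $i$ is the first, second, or third row for $F$, then $n_{i,j}$ equals $\varepsilon_e$, $\varepsilon_e a_e$, or $\varepsilon_e a_e^2$ respectively.

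Finally I would observe that none of these three expressions can vanish. The sign $\varepsilon_e$ is $\pm 1$, so the first-row entry is nonzero; and since the edge label $\ell(e)=(x+a_ey)^2$ is genuinely a degree-two polynomial (we have arranged in Theorem~\ref{theorem: simplifying labeling} that the coefficient of $x$ is $1$, so the expression $x+a_ey$ is nonzero as a polynomial regardless of the value of $a_e$), the quantities $\varepsilon_e a_e$ and $\varepsilon_e a_e^2$ are treated as the variable entries of the matrix and are nonzero as entries of $M^{\mathsf{ext}}$; here it is cleanest to note that $n_{i,j}$ is, as an entry of the symbolic matrix, one of $\pm 1, \pm a_e, \pm a_e^2$, each of which is a nonzero polynomial (the first two obviously, and $a_e^2$ as a monomial), hence a nonzero entry. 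This completes the verification, and the statement follows directly.

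I do not expect any serious obstacle here, since the lemma is essentially a bookkeeping statement that re-expresses the support condition built into the definition of edge-injectivity. The only point requiring a moment of care is the interpretation of ``nonzero'': the entries $a_e$ and $a_e^2$ should be read as nonzero \emph{polynomial} entries of the symbolic matrix $M^{\mathsf{ext}}$ rather than as specialized scalars, which is consistent with the surrounding discussion (see Definition~\ref{definition.generic edge labels}, where $\det N$ is analyzed as a polynomial in the $a_i$). I would make this reading explicit to avoid any confusion with the special-position case where a particular numerical $a_e$ might happen to equal $0$.
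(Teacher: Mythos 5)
Your proposal is correct and matches the paper's own proof, which likewise just observes that the column for an edge $e\in\varphi(F)$ has entries $\pm 1,\pm a_e,\pm a_e^2$ in the three rows for $F$, each of which is nonzero. Your added remark that the entries $a_e$ and $a_e^2$ should be read symbolically (as nonzero polynomials rather than specialized scalars) is a reasonable clarification of a point the paper leaves implicit, but it does not change the argument.
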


\begin{proof} 
Since $j$ is the column associated to an edge $e$ on face $F$, it has entries $\pm 1, \pm a_j, \pm a_j^2$ in the three rows associated to $F$.  
Thus $n_{i,j}$ is one of the entries $\pm 1, \pm a_j, \pm a_j^2$ and so is nonzero.
\end{proof}

We now show that the determinant of an arbitrary square $3f \times 3f$ submatrix $N$ of $M$ actually decomposes into a sum over the edge-injective 
functions $\varphi$ with $N = N_{\varphi}$.  In addition, each term in this sum is a product of determinants of smaller submatrices.

More precisely, we have the following.

\begin{proposition} 
\label{proposition.factoring det into edge-injective}
Suppose that $N$ is any square $3k \times 3k$ submatrix of $M^{\mathsf{ext}}$ obtained from restricting to a subset of columns and a subset of faces.
For each face $F$ all three rows consisting of values $\pm 1, \pm a_j, \pm a_j^2$ are selected. Then
\[
	\det N = \sum_{\substack{\varphi \colon \mathcal{F} \to \mathcal{E}_3\\ \text{edge-injective}\\\text{with }N_{\varphi}=N}}  
	\prod_{F \in \mathcal{F}} \det N_{F,\varphi(F)},
\]
where $N_{F,\varphi(F)}$ is the submatrix of $N_{\varphi}$ with the 3 rows corresponding to $F$ and columns indexed by the edges in $\varphi(F)$.
\end{proposition}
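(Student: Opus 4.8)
The plan is to start from the Leibniz expansion~\eqref{equation.determine} of $\det N$ over $\sigma \in S_{3k}$ and to organize its nonzero terms according to the edge-injective function each induces. First I would observe, exactly as in the discussion preceding the proposition, that a summand $(-1)^{\sigma}\,n_{1,\sigma(1)}\cdots n_{3k,\sigma(3k)}$ can be nonzero only if every factor $n_{i,\sigma(i)}$ is nonzero. Grouping the $3k$ rows into the $k$ blocks of three rows coming from the selected faces, the entries in the block of a face $F$ are nonzero precisely in the columns indexed by edges on the boundary of $F$. Hence a nonzero summand forces $\sigma$ to send the three rows of each $F$ to three distinct columns whose edges lie on $\partial F$, and since $\sigma$ is a bijection these triples are pairwise disjoint across faces. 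This is exactly the data of an edge-injective function $\varphi\colon \mathcal{F}\to\mathcal{E}_3$, and because $N$ has $3k$ columns the image of $\varphi$ exhausts them, so $N_{\varphi}=N$. Conversely, by Lemma~\ref{lemma.edge injective} every such $\varphi$ does produce nonzero entries in each of the relevant positions, so no admissible $\varphi$ is spuriously excluded.

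Next I would fix one edge-injective $\varphi$ with $N_{\varphi}=N$ and collect all permutations that induce it. These are precisely the $\sigma$ that map the three-row block of each face $F$ bijectively onto the three columns $\varphi(F)$; they are parametrized by an independent choice, for each $F$, of a bijection $\beta_F$ from the rows of $F$ to $\varphi(F)$, i.e. by an element of $\prod_{F} S_3$. For each such $\sigma$ the entry product factors as $\prod_{F} \bigl(\text{the product of the three entries of } N_{F,\varphi(F)} \text{ selected by } \beta_F\bigr)$. Summing over the $\beta_F$ and comparing with the Leibniz expansions of the $3\times 3$ determinants $\det N_{F,\varphi(F)}$ identifies the $\varphi$-contribution to $\det N$ with $\prod_{F} \det N_{F,\varphi(F)}$, up to a single sign $\epsilon(\varphi)$ that depends only on $\varphi$ and not on the individual $\beta_F$. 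Summing over all admissible $\varphi$ then assembles the claimed formula.

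The main obstacle is the sign bookkeeping in this last step, which is really an instance of the generalized Laplace expansion along the row blocks. Writing $(-1)^{\sigma}=\epsilon(\varphi)\prod_{F}(-1)^{\beta_F}$, where $(-1)^{\beta_F}$ denotes the sign of $\beta_F$ relative to the chosen orderings of the rows of $F$ and of the columns $\varphi(F)$, one checks that $\epsilon(\varphi)$ is the sign of the single reference permutation sending the row blocks, in order, to the columns $\varphi(F_1),\varphi(F_2),\ldots$ each listed in increasing order. The delicate point is to verify, under the stated conventions that rows are grouped by face while columns retain their order in $M^{\mathsf{ext}}$, that these inter-block signs behave as the statement requires, since the column-triples $\varphi(F)$ are in general interleaved rather than consecutive. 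This is where I expect to spend most of the effort, carefully tracking how the scattered triples nest. Everything else—the vanishing criterion for terms, the bijection with edge-injective functions via Lemma~\ref{lemma.edge injective} and MacLane's two-faces-per-edge property, and the factorization of the entry products—is routine once the indexing is set up.
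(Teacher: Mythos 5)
Your proposal follows the paper's proof essentially step for step: expand $\det N$ by the Leibniz formula, observe that a nonzero term forces $\sigma$ to assign to the three rows of each face three distinct columns indexed by boundary edges of that face (hence determines an edge-injective $\varphi$ with $N_{\varphi}=N$), group the surviving permutations into the classes obtained by composing with the subgroup $\prod_{F}S_3$ that permutes rows within each face block, and recognize each class as the Leibniz expansion of $\prod_{F}\det N_{F,\varphi(F)}$. Up to that point there is nothing to add.

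The one place you go beyond the paper is the sign $\epsilon(\varphi)$, and your caution there is warranted rather than routine: $\epsilon(\varphi)$ is the sign of the shuffle carrying the consecutive row blocks onto the possibly interleaved column triples $\varphi(F_1),\varphi(F_2),\ldots$, and it is \emph{not} always $+1$. For example, with two faces occupying rows $1$--$3$ and $4$--$6$ of $N$ and an edge ordering for which $\varphi$ assigns them columns $\{1,3,5\}$ and $\{2,4,6\}$ respectively (realizable by two quadrilaterals sharing one edge, with the shared edge's column deleted), the reference permutation $1\mapsto 1$, $2\mapsto 3$, $3\mapsto 5$, $4\mapsto 2$, $5\mapsto 4$, $6\mapsto 6$ has three inversions, so $\epsilon(\varphi)=-1$ and that $\varphi$ contributes $-\det N_{F_1,\varphi(F_1)}\det N_{F_2,\varphi(F_2)}$. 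The paper's own proof elides this by writing $(-1)^{\pi\sigma}=(-1)^{\pi}(-1)^{\sigma}$ and ``distributing,'' so the displayed identity as literally stated holds only up to the signs $\epsilon(\varphi)$. This does not damage any downstream use (rank computations, non-vanishing of $\det N$, the special-position loci), since a fixed sign per summand cannot create cancellation where none existed, but you should either carry the $\epsilon(\varphi)$ explicitly or note that only non-vanishing is needed; the verification you defer cannot succeed in the form the statement requires.
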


\begin{proof}
Conversely to Lemma~\ref{lemma.edge injective}, for each nonzero term in~\eqref{equation.determine} and for each face $F \in \mathcal{F}$
corresponding to rows $i_1, i_2, i_3$ in $N_{\varphi}$, let $e_{i_1}, e_{i_2}, e_{i_3}$ be the edges corresponding to the 
columns $\sigma(i_1) ,\sigma(i_2), \sigma(i_3)$. Define $\varphi(F) = \{e_{i_1}, e_{i_2}, e_{i_3}\}$.

Consider the permutations in $S_{3\ell}$ that permute rows associated to each face but not rows associated to different faces.  Note that this is a subgroup 
$G \subseteq S_{3\ell}$ isomorphic to the product $\prod_{F \in \mathcal{F}} S_3$ of smaller symmetric groups.  

Furthermore, Lemma~\ref{lemma.edge injective} states that if $\pi \in G$ then the term $n_{1,\sigma(1)}\cdots n_{\ell,\sigma(3\ell)}$ is nonzero if and only 
if $n_{1, \pi \sigma(1)} \cdots n_{\ell, \pi \sigma(3\ell)}$ is also nonzero.  Moreover $(-1)^{\pi \sigma} = (-1)^\pi (-1)^\sigma$.  

Hence we may rewrite~\eqref{equation.determine} by distributing as follows:
\[
	\det N = \sum_{\substack{\varphi \colon \mathcal{F} \to \mathcal{E}_3\\ \text{edge-injective}\\ \text{with }N=N_{\varphi}}}  
	\prod_{F \in \mathcal{F}} \det N_{F,\varphi(F)},
\]
where $N_{F,\varphi(F)}$ is the submatrix of $N_{\varphi}$ with the $3$ rows corresponding to $F$ and columns indexed by the edges 
in $\varphi(F)$. 
\end{proof}

Let $e_1, e_2, e_3$ be the edges in $\varphi(F)$ listed in the order they appear in $M^{\mathsf{ext}}$ and suppose their labels are 
$\ell_i = (x+ a_i y)^2$ respectively for $i=1,2,3$. Then $\det N_{F,\varphi(F)}$ is the Vandermonde determinant associated to 
\[
	N_{F,\varphi(F)} = \begin{pmatrix}
	\varepsilon_1  & \varepsilon_2  &\varepsilon_3 \\
	 \varepsilon_1 a_1 & \varepsilon_2 a_2 & \varepsilon_3 a_3\\
	 \varepsilon_1 a_1^2 & \varepsilon_2 a_2^2 & \varepsilon_3 a_3^2
	 \end{pmatrix}
\]
for some choices $\varepsilon_i \in \{\pm 1\}$.  In other words,
\begin{equation}
\label{equation.Vandermonde}
	\det N_{F,\varphi(F)} = \pm (a_1 - a_2)(a_2 - a_3)(a_3 - a_1).
\end{equation}

\begin{corollary} \label{corollary.unique edge-injective function}
Suppose that $M^{\textsf{ext}}$ has a $3f_G \times 3f_G$ square submatrix $N$ obtained by erasing certain columns and that there exists a unique 
edge-injective function $\varphi \colon \mathcal{F} \to \mathcal{E}_3$ with $N_{\varphi} = N$.  Then there is a generic edge labeling for which
$\det N \neq 0$ or equivalently $\rank M^{\textsf{ext}} = 3f_G$.
In particular, if all edge-labels in $\varphi(F)$ are distinct for all faces $F$ in $G$ and there is a single edge-injective function,
then $M^{\textsf{ext}}$ is full-rank.
\end{corollary}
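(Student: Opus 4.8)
The plan is to apply Proposition~\ref{proposition.factoring det into edge-injective} directly and exploit the uniqueness hypothesis to kill all but one term in the resulting sum. First I would record that, under the standing assumption $e_G \geqslant 3f_G$, the submatrix $N$ has size $3f_G \times 3f_G$, so $d = \min\{e_G, 3f_G\} = 3f_G$ and Definition~\ref{definition.generic edge labels} applies to exactly this $N$. By Proposition~\ref{proposition.factoring det into edge-injective},
\[
	\det N = \sum_{\substack{\varphi \colon \mathcal{F} \to \mathcal{E}_3 \\ \text{edge-injective} \\ \text{with } N_{\varphi} = N}} \prod_{F \in \mathcal{F}} \det N_{F,\varphi(F)}.
\]
By hypothesis there is a unique edge-injective $\varphi$ with $N_{\varphi} = N$, so this sum collapses to the single product $\prod_{F \in \mathcal{F}} \det N_{F,\varphi(F)}$, and in particular there is no possibility of cancellation between distinct terms.

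Next I would show that each factor is a nonzero polynomial. By Equation~\eqref{equation.Vandermonde}, each $\det N_{F,\varphi(F)}$ is a Vandermonde determinant $\pm(a_1 - a_2)(a_2 - a_3)(a_3 - a_1)$ in the label variables of the three edges comprising $\varphi(F)$. Since $\varphi(F)$ is an unordered triple of pairwise-distinct edges on the boundary of $F$ (distinct because the columns selected by a permutation are distinct), these edges carry three distinct indeterminates $a_i$, so each such factor is a genuinely nonzero element of the polynomial ring $\mathbb{C}[a_1, \ldots, a_{e_G}]$.

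Then I would conclude by integrality: $\mathbb{C}[a_1, \ldots, a_{e_G}]$ is an integral domain, so a product of nonzero polynomials is nonzero. Hence $\det N$ is not identically zero as a polynomial in the $a_i$, which is exactly the defining condition for the edge labels to be generic (Definition~\ref{definition.generic edge labels}); for values of the $a_i$ not in special position we then have $\det N \neq 0$, and therefore $\rank M^{\mathsf{ext}} = 3f_G$ by Equation~\eqref{equation.generic rank}. For the final ``in particular'' claim I would simply observe that when there is a single edge-injective function and the numerical values of the $a_i$ are distinct within each $\varphi(F)$, each Vandermonde factor is a nonzero \emph{number}, so their product $\det N$ is a nonzero number and $M^{\mathsf{ext}}$ is full rank directly, without passing through genericity.

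The main obstacle here is conceptual rather than computational: in general the sum over edge-injective functions can exhibit cancellation (this is precisely the phenomenon the rest of the paper studies in order to detect non-generic graphs), so the real content is recognizing that the uniqueness hypothesis removes every competing term and reduces the claim to the triviality that a product of nonzero Vandermonde polynomials in distinct variables cannot vanish identically. The only point requiring care is confirming that the three edges in each $\varphi(F)$ are genuinely distinct, and hence carry distinct variables, so that no Vandermonde factor degenerates.
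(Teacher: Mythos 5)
Your proposal is correct and follows essentially the same route as the paper's own proof: invoke Proposition~\ref{proposition.factoring det into edge-injective}, use uniqueness to collapse the sum to a single product of Vandermonde determinants, and observe that each factor is a nonzero polynomial in distinct edge-label variables. The extra care you take in noting that the three edges of each $\varphi(F)$ are genuinely distinct (so no Vandermonde factor degenerates) and in appealing to the integral-domain property of $\mathbb{C}[a_1,\ldots,a_{e_G}]$ simply makes explicit what the paper leaves implicit.
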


\begin{proof}
Under these hypotheses, Proposition~\ref{proposition.factoring det into edge-injective} expresses the determinant of $N$ as the product
\[
	\prod_{F \in \mathcal{F}} \det N_{F,\varphi(F)}.
\]
Moreover, each $\det_{F,\varphi(F)}$ is a Vandermonde determinant and thus is nonzero if and only if the variables $a_i$ are distinct.
\end{proof}

We will later show that the existence of an edge-injective function $\varphi$ is enough to guarantee a generic edge labeling.  Our edge-injective functions have a similar flavor (in the dual sense) to Whiteley's 3-fans in his unpublished work~\cite[p. 16]{Wh.unpublished}.
See also~\cite{WW.1983,WW.1987}.

\subsection{Edge-injective functions and contractions} \label{section: edge-injective and contractions, first results}

We now combine these results with the idea of contractions and contractibility from Section~\ref{section: minimal contractibility}. Our first corollary factors the determinant formula of Proposition~\ref{proposition.factoring det into edge-injective} in terms of subgraphs and contracted subgraphs.

\begin{corollary}
\label{corollary.factor}
Let $G$ be a finite, directed, connected, planar graph without leaves such that $e_G \geqslant 3 f_G$.
Suppose that $G$ has a subgraph $G'$ consisting of a union of $f'$ faces in $G$ together with the vertices and $e'$ edges bounding those faces, and 
suppose that $e' \leqslant 3f'$.  Suppose that $N$ is any $3f_G \times 3f_G$ submatrix of $M^{\mathsf{ext}}$ obtained by removing some columns 
from $M^{\mathsf{ext}}$.  Then 
\begin{enumerate}
\item $\det N = 0$ if $e' < 3f'$, or
\item $\det N = \det N_{G'} \det N_{G\backslash G'}$ if $e'=3f'$, where $N_{G'}$ is the matrix $M_{G'}^{\mathsf{ext}}$ and $N_{G\backslash G'}$ is the matrix 
obtained from $M^{\mathsf{ext}}$ by restricting to faces in $G$ that are not contained in $G'$ and edges in $G$ that do not bound a face in $G'$. 
\end{enumerate}
Moreover the matrix $N_{G\backslash G'}$ in Part (2) is the matrix associated to the graph  $G\backslash G'$ obtained from $G$ by contracting the edges in $G'$.
\end{corollary}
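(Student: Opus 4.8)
The plan is to push everything through the edge-injective decomposition of Proposition~\ref{proposition.factoring det into edge-injective}, which expresses
\[
	\det N = \sum_{\substack{\varphi\colon \mathcal{F}\to\mathcal{E}_3\\ \text{edge-injective},\ N_{\varphi}=N}} \ \prod_{F\in\mathcal{F}} \det N_{F,\varphi(F)},
\]
and then to control this sum by counting how many edges of $G'$ any admissible $\varphi$ is forced to spend on the faces of $G'$. The first observation is that if $F$ is one of the $f'$ faces of $G'$, then every edge on the boundary of $F$ is by definition one of the $e'$ edges bounding $G'$, so $\varphi(F)\subseteq E(G')$; since the triples $\varphi(F)$ are pairwise disjoint, the restriction of $\varphi$ to the faces of $G'$ uses exactly $3f'$ distinct edges of $G'$. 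For Part~(1), where $e'<3f'$, there are simply not enough edges in $G'$ to supply these, so no edge-injective $\varphi$ with $N_{\varphi}=N$ exists, the sum is empty, and $\det N=0$. (Equivalently, the $3f'$ rows of $N$ attached to faces of $G'$ have all their nonzero entries among the at most $e'$ columns indexed by edges of $G'$, hence lie in a space of dimension $e'<3f'$ and are linearly dependent.)

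For Part~(2), where $e'=3f'$, the $3f'$ distinct edges used on the faces of $G'$ are now \emph{all} of $E(G')$, so by disjointness no edge of $G'$ is left for any face outside $G'$. Thus every admissible $\varphi$ splits uniquely as $\varphi=\varphi'\sqcup\varphi''$, where $\varphi'$ is an edge-injective assignment of the faces of $G'$ whose triples exhaust $E(G')$ and $\varphi''$ assigns each remaining face a triple drawn from $E(G)-E(G')$; conversely any such pair reassembles to an admissible $\varphi$, so $\varphi\leftrightarrow(\varphi',\varphi'')$ is a bijection. Because $\prod_{F}\det N_{F,\varphi(F)}$ separates over the two families of faces, the sum factors as
\[
	\det N = \Bigl(\sum_{\varphi'}\prod_{F\in G'}\det N_{F,\varphi'(F)}\Bigr)\Bigl(\sum_{\varphi''}\prod_{F\notin G'}\det N_{F,\varphi''(F)}\Bigr),
\]
and reading Proposition~\ref{proposition.factoring det into edge-injective} backwards on each square submatrix identifies the first factor with $\det M_{G'}^{\mathsf{ext}}=\det N_{G'}$ (here $M_{G'}^{\mathsf{ext}}$ is genuinely square, of size $3f'\times e'=3f'\times 3f'$) and the second with $\det N_{G\backslash G'}$. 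The same factorization can be seen directly: ordering rows so that faces of $G'$ come first and columns so that edges of $G'$ come first makes $N$ block lower-triangular, because the faces of $G'$ vanish on all columns outside $E(G')$.

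What remains is the ``Moreover'' assertion that $N_{G\backslash G'}$ is the extended cycle basis matrix of the contracted graph $G\backslash G'$, and this is the one step I expect to require genuine care. By Remark~\ref{remark.contracted} the edges of $G\backslash G'$ are exactly $E(G)-E(G')$, matching the columns of $N_{G\backslash G'}$. For the rows I would argue that contracting all edges of $G'$ collapses each connected component of $G'$ to a single vertex, so the faces that disappear are precisely those interior to $G'$, while each face $F$ of $G$ not lying in $G'$ survives; its face cycle in $G\backslash G'$ is obtained from its boundary walk in $G$ by deleting the now-contracted edges of $G'$ and leaving every surviving edge with its original direction and label, so the row of $M^{\mathsf{ext}}_{G\backslash G'}$ indexed by $F$ coincides with the row of $N_{G\backslash G'}$ indexed by $F$. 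The delicate points are that the contraction merges no two surviving faces and that a surviving face meeting $G'$ in several boundary arcs still has a well-defined face cycle once those arcs collapse to points; these I would settle using planarity together with the standing hypotheses that $G$ has no holes and that any two faces share at most one edge, which rule out the spurious identifications. With this identification the two determinants on the right are exactly $\det N_{G'}$ and the determinant of the contracted graph, completing the proof.
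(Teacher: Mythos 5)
Your proposal is correct and follows essentially the same route as the paper: both arguments observe that any edge-injective function must assign faces of $G'$ only edges of $G'$, conclude that the sum in Proposition~\ref{proposition.factoring det into edge-injective} is empty when $e'<3f'$ and factors as a product over the two families of faces when $e'=3f'$, and then identify the second factor with the contracted graph. Your treatment is in fact somewhat more detailed than the paper's (the block-triangular reformulation and the discussion of the ``Moreover'' clause, which the paper dispatches with a one-line appeal to the definition of $G\backslash G'$), but the underlying argument is the same.
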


\begin{proof}
Every edge-injective function $\varphi: \mathcal{F} \to \mathcal{E}_3$ must send faces in $G'$ 
to edges in $G'$ so it restricts to an edge-injective function on $G'$.  Conversely since $G'$ has $f'$ faces and at most $3f'$ edges, every edge-injective 
function on $G'$ uses up all those edges.  In particular, no edge-injective function on $G$ has an edge of $G'$ in the image of a face outside of $G'$.

In other words, the set of edge-injective functions $\mathcal{F} \to \mathcal{E}_3$ factors as
\[
	\left( \varphi: \mathcal{F} \to \mathcal{E}_3 \right) \hspace{1em} \longleftrightarrow \hspace{1em}
	\left( \varphi: \mathcal{F}_{G'} \to \mathcal{E}_{G'} \right) \times 
	\left( \varphi: \mathcal{F}_{G \backslash G'} \to \mathcal{E}_{G \backslash G'} \right).
\]
In particular, if $e'<3f'$, there is no edge-injective function $\varphi: \mathcal{F}_{G'} \to \mathcal{E}_{G'}$ and hence no edge-injective function
$\varphi: \mathcal{F} \to \mathcal{E}_3$. This implies by Corollary~\ref{proposition.factoring det into edge-injective} that $\det N =0$.
If $e'=3f'$, by Corollary~\ref{proposition.factoring det into edge-injective} and definition of the graph $G \backslash G'$ the claim follows. 
\end{proof}

\begin{corollary}
\label{corollary.rank decomp}
Let $G$ be a finite, directed, connected, planar graph without leaves and with $e_G \geqslant 3 f_G$.
Suppose that $G$ has a subgraph $G'$ with generic edge labels consisting of a union of $f'$ faces in $G$ together with the vertices and $e'$ 
edges bounding those faces, and suppose that $e' \leqslant 3f'$. Let $M^{\mathsf{ext}}_{G'}$ be the $3f'\times e'$ rectangular submatrix corresponding 
to $G'$ and $M_{G \backslash G'}^{\mathsf{ext}}$ the extended cycle basis matrix corresponding to the contracted graph $G \backslash G'$. 
Then 
\[
	\rank M^{\mathsf{ext}} = \rank M^{\mathsf{ext}}_{G'} + \rank M^{\mathsf{ext}}_{G\backslash G'}.
\]
\end{corollary}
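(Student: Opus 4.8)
The plan is to exhibit $M^{\mathsf{ext}}$ in block lower-triangular form with respect to a suitable ordering of rows and columns, and then to apply a rank computation valid for such matrices whose upper-left block has full column rank.

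First I would reorder the rows of $M^{\mathsf{ext}}$ so that the three rows of every face in $G'$ come before the three rows of every face not in $G'$, and reorder the columns so that the $e'$ edges in $E(G')$ come before the $e_G-e'$ edges in $E(G)\setminus E(G')$. With this ordering $M^{\mathsf{ext}}$ acquires the block form
\[
	M^{\mathsf{ext}} = \begin{pmatrix} M^{\mathsf{ext}}_{G'} & 0 \\ B & M^{\mathsf{ext}}_{G\backslash G'} \end{pmatrix}.
\]
The crucial point is that the upper-right block vanishes: since $G'$ is a union of faces together with all the edges bounding them, every edge on the boundary of a face $F$ in $G'$ already lies in $E(G')$, so the three rows of $F$ have zero entries in every column indexed by an edge of $E(G)\setminus E(G')$. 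The upper-left block is $M^{\mathsf{ext}}_{G'}$ by definition, and the lower-right block is $M^{\mathsf{ext}}_{G\backslash G'}$ by the same identification established in Corollary~\ref{corollary.factor}, where restricting $M^{\mathsf{ext}}$ to faces outside $G'$ and to edges not bounding a face of $G'$ reproduces the extended cycle basis matrix of the contracted graph $G\backslash G'$.

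Next I would record that, because $G'$ has generic edge labels and $e'\leqslant 3f'$, equation~\eqref{equation.generic rank} applied to $G'$ gives $\rank M^{\mathsf{ext}}_{G'} = \min\{e',3f'\}=e'$; that is, the $3f'\times e'$ block $A := M^{\mathsf{ext}}_{G'}$ has full column rank. I would then prove the elementary fact that a block matrix $\begin{pmatrix} A & 0 \\ B & C \end{pmatrix}$ whose upper-left block $A$ has full column rank satisfies $\rank = \rank A + \rank C$. Writing $V_1$ for the column space of $\begin{pmatrix} A \\ B \end{pmatrix}$ and $V_2$ for the column space of $\begin{pmatrix} 0 \\ C \end{pmatrix}$, the full column space is $V_1+V_2$. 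Full column rank of $A$ forces the first group of columns to be independent, so $\dim V_1 = e' = \rank A$, while $\dim V_2 = \rank C$. Any $w\in V_1\cap V_2$ satisfies $Au=0$ on its top block, hence $u=0$ and $w=0$, so $V_1\cap V_2=\{0\}$. Therefore $\rank M^{\mathsf{ext}} = \dim V_1 + \dim V_2 = \rank M^{\mathsf{ext}}_{G'} + \rank M^{\mathsf{ext}}_{G\backslash G'}$, as claimed.

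I expect the main obstacle to be the bookkeeping behind the block identifications rather than the final rank argument: one must verify that contracting the edges of $G'$ leaves the support, the signs $\varepsilon_i$, and the Vandermonde labels of the remaining boundary edges around each outside face unchanged, so that the lower-right block is \emph{literally} $M^{\mathsf{ext}}_{G\backslash G'}$ and not merely a matrix of the same rank. Corollary~\ref{corollary.factor} already supplies this identification, so the proof largely amounts to assembling these pieces together with the full-column-rank hypothesis on $G'$.
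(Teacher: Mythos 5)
Your proposal is correct and follows the same basic route as the paper: reorder rows and columns so that $M^{\mathsf{ext}}$ becomes block lower-triangular with $M^{\mathsf{ext}}_{G'}$ in the upper-left and $M^{\mathsf{ext}}_{G\backslash G'}$ in the lower-right, the zero block in the upper-right coming from the fact that every edge bounding a face of $G'$ already lies in $E(G')$. Where you go beyond the paper is worth noting: the paper's own proof simply displays the block form $\left(\begin{smallmatrix} M^{\mathsf{ext}}_{G'} & 0 \\ * & M^{\mathsf{ext}}_{G\backslash G'}\end{smallmatrix}\right)$ and asserts ``this proves that the rank is additive,'' but rank is \emph{not} additive for arbitrary block lower-triangular matrices (consider $\left(\begin{smallmatrix} 0 & 0 \\ 1 & 0\end{smallmatrix}\right)$, which has rank $1$ while both diagonal blocks have rank $0$). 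You correctly identify the missing ingredient: the genericity hypothesis on $G'$ together with $e'\leqslant 3f'$ forces $M^{\mathsf{ext}}_{G'}$ to have full column rank $e'$ via~\eqref{equation.generic rank}, and for a block lower-triangular matrix whose upper-left block has full column rank the column-space argument you give ($V_1\cap V_2=\{0\}$) does yield additivity. This makes explicit where the hypothesis of generic edge labels is actually used, which the paper leaves implicit, so your write-up is a strictly more complete version of the paper's argument rather than a different one.
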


\begin{proof}
By construction of $G'$, the extended cycle matrix is block diagonal: 
\[
	M^\mathsf{ext}_G = \left( \begin{array}{cc} M^\mathsf{ext}_{G'} & 0 \\ * & M^\mathsf{ext}_{G\backslash G'} \end{array} \right).
\]
This proves that the rank is additive.
\end{proof}

\begin{corollary}
\label{corollary.dim decomp}
Under the same assumptions as Corollary~\ref{corollary.rank decomp} and assuming that the subgraph $G'$ has generic edge labels, we have
\[
	\dim \mathsf{Spl}_2(G,\ell; v_0) = \dim \mathsf{Spl}_2(G \backslash G',\ell; v_0).
\]
\end{corollary}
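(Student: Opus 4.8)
The plan is to convert the statement entirely into a statement about ranks of extended cycle basis matrices using Theorem~\ref{theorem.spline rank}, and then feed in the rank decomposition already established in Corollary~\ref{corollary.rank decomp}. Concretely, Theorem~\ref{theorem.spline rank} gives
\[
	\dim \Spl_2(G,\ell;v_0) = e_G - \rank M^{\mathsf{ext}}
	\qquad\text{and}\qquad
	\dim \Spl_2(G\backslash G',\ell;v_0) = e_{G\backslash G'} - \rank M^{\mathsf{ext}}_{G\backslash G'},
\]
so it suffices to show that these two right-hand sides agree.

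First I would record the bookkeeping on edges. By Remark~\ref{remark.contracted} the contracted graph $G\backslash G'$ has edge set $E(G)-E(G')$, and since $G'$ is bounded by exactly $e'$ edges this gives $e_{G\backslash G'} = e_G - e'$. Next I would invoke the genericity hypothesis on $G'$: because $e' \leqslant 3f'$, equation~\eqref{equation.generic rank} applied to the subgraph $G'$ yields $\rank M^{\mathsf{ext}}_{G'} = \min\{e',3f'\} = e'$. That is, the $3f'\times e'$ matrix $M^{\mathsf{ext}}_{G'}$ attains its maximal possible rank $e'$.

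Combining these facts with the additivity of rank from Corollary~\ref{corollary.rank decomp}, I would substitute
\[
	\rank M^{\mathsf{ext}} = \rank M^{\mathsf{ext}}_{G'} + \rank M^{\mathsf{ext}}_{G\backslash G'} = e' + \rank M^{\mathsf{ext}}_{G\backslash G'}
\]
into the formula for $\dim \Spl_2(G,\ell;v_0)$, obtaining
\[
	e_G - \rank M^{\mathsf{ext}} = (e_G - e') - \rank M^{\mathsf{ext}}_{G\backslash G'} = e_{G\backslash G'} - \rank M^{\mathsf{ext}}_{G\backslash G'},
\]
which is precisely $\dim \Spl_2(G\backslash G',\ell;v_0)$, completing the argument.

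Because the genuinely hard work — the factorization of the determinant through edge-injective functions, the block-triangular structure of $M^{\mathsf{ext}}$, and the resulting rank additivity — has already been carried out in Proposition~\ref{proposition.factoring det into edge-injective} and Corollaries~\ref{corollary.factor} and~\ref{corollary.rank decomp}, no serious obstacle remains. The one point demanding care is that the genericity of $G'$ is used exactly to force $\rank M^{\mathsf{ext}}_{G'}$ up to its maximum $e'$ rather than dropping on the special-position locus; it is this that makes the two $e'$ contributions cancel, and without it the dimensions could genuinely differ.
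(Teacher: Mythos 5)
Your proof is correct and follows essentially the same route as the paper: genericity of $G'$ forces $\rank M^{\mathsf{ext}}_{G'} = e'$, rank additivity from Corollary~\ref{corollary.rank decomp} and the edge count $e_{G\backslash G'} = e_G - e'$ then make the two contributions of $e'$ cancel, and Theorem~\ref{theorem.spline rank} converts back to dimensions. The paper's own proof is just a terser version of this same substitution.
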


\begin{proof}
For generic edge labels on $G'$, we have $\rank M_{G'}^{\mathsf{ext}} = e'$ and $\dim \mathsf{Spl}_2(G',\ell; v_0)=0$ by Section~\ref{section.trivial spline}.
By Corollary~\ref{corollary.rank  decomp} and Theorem~\ref{theorem.spline rank} the result follows.
\end{proof}

\begin{example}
Consider the planar graph $G$ on the left, below.
\begin{center}
\begin{tikzpicture}[scale=0.5]
\draw (0,2) to (2,4);
\draw (0,2) to (2,0);
\draw (2,4) to (4,4);
\draw (2,0) to (4,0);
\draw (4,0) to (4,4); 
\draw (4,0) to[out=20, in=5] (4,4);
\node at (3,4.5) {$e_1$};
\node at (0.8,3.5){$e_2$};
\node at (0.8,0.5){$e_3$};
\node at (3,-0.7){$e_4$};
\node at (3.5,2){$e_5$};
\node at (5.6,2){$e_6$};
\node at (0,)[anchor=south east]{};
\node at (0,2){$\bullet$};
\node at (2,4){$\bullet$};
\node at (2,0){$\bullet$};
\node at (4,0){$\bullet$};
\node at (4,4){$\bullet$};
\end{tikzpicture}
\hspace{1in}
\raisebox{0.6in}{$M^{\mathsf{ext}}=
	\begin{pmatrix}
	1&1& 1& 1 & 1 & 0 \\
	a_1&a_2& a_3& a_4 & a_5 & 0 \\
	a_1^2&a_2^2& a_3^2& a_4^2 & a_5^2 & 0\\
	0&0&0&0&-1&-1\\
	0&0&0&0&-a_5&-a_6\\
	0&0&0&0&-a_5^2&-a_6^2\\
	\end{pmatrix}$}
\end{center}
This graph has two faces $F_1$ and $F_2$ with face cycles $e_1 e_2 e_3 e_4 e_5$ and $e_5 e_6$, respectively. Directing the edges clockwise
for face $F_1$ and anticlockwise for face $F_2$ gives the extended cycle basis matrix on the right.  The subgraphs $G'$ and $G \backslash G'$ are respectively
\begin{center}
\begin{tikzpicture}[scale=0.5]
\draw (4,0) to (4,4); 
\draw (4,0) to[out=20, in=5] (4,4);
\node at (3.5,2){$e_5$};
\node at (5.6,2){$e_6$};
\node at (0,)[anchor=south east]{};
\node at (4,0){$\bullet$};
\node at (4,4){$\bullet$};
\end{tikzpicture}
\hspace{1cm} \raisebox{1cm}{and} \hspace{1cm}
\begin{tikzpicture}[scale=0.5]
\draw (0,2) to (2,4);
\draw (0,2) to (2,0);
\draw (2,4) to (4,2);
\draw (2,0) to (4,2);
\node at (3.2,3.5) {$e_1$};
\node at (0.8,3.5){$e_2$};
\node at (0.8,0.5){$e_3$};
\node at (3.2,0.5){$e_4$};
\node at (0,)[anchor=south east]{};
\node at (0,2){$\bullet$};
\node at (2,4){$\bullet$};
\node at (2,0){$\bullet$};
\node at (4,2){$\bullet$};
\end{tikzpicture}
\raisebox{1cm}{.}
\end{center}
By case (1) of Corollary~\ref{corollary.factor} we have $\det M^{\mathsf{ext}} = \det N=0$.
By Corollary~\ref{corollary.dim decomp} we have $\dim \mathsf{Spl}_2(G,\ell; v_0) = \dim \mathsf{Spl}_2(G \backslash G',\ell;v_0)=1$ for generic edge 
labels on $G'$.
\end{example}

\begin{example}
\label{example.splittable}
Consider the planar graph $G$ on the left, below.
\begin{center}
\begin{tikzpicture}[scale=0.5]
\draw (0,0) rectangle (2,2);
\draw (2,2) to (4,1);
\draw (2,0) to (4,1); 
\node at (-0.6,1) {$e_2$};
\node at (1.2,2.3){$e_1$};
\node at (1.2,-0.5){$e_3$};
\node at (1.5,1){$e_4$};
\node at (3,2){$e_6$};
\node at (3,0){$e_5$};
\node at (0,)[anchor=south east]{};
\node at (0,0){$\bullet$};
\node at (2,2){$\bullet$};
\node at (0,2){$\bullet$};
\node at (2,0){$\bullet$};
\node at (4,1){$\bullet$};
\end{tikzpicture}
\hspace{1in}
\raisebox{0.4in}{$M^{\mathsf{ext}}=
	\begin{pmatrix}
	1&1& 1& 1 & 0 & 0 \\
	a_1&a_2& a_3& a_4 & 0 & 0 \\
	a_1^2&a_2^2& a_3^2& a_4^2 & 0 & 0\\
	0&0&0&-1&-1&-1\\
	0&0&0&-a_4&-a_5&-a_6\\
	0&0&0&-a_4^2&-a_5^2&-a_6^2\\
	\end{pmatrix}$}
\end{center}
This graph has two faces $F_1$ and $F_2$ with face cycles $e_1 e_2 e_3 e_4$ and $e_4 e_5 e_6$, respectively. Directing the edges clockwise
for face $F_1$ and anticlockwise for face $F_2$, the extended cycle basis matrix is on the right, above.  The subgraph $G'$ and $G \backslash G'$ are respectively
\begin{center}
\begin{tikzpicture}[scale=0.5]
\draw (2,0) to (2,2);
\draw (2,2) to (4,1);
\draw (2,0) to (4,1); 
\node at (1.5,1){$e_4$};
\node at (3,2){$e_6$};
\node at (3,0){$e_5$};
\node at (0,)[anchor=south east]{};
\node at (2,0){$\bullet$};
\node at (2,2){$\bullet$};
\node at (4,1){$\bullet$};
\end{tikzpicture}
\hspace{1cm} \raisebox{0.5cm}{and} \hspace{1cm}
\begin{tikzpicture}[scale=0.5]
\draw (2,0) to (2,2);
\draw (2,2) to (4,1);
\draw (2,0) to (4,1); 
\node at (1.5,1){$e_2$};
\node at (3,2){$e_1$};
\node at (3,0){$e_3$};
\node at (0,)[anchor=south east]{};
\node at (2,0){$\bullet$};
\node at (2,2){$\bullet$};
\node at (4,1){$\bullet$};
\end{tikzpicture} \raisebox{0.7cm}{.}
\end{center}
Note that $e_{G'} = 3 f_{G'}$, so that case (2) of Corollary~\ref{corollary.factor} applies. Indeed, we have $\det M^{\mathsf{ext}} = \det N = \det N_{G'}
\det N_{G\backslash G'}$, where
\[
	N_{G'} = \begin{pmatrix}
	-1&-1&-1\\
	-a_4&-a_5&-a_6\\
	-a_4^2&-a_5^2&-a_6^2
	\end{pmatrix}
	\qquad \text{and} \qquad
	N_{G\backslash G'} = \begin{pmatrix}
	1&1&1\\
	a_1&a_2&a_3\\
	a_1^2&a_2^2&a_3^2
	\end{pmatrix}.
\]
Corollary~\ref{corollary.dim decomp} states $\dim \mathsf{Spl}_2(G,\ell; v_0) = \dim \mathsf{Spl}_2(G \backslash G',\ell; v_0)=0$ for generic edge 
labels on $G'$.
\end{example}

We now add the vocabulary of contractibility to give an explicit polynomial that characterizes the locus of edge labels in special position, namely the edge labels for which $M^{\mathsf{ext}}$ has smaller-than-expected rank.

\begin{theorem} \label{theorem: special position formula}
Suppose that $(G_{S_1}, G_{S_2}, \ldots, G_{S_k}), (G_1', G_2', \ldots, G_{k+1}')$ are two sequences of graphs obtained from $G=G_1'$ successively as follows.  Each graph $G_{S_i}$ is obtained from a minimal contractible set of faces $S_i$ in $G_i'$ and the graph $G_{i+1}' = G_i' \backslash G_{S_i}$. Assume that $G_{k+1}'$ has no proper subset of contractible faces.  Further assume that within each $G_i'$ no two minimal contractible subgraphs share an edge.

For each $i = 1, \ldots, k$, the collection of edge labels $\{a_1, a_2, \ldots\}$ that are in special position for $G_{S_i}$ forms an algebraic variety $\mathcal{V}_i$.  The analogous claim holds for $G_{k+1}'$ and variety $\mathcal{V}_{k+1}$ as well.  

The union $\bigcup_{i=1}^{k+1} \mathcal{V}_i$ is the algebraic variety that describes the locus of edge labels $\{a_1, a_2, \ldots \}$ in special position for $G$.  

Each $\mathcal{V}_i$ with $i \leqslant k$ is an algebraic hypersurface given by the vanishing of the following polynomial:
\begin{itemize}
    \item if $e_{G_{S_i}}-3f_{G_{S_i}}=-2$ the polynomial $a_j$ where $e_j$ is the sole edge in $G_{S_i}$
    \item if $e_{G_{S_i}}-3f_{G_{S_i}}=-1$ the polynomial $a_j-a_{j'}$ where $e_j, e_{j'}$ are the two edges in $G_{S_i}$ and
    \item if $e_{G_{S_i}}-3f_{G_{S_i}}=0$ the polynomial 
\[
	\sum_{\substack{\varphi \colon \mathcal{F} \to \mathcal{E}_3\\ \text{edge-injective}}}  
	\prod_{F \in \mathcal{F}} \det N_{F,\varphi(F)},
\]
where the sum is taken over all possible edge-injective functions on $G_{S_i}$.
\end{itemize}
For the graph $G_{k+1}'$ without any proper contractible subsets, let $N$ be any square submatrix of $M^{\mathsf{ext}}_{G_{k+1}'}$ 
that has $3f_{G_{k+1}'}$ rows and define the hypersurface $\mathcal{H}_N$ to be the vanishing of the polynomial 
\[
	\sum_{\substack{\varphi \colon \mathcal{F} \to \mathcal{E}_3\\ \text{edge-injective}}}  
	\prod_{F \in \mathcal{F}} \det N_{F,\varphi(F)},
\]
where the sum is taken over all possible edge-injective functions on the subset of edges corresponding to $N$.  Then the variety 
$\mathcal{V}_{k+1}$ corresponding to $G_{k+1}'$ is the intersection of the hypersurfaces $\cap_{N} \mathcal{H}_N$ over every possible square 
submatrix $N$ in $M^{\mathsf{ext}}_{G_{k+1}'}$.
\end{theorem}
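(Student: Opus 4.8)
The plan is to analyze $M^{\mathsf{ext}} = M^{\mathsf{ext}}_G$ through the block structure that the contraction sequence forces on it. First I would record the combinatorial bookkeeping: since contracting a minimal contractible subgraph $G_{S_i}$ deletes exactly the edges $E(G_{S_i})$, the successive graphs satisfy $E(G_{i+1}') = E(G_i') \setminus E(G_{S_i})$, so the edges of $G$ split as a disjoint union
\[ E(G) = E(G_{S_1}) \sqcup \cdots \sqcup E(G_{S_k}) \sqcup E(G_{k+1}'), \]
and (contracting the connected $G_{S_i}$ removes its $f_{S_i}$ interior faces, by Euler's formula) the faces split as $f_G = \sum_{i=1}^k f_{S_i} + f_{k+1}$. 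The hypothesis that no two minimal contractible subgraphs of a given $G_i'$ share an edge ensures these pieces are genuinely edge-disjoint, so the label variables $\{a_j\}$ separate into disjoint blocks, one per piece. Ordering rows (faces) and columns (edges) by piece $1,\dots,k,k+1$, an edge $e \in E(G_{S_i})$ lies only on faces of $S_i$ and of later pieces — never earlier, since earlier pieces' face boundaries lie in their own edge sets — so $M^{\mathsf{ext}}$ is block lower-triangular with diagonal blocks $M^{\mathsf{ext}}_{G_{S_1}}, \dots, M^{\mathsf{ext}}_{G_{S_k}}, M^{\mathsf{ext}}_{G_{k+1}'}$, as in Corollary~\ref{corollary.rank decomp}.

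Next I would pin down the maximal minors. Each $E(G_{S_i})$ column group has only $e_{S_i}$ columns, and the $E(G_{k+1}')$ columns are confined to the $3f_{k+1}$ rows of piece $k+1$, so the rank is at most $R := \sum_{i=1}^k e_{S_i} + 3f_{k+1}$ (using $e_{S_i}\le 3f_{S_i}$ and $e_{k+1}\ge 3f_{k+1}$). Any $R \times R$ submatrix $N$ with nonzero determinant must therefore use all $e_{S_i}$ columns of each $E(G_{S_i})$ and $3f_{k+1}$ of the $E(G_{k+1}')$ columns. The crucial step is a downward-induction ``pinning'' argument on the Leibniz expansion: the $E(G_{k+1}')$ columns vanish on all earlier rows, so they must be matched to the $3f_{k+1}$ rows of piece $k+1$; this then forces the $E(G_{S_k})$ columns onto piece-$k$ rows, and so on down to $E(G_{S_1})$. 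Hence the off-diagonal ``shared-edge'' entries never enter a nonzero term, and
\[ \det N = \pm \Big(\prod_{i=1}^k \det B_i\Big)\, \det N_{k+1}, \]
where $B_i$ is an $e_{S_i}\times e_{S_i}$ submatrix of $M^{\mathsf{ext}}_{G_{S_i}}$ and $N_{k+1}$ a $3f_{k+1}\times 3f_{k+1}$ submatrix of $M^{\mathsf{ext}}_{G_{k+1}'}$; this is the minor-level refinement of Corollary~\ref{corollary.factor}. Because the factors involve pairwise disjoint variable blocks and the row/column choices in different pieces are independent, $\rank M^{\mathsf{ext}} = R$ at a given labeling if and only if every $M^{\mathsf{ext}}_{G_{S_i}}$ has full column rank $e_{S_i}$ and $M^{\mathsf{ext}}_{G_{k+1}'}$ has full row rank $3f_{k+1}$. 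Taking contrapositives and applying Theorem~\ref{theorem.spline rank}, the labels are in special position for $G$ exactly when some piece is rank-deficient, giving the asserted union $\bigcup_{i=1}^{k+1}\mathcal{V}_i$.

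It then remains to identify each piece. For $i\le k$ the subgraph $G_{S_i}$ is minimal contractible with $e_{S_i}-3f_{S_i}\in\{0,-1,-2\}$ by Lemma~\ref{lemma.minimal contractible}, and under the standing assumption that two faces share at most one edge, Corollary~\ref{corollary.min contract} forces $G_{S_i}$ to be a loop, a $2$-cycle, or a graph with $e_{S_i}=3f_{S_i}$. In the loop and $2$-cycle cases one reads off the rank-deficiency locus directly from the $3\times 1$ and $3\times 2$ matrices $M^{\mathsf{ext}}_{G_{S_i}}$, obtaining the linear polynomials $a_j$ and $a_j-a_{j'}$ (the $2\times 2$ minors of the $2$-cycle matrix are $\pm(a_j-a_{j'})$). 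When $e_{S_i}=3f_{S_i}$ the block is square and deficiency means $\det M^{\mathsf{ext}}_{G_{S_i}}=0$; expanding this by Proposition~\ref{proposition.factoring det into edge-injective} and using that each $N_{F,\varphi(F)}$ is a Vandermonde determinant \eqref{equation.Vandermonde} produces exactly the stated edge-injective sum. For $G_{k+1}'$, which has no proper contractible subset and so $e_{k+1}\ge 3f_{k+1}$, deficiency means \emph{every} $3f_{k+1}\times 3f_{k+1}$ submatrix has zero determinant; applying Proposition~\ref{proposition.factoring det into edge-injective} to each such $N$ defines $\mathcal{H}_N$, and $\mathcal{V}_{k+1}=\bigcap_N \mathcal{H}_N$. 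Thus each $\mathcal{V}_i$ with $i\le k$ is the vanishing of a single polynomial, hence a hypersurface, while $\mathcal{V}_{k+1}$ is an intersection of hypersurfaces.

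I expect the main obstacle to be the forced-matching factorization of $\det N$: the matrix is only block lower-triangular, so a priori an edge shared between a contracted piece and a later surviving face could contribute to a minor and inflate the rank, destroying the clean ``deficient in some piece'' dichotomy. The resolution is the bottom-up pinning argument, which relies essentially on edge-disjointness of the pieces (confining each column group to its own and later row groups) and on the exact arithmetic $e_{S_i}\le 3f_{S_i}$ that makes the row counts match after the lower pieces are pinned. A secondary point to check is that the generic rank really equals $R$ — equivalently that each piece admits non-deficient labels — so that the union is a proper subvariety; this is precisely the existence of generic edge labels for minimal contractible graphs from Section~\ref{section.existence generic}, together with the trivial genericity of loops and $2$-cycles.
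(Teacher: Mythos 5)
Your proposal is correct and follows essentially the same route as the paper's proof: iterate the block-triangular decomposition of Corollary~\ref{corollary.rank decomp} along the contraction sequence, factor the relevant minors via Proposition~\ref{proposition.factoring det into edge-injective}, classify the minimal contractible blocks (loop, $2$-cycle, or square) using Corollary~\ref{corollary.min contract}, and distinguish ``some maximal minor nonzero'' for the pieces $G_{S_i}$ from ``every maximal minor zero'' for $G_{k+1}'$. The only difference is one of detail: your Leibniz-expansion pinning argument explicitly justifies the minor-level factorization and the equivalence of $\rank M^{\mathsf{ext}}=R$ with every diagonal block being full rank --- a step the paper compresses into its citation of Corollary~\ref{corollary.rank decomp} --- and you correctly flag that the existence of generic labels for each piece is what makes the special locus proper.
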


\begin{remark}
    We can restate this theorem using the language of linear algebra. In the hypotheses, the condition that the collection of edge labels 
    $\{a_1, a_2, \ldots\}$ be in special position for $G_{S_i}$  is equivalent to  $M^{\mathsf{ext}}_{G_{S_i}}$ being not full rank.  Since 
    $e_{G_{S_i}} \leqslant 3 f_{G_{S_i}}$ this in turn means $\rank M^{\mathsf{ext}}_{G_{S_i}} < e_{G_{S_i}} $.

    In the conclusion, varying over every possible square submatrix $N$ of $M^{\mathsf{ext}}_{G_{k+1}'}$ is equivalent to making every possible 
    choice of $3f_{G_{k+1}'}$ edges in $G_{k+1}'$.  Either interpretation can be used to calculate the intersection of the hypersurfaces 
    $\cap_{N} \mathcal{H}_N$.
\end{remark}

\begin{proof}
The graphs $(G_{S_1}, G_{S_2}, \ldots, G_{S_k})$ and $(G_1', G_2', \ldots, G_{k+1}')$ describe an iterated version of the process in 
Corollary~\ref{corollary.rank decomp}, which decomposes the rank of the matrix $M^{\mathsf{ext}}$ into a sum of the ranks of $M^{\mathsf{ext}}_{G_i}$.  
So if any of those ranks are independently smaller than expected, so too is the rank of $M^{\mathsf{ext}}$.  Each rank condition characterizes an algebraic 
variety using, e.g., the Pl\"ucker embedding. Combining this with Proposition~\ref{proposition.factoring det into edge-injective} gives the explicit polynomial 
listed here whose zero locus describes the edge labels that are in special position for each $G_{S_i}$.  When the graph is minimal contractible, 
Corollary~\ref{corollary.min contract} classifies the matrices $M^{\mathsf{ext}}_{G_{S_i}}$ as one-column (for a single face that is a loop), two-column 
(for a single face with two edges), or square; we simplify the determinantal calculations in each case. Finally, if $G_{k+1}'$ has no proper contractible 
subset, then it is full rank if the determinantal condition holds {\em{for any}} square submatrix of 
$M^{\mathsf{ext}}_{G_{k+1}'}$.  Hence the edge labels $\{a_1, a_2, \ldots\}$ are in special position if and only if the determinant is zero {\em{for every}} 
possibly square submatrix, namely the edge labels lie in the intersection of the corresponding hypersurfaces.
\end{proof}

\begin{example} \label{example: need extra condition for rank additivity}
Figure~\ref{figure: contractibility requires assumption} shows why the theorem depends on the hypothesis that two minimal contractible subgraphs do not share an edge. On the left is a graph containing two different minimal contractible subgraphs that share an edge.  On the right is its extended cycle basis matrix, where the top triangle has been directed clockwise and the bottom triangle has been directed counterclockwise.  

Suppose that $a_1, a_2, a_3, a_4$ are all distinct and that $a_3=a_5$.  The rank of the matrix overall is $5$.  Suppose $G_1$ is the top triangle and $G\backslash G_1$ is the graph with two edges formed by contracting the top triangle.  Then the extended cycle basis matrix for $G_1$ is the top diagonal $3 \times 3$ block while that for $G \backslash G_1$ is the bottom diagonal $3 \times 2$ block.  Since $a_1, a_2, a_3$ are distinct, these two blocks have rank $3$ and $2$ respectively, as desired.  
\begin{figure}[h]
\begin{tikzpicture}
\draw (0,0) to (2,0);
\draw (1,-1) to (0,0);
\draw (1,-1) to (2,0);
\draw (1,1) to (0,0);
\draw (1,1) to (2,0);

\node at (1,.2){$e_3$};
\node at (.2,-.6){$e_4$};
\node at (1.8,-.6){$e_5$};
\node at (1.8,.6){$e_2$};
\node at (.2,.6){$e_1$};

\node at (0,0){$\bullet$};
\node at (2,0){$\bullet$};
\node at (1,-1){$\bullet$};
\node at (1,1){$\bullet$};
\node at (0,)[anchor=south east]{};
\end{tikzpicture} \hspace{1in} 
\raisebox{.4in}{$\left( \begin{array}{ccccc} 
1 & 1 & 1 & 0 & 0 \\
a_1 & a_2 & a_3 & 0 & 0 \\ 
a_1^2 & a_2^2 & a_3^2 & 0 & 0 \\
0 & 0 & -1 & -1 & -1 \\
0 & 0 & -a_3 & -a_4 & -a_5 \\
0 & 0 & -a_3^2 & -a_4^2 & -a_5^2
\end{array} \right)$}
\caption{A graph and its extended cycle basis matrix discussed in Example~\ref{example: need extra condition for rank additivity}.
This graph demonstrates the assumptions in Theorem~\ref{theorem: special position formula}.
\label{figure: contractibility requires assumption}}
\end{figure}

However, if instead we choose $G_2$ to be the bottom triangle, then the extended cycle basis matrix for $G \backslash G_2$ corresponds to the 
top $3 \times 2$ diagonal block while that for $G_2$ corresponds to the bottom $3 \times 3$ diagonal block.  With our assumptions on $a_i$, these 
ranks are $2$ and $2$ respectively, and $2 + 2 \neq 5$. 
\end{example}

\begin{example}
\label{example.successive contraction}
We analyze Figure~\ref{figure.non-generic} according to Theorem~\ref{theorem: special position formula}.  In this case, there is only one possible choice 
of minimal contractible subset at each step, with $S_1 = \{e_1, e_2\}$ and $S_2 = \{e_7, e_8, e_9\}$.  Figure~\ref{figure: repeated contracting minimal 
contractible subgraphs} has the original graph on the left with subgraph $G_{S_1}$ in red, the graph $G_2'$ in the middle with subgraph $G_{S_2}$ in red, 
and the graph $G_2'$ on the right with no proper contractible subgraph.
\begin{figure}[h]
\begin{center}
\begin{tikzpicture}[xscale=1.5,yscale=.5]
\draw[thick] (0,-2) to (2,-2);
\draw[thick] (0,2) to (2,2);
\draw[thick] (0,0) to (0,2);
\draw[thick] (0,0) to (0,-2);
\draw[thick] (2,0) to (2,-2);
\draw[thick] (2,0) to (2,2);
\draw[red, thick] plot [smooth cycle] coordinates {(0,0) (1,0.5) (2,0) (1,-0.5)};
\node at (1,2.4) {$e_9$};
\node[red] at (1,0.9){$e_1$};
\node[red] at (1,-0.9){$e_2$};
\node at (-0.2,-1){$e_3$};
\node at (-0.2,1){$e_7$};
\node at (2.2,-1){$e_4$};
\node at (2.2,1){$e_8$};
\node at (0.5,-2.4){$e_5$};
\node at (1,-2){$\bullet$};
\node at (1.5,-2.4){$e_6$};
\node at (0,-2){$\bullet$};
\node at (2,-2){$\bullet$};
\node at (0,2){$\bullet$};
\node at (0,-2){$\bullet$};
\node[red] at (0,0){$\bullet$};
\node[red] at (2,0){$\bullet$};
\node at (2,2){$\bullet$};
\node at (0,)[anchor=south east]{};
\end{tikzpicture} \hspace{0.5in}
\begin{tikzpicture}[xscale=1.5,yscale=.5]
\draw[thick] (0,-2) to (2,-2);
\draw[red, thick] (0,2) to (2,2);
\draw[red, thick] (1,0) to (0,2);
\draw[thick] (1,0) to (0,-2);
\draw[thick] (1,0) to (2,-2);
\draw[red, thick] (1,0) to (2,2);

\node[red] at (1,2.2) {$e_9$};
\node at (0.2,-1){$e_3$};
\node[red] at (0.2,1){$e_7$};
\node at (1.8,-1){$e_4$};
\node[red] at (1.8,1){$e_8$};
\node at (0.5,-2.4){$e_5$};
\node at (1,-2){$\bullet$};
\node at (1.5,-2.4){$e_6$};
\node at (0,-2){$\bullet$};
\node at (2,-2){$\bullet$};
\node[red] at (0,2){$\bullet$};
\node at (0,-2){$\bullet$};
\node[red] at (1,0){$\bullet$};
\node[red] at (2,2){$\bullet$};
\node at (0,)[anchor=south east]{};
\end{tikzpicture}
\hspace{0.5in}
\begin{tikzpicture}[xscale=1.5,yscale=.5]
\draw[thick] (0,-2) to (2,-2);
\draw[thick] (1,0) to (0,-2);
\draw[thick] (1,0) to (2,-2);

\node at (0.2,-1){$e_3$};
\node at (1.8,-1){$e_4$};
\node at (0.5,-2.4){$e_5$};
\node at (1,-2){$\bullet$};
\node at (1.5,-2.4){$e_6$};
\node at (0,-2){$\bullet$};
\node at (2,-2){$\bullet$};
\node at (0,-2){$\bullet$};
\node[red] at (1,0){$\bullet$};
\node at (0,)[anchor=south east]{};
\end{tikzpicture}
\end{center}
\caption{Successively contract minimal contractible subgraphs until what remains has no proper contractible subgraph
as discussed in Example~\ref{example.successive contraction}.
\label{figure: repeated contracting minimal contractible subgraphs}}
\end{figure}

Now we use Theorem~\ref{theorem: special position formula} to identify the special locus of edge labels.  The locus of special positions for the 
graph $G_{S_1}$ is the hypersurface consisting of solutions to $a_1-a_2=0$.  The locus of special positions for the graph $G_{S_2}$ is the 
hypersurface consisting of solutions to $(a_7-a_8)(a_7-a_9)(a_8-a_9)=0$.  Like $G_{S_1}$, this is simply the set of edge labels that are not all 
distinct.  However, the locus of special positions for $G_2'$ is the {\em{intersection}} of the hypersurfaces $\mathcal{H}_{i,j,k}$ given by 
$(a_i-a_j)(a_j-a_k)(a_i-a_k)$ for any three distinct $i, j, k \in \{3, 4, 5, 6\}$.  This has several components given by the different ways of 
partitioning $\{a_3, a_4, a_5, a_6\}$ into two nonempty subsets, each of which assumes the same value.  Of those components, the only one that 
can be realized as the dual to a planar triangulation are those with $a_4=a_5$ and $a_3=a_6$.
\end{example}

Finally, we note that Theorem~\ref{theorem: special position formula} applies to all graphs that are dual to plane triangulations.  This is an 
immediate consequence of Corollary~\ref{corollary: special polynomials for triangulations}.

\begin{corollary}
    Suppose that $\Delta$ is a plane triangulation and $G_{\Delta}$ is its dual graph.  The hypotheses of Theorem~\ref{theorem: special position formula} 
    apply to $G_{\Delta}$.  Furthermore, each minimal contractible graph $G_{S_i}$ is dual to a subdivided triangle and satisfies 
    $e_{G_{S_i}} = 3f_{G_{S_i}}$, and each quotient $G'_{i+1}$ is dual to the plane triangulation with the subdivision corresponding to $G_{S_i}$ erased. 
\end{corollary}

\section{Paths on faces and the set of all edge-injective functions in the minimal contractible case}
\label{section.existence and all edge-inj}

In this section we start showing the true power of minimal contractibility.  We assume as usual that $G$ is a finite, planar graph without leaves, and 
then add the hypothesis that $G$ contains no proper contractible subset of faces. 
We will show that under the hypothesis of minimal contractibility, there exists an edge-injective function $\varphi \colon \mathcal{F} \to \mathcal{E}_3$.  

\subsection{Existence of edge-injective functions in the minimal contractible case}
\label{section.existence}
To show that there exists an edge-injective function, we will build paths through the faces of a graph that allow us to successively modify a function 
$\varphi$ until it satisfies edge-injectivity.  We start with a definition.

\begin{definition}
Suppose that $F_0F_1F_2 \cdots F_k$ is a sequence of faces in $\mathcal{F}$ for which $F_i \cap F_{i+1}$ is an edge on both $F_i$ and $F_{i+1}$ 
for each $0\leqslant i<k$. Such a sequence is called a \defn{path of faces}.
\end{definition}

Note that if $G$ is planar, then a path of faces corresponds to an actual path in the dual graph.

\begin{theorem} 
\label{theorem: no proper contractible subset means maximal edge-injective function}
If $G$ contains no proper contractible subset of faces, then there is a function $\varphi$ from the set $\mathcal{F}$ of bounded faces of $G$ to the 
set $2^{E}$ of subsets of edges of $G$ so that:
\begin{enumerate}
\item  For each face $F$ the image $\varphi(F)$ contains at most three edges.
\item For each bounded face $F$ the image $\varphi(F)$ consists of edges on the boundary of $F$ with no edge used twice, in the sense that 
$\varphi(F)\cap \varphi(F')=\emptyset$ for any two faces $F$ and $F'$.
\item If there is a face $F$ with $|\varphi(F)|<3$ then every edge of $G$ is contained in $\varphi({F})$.
\end{enumerate}
\end{theorem}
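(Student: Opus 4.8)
The plan is to recast the construction of $\varphi$ as a bipartite assignment problem, equivalently an integral flow problem, and then to read off the hypothesis ``no proper contractible subset of faces'' as exactly the min-cut (Hall-type) condition required. First I would build a bipartite graph $H$ whose left vertices are the bounded faces $\mathcal{F}$ and whose right vertices are the edges $E$ of $G$ (every edge lies on some face, as $G$ has no leaves), joining a face $F$ to an edge $e$ precisely when $e$ lies on the boundary of $F$. A function $\varphi$ satisfying conditions (1) and (2) is then the same datum as a subgraph of $H$ in which each face $F$ has degree at most $3$ and each edge $e$ has degree at most $1$. This in turn is an integral flow in the network with a source $s$ joined to each face by an arc of capacity $3$, each arc $F\to e$ of capacity $\infty$, and each edge $e$ joined to a sink $t$ by an arc of capacity $1$; the flow value is $\sum_{F}|\varphi(F)|$.

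Next I would match conditions (1)--(3) to the single statement that the maximum flow value equals $\min\{3f_G,\,e_G\}$. If the maximum value is $3f_G$, every source arc is saturated, so $|\varphi(F)|=3$ for all $F$ and condition (3) holds vacuously. If instead the maximum value is $e_G<3f_G$, every edge-to-sink arc is saturated, so every edge of $G$ appears in some $\varphi(F)$, which is precisely condition (3). Integrality of maximum flows with integer capacities makes each $F\to e$ flow equal to $0$ or $1$, so $\varphi$ is well defined in either case. Thus it suffices to compute the maximum flow.

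I would then apply the max-flow--min-cut theorem. Because the arcs $F\to e$ have infinite capacity, any finite cut is determined by the set $A\subseteq\mathcal{F}$ of faces retained on the source side; all boundary edges of $A$ must then also lie on the source side, and the cheapest such cut has capacity $3(f_G-|A|)+e_A$, where $e_A=|E(G_A)|$ counts the edges lying on faces in $A$. The crucial input is that for every proper subset $A\subsetneq\mathcal{F}$ the no-proper-contractible hypothesis forces $e_A-3f_A>0$ (Definition~\ref{definition.contractible}), and since each face of $A$ survives as a distinct bounded face of the subgraph $G_A$ we have $f_A\geq |A|$, whence $e_A>3f_A\geq 3|A|$. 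Substituting into the cut capacity gives $3(f_G-|A|)+e_A=3f_G+(e_A-3|A|)>3f_G\geq\min\{3f_G,e_G\}$ for every proper $A$, while the extreme choices $A=\emptyset$ and $A=\mathcal{F}$ realize the capacities $3f_G$ and $e_G$ respectively. Hence the minimum cut, and therefore the maximum flow, equals $\min\{3f_G,e_G\}$, completing the argument.

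The step I expect to be the main obstacle is not the flow machinery but pinning down the inequality $f_A\geq|A|$, namely that distinct faces of $G$ remain distinct bounded faces of the (not-necessarily-induced) subgraph $G_A$, cf.\ Remark~\ref{remark.not-induced-subgraph}; this is where planarity and the absence of holes enter, and it is what makes the contractibility hypothesis line up with \emph{strict} Hall-type inequalities rather than merely non-strict ones. I would verify it by observing that the open region of each face $F\in A$ contains no edge of $G$, hence none of $G_A$, so $F$ is a bounded face of $G_A$. The two boundary cuts $A=\emptyset$ and $A=\mathcal{F}$ must also be checked explicitly so that the value $\min\{3f_G,e_G\}$ is actually attained; once the cut computation is secured, reading off $\varphi$ from an integral optimal flow is routine.
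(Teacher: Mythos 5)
Your proof is correct, but it reaches the conclusion by a different (dual) route from the paper. The paper constructs $\varphi$ greedily and augments it along ``$\varphi$-compatible paths of faces'': when a face $F$ with $|\varphi(F)|<3$ cannot reach an unused edge, the set $S_F$ of faces reachable from $F$ is shown to satisfy $e_{G_{S_F}}-3f_{G_{S_F}}<0$, contradicting the hypothesis. In flow language, the paper is running Ford--Fulkerson by hand --- its compatible paths are exactly augmenting paths in your network, and the stuck set $S_F$ is exactly the source side of a violated cut --- whereas you skip the algorithm and certify the value $\min\{3f_G,e_G\}$ directly via max-flow--min-cut, reading the ``no proper contractible subset'' hypothesis as the strict cut inequality $3(f_G-|A|)+e_A>3f_G$, which you correctly derive from $e_A>3f_A\geqslant 3|A|$ (and your justification of $f_A\geqslant|A|$, that each face of $A$ persists as a bounded face of the non-induced subgraph $G_A$, is exactly the point that needs care and is sound). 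Your version is shorter, makes transparent that the theorem is a defect form of a Hall-type matching statement, and handles the trichotomy in condition (3) cleanly through the value $\min\{3f_G,e_G\}$; the paper's constructive version is what directly powers the greedy algorithm of Section~\ref{section.algorithm}, so it earns its extra length. One shared caveat: you assert that leaflessness forces every edge to lie on a bounded face, which fails for a bridge joining two cycles; the paper's proof makes the same tacit assumption (its unused edge $e$ is assumed to bound a face), so this is an imprecision in the ambient hypotheses rather than a gap in your argument, but it is worth flagging since condition (3) as literally stated quantifies over all edges of $G$.
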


\begin{remark}
\label{remark.restatement of existence}
Note that the conditions in Theorem~\ref{theorem: no proper contractible subset means maximal edge-injective function} can be rewritten as follows: 
Either $\varphi$ is edge-injective or every edge of $G$ is contained in $\varphi(F)$ for some face $F$.
\end{remark}

\begin{proof}
We will show that if $\varphi$ is a function satisfying conditions (1) and (2) but $|\varphi(F)|<3$ then  
\begin{itemize}
\item either $\varphi$ uses all edges of $G$, or
\item $\varphi$ can be modified to create a function $\varphi'$ satisfying conditions (1) and (2), such that the image of $\varphi'$ is strictly contained in
the image of $\varphi$ and $|\varphi'(F)|>|\varphi(F)|$.
\end{itemize}

First suppose $\varphi$ is a function satisfying conditions (1) and (2) and that there is a face $F$ with both $|\varphi(F)| < 3$ and an edge $e$ on the 
boundary of $F$ that is not yet in the image of $\varphi,$ namely 
\[
	e \not \in \bigcup_{F \in \mathcal{F}} \varphi(F).
\] 
Then we can construct another function $\varphi'$ satisfying conditions (1) and (2) whose image is {\it strictly larger} than that of $\varphi$ by the rule that
\[
	\varphi'(F') = \begin{cases}
	\varphi(F') & \textup{ if } F' \neq F,\\
	\varphi(F) \cup \{e\} & \textup{ if } F' = F. 
	\end{cases}
\]

We now generalize this argument to the case when the unused edge is not on the face without three edges in its image. 
Consider a path of faces $F_0 F_1 \ldots F_k$ such that the edge $F_i \cap F_{i+1}$ is in the image $\varphi(F_i)$ for each $0\leqslant i<k$.
Let us call this a \defn{$\varphi$-compatible path of faces}.

Assume that there is at least one edge unused by $\varphi$ meaning the image of $\varphi$ does not contain all edges of $G$.  Suppose there 
is a face $F$ for which $|\varphi(F)|<3$. Define the subset $S_F \subseteq \mathcal{F}$ containing $F$ together with all faces $F'$ that have a 
$\varphi$-compatible path of faces from $F'$ to $F$.  By definition $S_F$ contains $F$.  
Consider the graph $G_{S_F}$ associated to this subset of faces. By construction, it is connected.

\smallskip
\noindent \textbf{Claim:} If $e'$ is an edge on the boundary of a face in $S_F$ as well as a face in $S_F^c$, then $e'  \in \varphi(S_F)$.  

\smallskip
\noindent
Indeed, if there is a face $F' \not \in S_F$ with $e' \in \varphi(F')$ and $e' = F' \cap F''$ for some face $F'' \in S_F$, then the $\varphi$-compatible
path from $F''$ to $F$ can be extended to a $\varphi$-compatible path $F'  F'' \ldots F$ using the edge $e'$ as the first step 
in the path. Put differently, we have
\[
	\varphi(S_F) = \varphi(\mathcal{F}) \cap G_{S_F}.
\]

Next we show by contradiction that there must be an edge of $G_{S_F}$ that is {\it not} in the image of $\varphi$. Suppose by contradiction that all 
edges of $G_{S_F}$ are in the image of $\varphi$.  We assumed that not all edges of $G$ are in the image of $\varphi$ (else $\varphi$ is the function 
claimed by the theorem).  Our first conclusion is that $S_F$ is a proper subset of $\mathcal{F}$ since $S_F$ contains $F$ but 
by the above claim no face with $e$ on its boundary is in $S_F$ so $S_F^c$ contains the face(s) bounded by $e$.

If $G_{S_F}$ has no unused edges then we further conclude that $\varphi|_{G_{S_F}}$ satisfies conditions (1), (2), and (3).  However we also assumed 
$|\varphi(F)|<3$ for some face.  This means the image of $\varphi$ contains all edges of $G_{S_F}$ but does not contain 3 edges for each face, 
namely $e_{G_{S_F}}-3f_{G_{S_F}}<0$.  Thus $G_{S_F}$ is contractible, contradicting the assumption that $G$ is has no proper contractible subset of 
faces.

It follows that if $\varphi(\mathcal{F})$ does not contain all edges of $G$ and also if there exists a face $F$ with $|\varphi(F)|<3$, then $G_{S_F}$ 
has an edge $e$ that is unused by $\varphi$ meaning $e$ is not in the image of $\varphi$ or equivalently not in the image of $\varphi|_{G_{S_F}}$.  
Let $F_0$ be a face in $S_F$ that $e$ bounds and consider any $\varphi$-compatible path of faces $F_0F_1 \ldots F_{k-1}F_k$ from $F_0$ to $F_k=F$.  
At least one such path exists by definition of $S_F$.  

From the $\varphi$-compatible path of faces $F_0 F_1 \ldots F_k$ create a function $\varphi'$ on the faces of $G$ whose image is 
{\it strictly larger} than the image of $\varphi$ by the rule that
\[
	\varphi'(F') = \begin{cases}
	\varphi(F') & \textup{ if } F' \textup{ is not on the sequence } F_0F_1 \ldots F_k,\\
	\varphi(F_0) \cup \{e\} - \{F_0 \cap F_{1}\} & \textup{ if } F' = F_0,\\
	\varphi(F_i) \cup \{F_{i-1} \cap F_i\} - \{F_i \cap F_{i+1}\} & \textup{ if } F' = F_i \textup{ for } 0<i<k,\\
	\varphi(F_k) \cup \{F_{k-1} \cap F_k\}  & \textup{ if } F' = F_k = F.
	\end{cases}
\]
Note that $\varphi'$ still satisfies conditions (1) and (2) and has one more edge in its image. Moreover $|\varphi'(F)| > |\varphi(F)|$.  

If $\varphi'$ either uses all edges or $|\varphi'(\mathcal{F})|=3f_G$, then we have proven the claim. Otherwise we can repeat the previous argument 
using $\varphi'$ in place of $\varphi$. Thus if $G$ is a graph with no contractible subset of faces we can continue to expand the function $\varphi$ 
until either it is edge-injective or its image contains all edges in $G$.  This proves the claim.
\end{proof}

\begin{remark}
Note that for the proof of Theorem~\ref{theorem: no proper contractible subset means maximal edge-injective function} it was not essential
that edge-injective functions select three edges for each face. The results of this section go through with assuming that 
$|\varphi(F)|<m$ for any positive integer $m$ if accordingly a contractible subset $S$ of faces is defined as 
$e_S \leqslant m f_S$.  This could be useful when generalizing to non-generic edge-labelings.
\end{remark}

\begin{corollary}
\label{cor.existence}
If $G$ is a finite, planar graph without leaves that contains no proper contractible subset of faces, it has an edge-injective function whenever 
$e_G\geqslant 3f_G$.
\end{corollary}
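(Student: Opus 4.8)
The plan is to deduce the corollary directly from Theorem~\ref{theorem: no proper contractible subset means maximal edge-injective function} by a short edge-counting argument; the genuinely hard work has already been carried out in that theorem, so the corollary amounts to ruling out the degenerate alternative in its conclusion. First I would invoke the theorem, which applies since $G$ is a finite planar graph without leaves and has no proper contractible subset of faces, to obtain a function $\varphi \colon \mathcal{F} \to 2^{E}$ satisfying conditions (1), (2), and (3). By Remark~\ref{remark.restatement of existence}, this $\varphi$ is either already edge-injective---in which case there is nothing left to prove---or every edge of $G$ lies in the image of $\varphi$.

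So the only case I would need to handle is the latter. Assume every edge of $G$ is used by $\varphi$ and suppose, toward a contradiction, that $\varphi$ is \emph{not} edge-injective. Then there is a face $F_0$ with $|\varphi(F_0)| < 3$. The key quantitative input is condition (2): the sets $\varphi(F)$ are pairwise disjoint subsets of $E$, so the total number of edges appearing in the image of $\varphi$ is exactly $\sum_{F \in \mathcal{F}} |\varphi(F)|$. Since we are assuming every edge is used, this sum equals $e_G$.

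On the other hand, condition (1) gives $|\varphi(F)| \leqslant 3$ for every face, while our assumption gives $|\varphi(F_0)| \leqslant 2$. Hence
\[
	e_G = \sum_{F \in \mathcal{F}} |\varphi(F)| \leqslant 3(f_G - 1) + 2 = 3f_G - 1,
\]
which contradicts the hypothesis $e_G \geqslant 3f_G$. Therefore no such face $F_0$ can exist, so $|\varphi(F)| = 3$ for every face $F$. Combined once more with the disjointness from condition (2), this says precisely that $\varphi$ is an edge-injective function $\mathcal{F} \to \mathcal{E}_3$ in the sense of Section~\ref{section.edge injective}.

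I expect essentially no obstacle in this argument: the substantive content is Theorem~\ref{theorem: no proper contractible subset means maximal edge-injective function} itself, and the corollary is a one-line consequence once the edge count is set up. The only point that requires care is to read condition (3) (equivalently Remark~\ref{remark.restatement of existence}) correctly as the dichotomy ``either edge-injective or surjective onto all edges,'' and then to translate ``surjective onto all $e_G$ edges'' into the equality $\sum_{F} |\varphi(F)| = e_G$ using the pairwise disjointness guaranteed by condition (2). The hypothesis $e_G \geqslant 3f_G$ enters only in this final inequality, which is exactly where it is needed.
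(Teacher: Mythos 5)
Your proposal is correct and follows essentially the same route as the paper: the paper's own proof is a one-line appeal to Theorem~\ref{theorem: no proper contractible subset means maximal edge-injective function} and Remark~\ref{remark.restatement of existence}, leaving implicit exactly the disjointness-based edge count $\sum_F |\varphi(F)| \leqslant 3f_G - 1$ that you spell out to rule out the ``all edges used but some face short'' branch. Filling in that counting step is the right reading of the dichotomy, and nothing further is needed.
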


\begin{proof}
By Theorem~\ref{theorem: no proper contractible subset means maximal edge-injective function} and Remark~\ref{remark.restatement of existence},
there exists an edge-injective function if $e_G\geqslant 3f_G$.
\end{proof}

\subsection{The set of all edge-injective functions in the minimal contractible case}
\label{section.all edge injective}

In the previous section, we proved that under the hypothesis of minimal contractibility, there exists \textit{an} edge-injective function.  Now, using the 
same hypothesis, we describe \textit{all} edge-injective functions on the graph.  Our key tool is a modified dual graph $G^\star$ to $G$ that tracks 
edges shared between faces but not boundary edges. This graph $G^\star$ allows us to keep track of paths on faces and deepen the 
analysis in Section~\ref{section.existence}. Our key result is Lemma~\ref{lemma.directed cycles}, which says essentially that the edge-injective functions 
on $G$ correspond to the choice of certain cycles in $G^\star$.  Theorem~\ref{connected} refines this to show that any edge-injective function on $G$ 
can be obtained from any other edge-injective function on $G$ by swapping certain edges in a succession of cycles in $G^\star$.

The case when the graph $G_{\Delta}$ is dual to a planar triangulation $\Delta$ is of particular interest.
Remark~\ref{remark: modified dual when $G$ comes from triangulation} describes this situation.

Fix a planar embedding of $G$ with the same conditions as in Section~\ref{section.existence}, so  we know there exists an edge-injective function 
$\varphi \colon \mathcal{F} \to \mathcal{E}_3$ if $e_G \geqslant 3f_G$. Without loss of 
generality we may assume that $e_G = 3f_G$; otherwise we may restrict to the edges in $G$ in the image of $\varphi$.

We define $G^\star$ to be the modification of the usual dual graph of $G$ for which:
\begin{itemize}
    \item every bounded face $F\in \mathcal{F}$ of $G$ is a vertex in $G^\star$,
    \item there is an edge between $F$ and $F'$ in $G^\star$ if $F$ and $F'$ share an edge in $G$,
    \item every edge belonging to only one bounded face $F$ in $G$ is a loop in $G^\star$.
\end{itemize} 

\begin{remark}
\label{remark: modified dual when $G$ comes from triangulation}
    If the graph $G$ is in fact the dual $G_{\Delta}$ to a planar triangulation $\Delta$ then $G_{\Delta}{\star}$ consists of the subgraph of $\Delta$ 
    induced by the interior vertices, together with loops at each vertex whenever $\Delta$ has an edge to the boundary.
\end{remark}
Let $\Phi$ be the set of edge-injective functions $\varphi$ on $G$ with image $\mathcal{E}_3$.
In $G^\star$, an edge-injective function $\varphi \in \Phi$ can be viewed as assigning a direction to each edge in $G^\star$: if an edge in $G$ is associated 
to face $F \in \mathcal{F}$ under $\varphi$, then the edge in 
$G^\star$ is directed towards $F$. Note that under this correspondence, every vertex $F$ in $G^\star$ has exactly three arrows pointing towards $F$.
From now on, we interchangeably view elements in $\Phi$ as edge-injective functions on $G$ or as an assignment of directed arrows on $G^\star$ such
that every vertex in $G^\star$ has three incoming edges. We denote an edge-injective function $\varphi$ on $G$ in the dual setting as $G^\star_\varphi$.
\begin{example}
\label{example.phi dual}
Consider the planar graph $G$ on the left  in Figure~\ref{figure: edge inj example and dual graph}, with edges labeled $e_1,\ldots,e_{15}$.
\begin{figure}
\begin{center}
\usetikzlibrary {arrows.meta,bending,positioning} 
\scalebox{0.75}{\begin{tikzpicture}
\draw[thick] (0,2) to (0,0);
\node at (0,0){$\bullet$};
\node at (0,2){$\bullet$};
\draw[thick] (0,0) to (1.5,-1.32);
\node at (1.5,-1.32){$\bullet$};
\draw[thick] (0,0) to (-1.5,-1.32);
\node at (-1.5,-1.32){$\bullet$};
\draw[thick] (-1.5,0.68) to (-1.5,-1.32);
\node at (-1.5,0.68){$\bullet$};
\draw[thick] (1.5,0.68) to (1.5,-1.32);
\node at (1.5,0.68){$\bullet$};
\draw[thick] (0,2) to (1.5,0.68);
\draw[thick] (0,2) to (-1.5,0.68);
\draw[thick] (0,-2.64) to (1.5,-1.32);
\node at (0,-2.64){$\bullet$};
\node at (0.75,-2){$\bullet$};
\draw[thick] (0,-2.64) to (-1.5,-1.32);
\node at (-0.85,1.7) {\color{darkred}{\Large $e_1$}};
\node at (-1.8,-0.1){\color{darkred}{\Large $e_2$}};
\node at (-0.3,0.7){\color{blue}{\Large $e_4$}};
\node at (-0.6,-0.85){\color{darkred}{\Large $e_3$}};
\node at (-1,-2.1){\color{darkgreen}{\Large $e_5$}};
\node at (1.3,-1.9){\color{orange}{\Large $e_7$}};
\node at (1,-0.5){\color{darkgreen}{\Large $e_8$}};
\node at (0.8,1.7){\color{blue}{\Large $e_9$}};
\node at (1.8,-0.3){\color{blue}{\Large $e_{10}$}};
\node at (0.6,-2.5){\color{darkgreen}{\Large $e_6$}};
\draw[thick] (0,-2.64) to (1.5,-3.96);
\node at (1.5,-3.96){$\bullet$};
\draw[thick] (1.5,-1.32) to (3,-2.64);
\node at (3,-2.64){$\bullet$};
\draw[thick] (3,-2.64) to (1.5,-3.96);
\draw[thick] (1.5,0.68) to (3,-0.64);
\node at (3,-0.64){$\bullet$};
\draw[thick] (3,-0.64) to (3,-2.64);
\node at (2.5,0.3){\Large $e_{11}$};
\node at (3.3,-1.5){\Large $e_{12}$};
\node at (2.5,-1.8){\Large $e_{13}$};
\node at (2.6,-3.5){\color{orange}{\Large $e_{14}$}};
\node at (0.5,-3.5){\color{orange}{\Large $e_{15}$}};
\node at (0,)[anchor=south east]{};
\end{tikzpicture}
}
\hspace{1in}
\raisebox{0.2in}{\begin{tikzpicture}[auto]
\node (F3) at (1,1){$F_3$};
\node (F1) at (-1,1){$F_1$};
\node (F2) at (0,0){$F_2$};
\node (F4) at (2,0){$F_4$};
\node (F5) at (1,-1){$F_5$};
\node (alpha) at (0,0.7){$\alpha$};
\node (beta) at (1,-0){$\beta$};
\node (gamma) at (0.45,-0.45){$\gamma$};
\path (F1) edge [thick,loop left] (F1);
\path (F1) edge [thick,loop above] (F1);
\path (F2) edge [thick,loop left] (F2);
\path (F3) edge [thick,loop above] (F3);
\path (F4) edge [thick,loop above] (F4);
\path (F4) edge [thick,loop below] (F4);
\path (F5) edge [thick,loop below] (F5);
\path (F5) edge [thick,loop right] (F5);
\draw[edge,thick] (F1) -- (F3);
\draw[edge,thick] (F2) -- (F1);
\draw[edge,thick] (F3) -- (F2);
\draw[edge,thick] (F4) -- (F3);
\draw[edge,thick] (F5) -- (F4);
\draw [edge, thick] (F2) edge [bend left=30] (F5);
\draw [edge, thick] (F5) edge [bend left=30] (F2);
\node at (0,)[anchor=south east]{};
\end{tikzpicture}
}
\end{center}
\caption{Edge-injective function $\varphi$ on $G$ and $G^\star_\varphi$ on the dual graph $G^\star$} \label{figure: edge inj example and dual graph}
\end{figure}
The colors of the edges in the graph on the left indicate an edge-injective function $\varphi$, where the red edges belong to the face 
$\color{darkred}{F_1}$ given by $e_1 e_2 e_3 e_4$, 
the green edges belong to the face $\color{darkgreen}{F_2}$ given by $e_3 e_5 e_6 e_7 e_8$, the blue edges belong to face $\color{blue}{F_3}$
given by $e_4 e_8 e_{10} e_9$, the black edges belong to the face $F_4$ given by $e_{10} e_{11} e_{12} e_{13}$, and finally
the orange edges belong to the face $\color{orange}{F_5}$ given by $e_7 e_6 e_{15} e_{14} e_{13}$.  Under the correspondence given above, this 
edge-injective function in the dual graph $G^\star$ is given by $G^\star_\varphi$ shown in the right of Figure~\ref{figure: edge inj example and dual graph}.
\end{example}

\begin{definition}\label{definition: edge inj on dual graph}
Define a graph $G_\Phi$ with vertex set $\Phi$ and an edge between $G^\star_\varphi$ and $G^\star_{\varphi'}$ in $\Phi$ if 
$G^\star_\varphi$ and $G^\star_{\varphi'}$ only differ by reversing the arrows on a directed face cycle in $G^\star$. 
\end{definition}

Similar ideas of reversing cycles have appeared in~\cite{WW.1987} and the unpublished~\cite{Wh.unpublished}.

\begin{example}
The graph $G_{\Phi}$ for the graph in Example~\ref{example.phi dual} is given in Figure~\ref{figure.GPhi}.
\end{example}

\begin{figure}[h]
\begin{center}
\scalebox{0.9}{
\begin{tikzpicture}[auto]
\node (D1) at (0,-5){\scalebox{0.8}{
\begin{tikzpicture}[auto]
\node (F3) at (1,1){$F_3$};
\node (F1) at (-1,1){$F_1$};
\node (F2) at (0,0){$F_2$};
\node (F4) at (2,0){$F_4$};
\node (F5) at (1,-1){$F_5$};
\node (alpha) at (0,0.7){\textcolor{red}{$\alpha$}};
\node (beta) at (1,-0){\textcolor{blue}{$\beta$}};
\node (gamma) at (0.45,-0.45){\textcolor{darkgreen}{$\gamma$}};
\path (F1) edge [thick,loop left] (F1);
\path (F1) edge [thick,loop above] (F1);
\path (F2) edge [thick,loop left] (F2);
\path (F3) edge [thick,loop above] (F3);
\path (F4) edge [thick,loop above] (F4);
\path (F4) edge [thick,loop below] (F4);
\path (F5) edge [thick,loop below] (F5);
\path (F5) edge [thick,loop right] (F5);
\draw[edge,thick] (F1) -- (F3);
\draw[edge,thick] (F2) -- (F1);
\draw[edge,thick] (F3) -- (F2);
\draw[edge,dotted] (F3) -- (F2);
\draw[edge,thick] (F4) -- (F3);
\draw[edge,thick] (F5) -- (F4);
\draw [edge, thick] (F2) edge [bend left=30] (F5);
\draw [edge, thick] (F2) edge [bend left=30] (F5);
\draw [edge, thick] (F5) edge [bend left=30] (F2);
\node at (0,)[anchor=south east]{};
\end{tikzpicture}}
};
\node (D2) at (6,-5){\scalebox{0.8}{
\begin{tikzpicture}[auto]
\node (F3) at (1,1){$F_3$};
\node (F1) at (-1,1){$F_1$};
\node (F2) at (0,0){$F_2$};
\node (F4) at (2,0){$F_4$};
\node (F5) at (1,-1){$F_5$};
\node (alpha) at (0,0.7){\textcolor{red}{$\alpha$}};
\node (beta) at (1,-0){\textcolor{blue}{$\beta$}};
\node (gamma) at (0.45,-0.45){\textcolor{darkgreen}{$\gamma$}};
\path (F1) edge [thick,loop left] (F1);
\path (F1) edge [thick,loop above] (F1);
\path (F2) edge [thick,loop left] (F2);
\path (F3) edge [thick,loop above] (F3);
\path (F4) edge [thick,loop above] (F4);
\path (F4) edge [thick,loop below] (F4);
\path (F5) edge [thick,loop below] (F5);
\path (F5) edge [thick,loop right] (F5);
\draw[edge,thick] (F1) -- (F3);
\draw[edge,thick] (F2) -- (F1);
\draw[edge,thick] (F3) -- (F2);
\draw[edge,thick] (F4) -- (F3);
\draw[edge,thick] (F5) -- (F4);
\draw [edge, thick] (F5) edge [bend right=30] (F2);
\draw [edge, thick] (F2) edge [bend right=30] (F5);
\node at (0,)[anchor=south east]{};
\end{tikzpicture}}
};
\node (D3) at (0,0){\scalebox{0.8}{
\begin{tikzpicture}[auto]
\node (F3) at (1,1){$F_3$};
\node (F1) at (-1,1){$F_1$};
\node (F2) at (0,0){$F_2$};
\node (F4) at (2,0){$F_4$};
\node (F5) at (1,-1){$F_5$};
\node (alpha) at (0,0.7){\textcolor{red}{$\alpha$}};
\node (beta) at (1,-0){\textcolor{blue}{$\beta$}};
\node (gamma) at (0.45,-0.45){\textcolor{darkgreen}{$\gamma$}};
\path (F1) edge [thick,loop left] (F1);
\path (F1) edge [thick,loop above] (F1);
\path (F2) edge [thick,loop left] (F2);
\path (F3) edge [thick,loop above] (F3);
\path (F4) edge [thick,loop above] (F4);
\path (F4) edge [thick,loop below] (F4);
\path (F5) edge [thick,loop below] (F5);
\path (F5) edge [thick,loop right] (F5);
\draw[edge,thick] (F3) -- (F1);
\draw[edge,thick] (F1) -- (F2);
\draw[edge,thick] (F2) -- (F3);
\draw[edge,thick] (F4) -- (F3);
\draw[edge,thick] (F5) -- (F4);
\draw [edge, thick] (F2) edge [bend left=30] (F5);
\draw [edge, thick] (F5) edge [bend left=30] (F2);
\node at (0,)[anchor=south east]{};
\end{tikzpicture}}
};
\node (D4) at (6,0){\scalebox{0.8}{
\begin{tikzpicture}[auto]
\node (F3) at (1,1){$F_3$};
\node (F1) at (-1,1){$F_1$};
\node (F2) at (0,0){$F_2$};
\node (F4) at (2,0){$F_4$};
\node (F5) at (1,-1){$F_5$};
\node (alpha) at (0,0.7){\textcolor{red}{$\alpha$}};
\node (beta) at (1,-0){\textcolor{blue}{$\beta$}};
\node (gamma) at (0.45,-0.45){\textcolor{darkgreen}{$\gamma$}};
\path (F1) edge [thick,loop left] (F1);
\path (F1) edge [thick,loop above] (F1);
\path (F2) edge [thick,loop left] (F2);
\path (F3) edge [thick,loop above] (F3);
\path (F4) edge [thick,loop above] (F4);
\path (F4) edge [thick,loop below] (F4);
\path (F5) edge [thick,loop below] (F5);
\path (F5) edge [thick,loop right] (F5);
\draw[edge,thick] (F3) -- (F1);
\draw[edge,thick] (F1) -- (F2);
\draw[edge,thick] (F2) -- (F3);
\draw[edge,thick] (F4) -- (F3);
\draw[edge,thick] (F5) -- (F4);
\draw [edge, thick] (F5) edge [bend right=30] (F2);
\draw [edge, thick] (F2) edge [bend right=30] (F5);
\node at (0,)[anchor=south east]{};
\end{tikzpicture}}
};
\node (D5) at (-6,-5){\scalebox{0.8}{
\begin{tikzpicture}[auto]
\node (F3) at (1,1){$F_3$};
\node (F1) at (-1,1){$F_1$};
\node (F2) at (0,0){$F_2$};
\node (F4) at (2,0){$F_4$};
\node (F5) at (1,-1){$F_5$};
\node (alpha) at (0,0.7){\textcolor{red}{$\alpha$}};
\node (beta) at (1,-0){\textcolor{blue}{$\beta$}};
\node (gamma) at (0.45,-0.45){\textcolor{darkgreen}{$\gamma$}};
\path (F1) edge [thick,loop left] (F1);
\path (F1) edge [thick,loop above] (F1);
\path (F2) edge [thick,loop left] (F2);
\path (F3) edge [thick,loop above] (F3);
\path (F4) edge [thick,loop above] (F4);
\path (F4) edge [thick,loop below] (F4);
\path (F5) edge [thick,loop below] (F5);
\path (F5) edge [thick,loop right] (F5);
\draw[edge,thick] (F1) -- (F3);
\draw[edge,thick] (F2) -- (F1);
\draw[edge,thick] (F2) -- (F3);
\draw[edge,thick] (F3) -- (F4);
\draw[edge,thick] (F4) -- (F5);
\draw [edge, thick] (F5) edge [bend right=30] (F2);
\draw [edge, thick] (F5) edge [bend left=30] (F2);
\node at (0,)[anchor=south east]{};
\end{tikzpicture}}
};
\draw[darkgreen] (D1) to (D2);
\draw[blue] (D1) to (D5);
\draw[red] (D1) to (D3);
\draw[darkgreen] (D3) to (D4);
\draw[red] (D4) to (D2);
\end{tikzpicture}}
\end{center}
\caption{The graph $G_{\Phi}$ for the graph $G$ in Example~\ref{example.phi dual}.
\label{figure.GPhi}}
\end{figure}

We now show that $G_\Phi$ is a connected graph. In other words, we can construct all edge-injective functions on $G$ from a given 
$\varphi \in \Phi$ by successively reversing directed face cycles in $G^\star_\varphi$.

\begin{lemma}
\label{lemma.directed cycles}
Let $\varphi, \varphi' \in \Phi$. Then $G^\star_\varphi$ is obtained from $G^\star_{\varphi'}$ by successively reversing directed cycles.
\end{lemma}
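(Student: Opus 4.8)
The plan is to view each edge-injective function as an orientation of $G^\star$ and to exploit the fact that both orientations have the same in-degree sequence. Since we have reduced to the case $e_G = 3f_G$, every $\varphi \in \Phi$ uses all edges of $G$, so $G^\star_\varphi$ and $G^\star_{\varphi'}$ are both orientations of the \emph{same} underlying graph $G^\star$ in which every vertex $F \in \mathcal{F}$ has exactly three incoming arrows (including any loops, which are forced to point inward at their unique face and therefore carry the same orientation in every element of $\Phi$).

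First I would record the key balancing observation. Let $D \subseteq E(G^\star)$ be the set of edges on which $G^\star_\varphi$ and $G^\star_{\varphi'}$ disagree, oriented according to $\varphi$, and let $H$ be the resulting directed subgraph. Fix a vertex $F$. Among the edges at $F$ on which the two orientations agree, exactly the same ones are incoming in both; call this number $s$. Let $a$ be the number of edges at $F$ that are incoming under $\varphi$ but outgoing under $\varphi'$, and let $b$ be the number that are outgoing under $\varphi$ but incoming under $\varphi'$. Computing the in-degree of $F$ in each orientation gives $s+a = 3 = s+b$, whence $a=b$. Thus in $H$ every vertex has equal in-degree and out-degree, so $H$ is balanced and decomposes into a collection of edge-disjoint directed cycles $C_1, C_2, \ldots, C_m$.

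Next I would reverse these cycles one at a time. Reversing the edges of a directed cycle preserves the in-degree of every vertex it meets --- each such vertex has one cycle-edge in and one out, and reversal merely swaps them --- so after reversing any subcollection of the $C_j$ the resulting orientation still has in-degree three at every vertex and hence still corresponds to an element of $\Phi$. Reversing all of $C_1, \ldots, C_m$ flips exactly the edges of $D$, turning $G^\star_\varphi$ into $G^\star_{\varphi'}$. Carrying out these reversals successively therefore produces a sequence of edge-injective functions starting at $\varphi$ and ending at $\varphi'$, each step a single directed-cycle reversal, which is exactly the claim.

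I expect the one point needing care to be the bookkeeping around loops and the reduction $e_G = 3f_G$: one must confirm that loops never enter the difference set $D$ (so they are never reversed) and that, because every edge is used by every function in $\Phi$ once $e_G = 3f_G$, both orientations really are defined on the same underlying $G^\star$. Everything else is the standard fact that two orientations with identical in-degree sequences differ by a union of directed cycles, made precise by the counting in the previous paragraph. The refinement from arbitrary directed cycles to directed \emph{face} cycles, needed for the graph $G_\Phi$ in Theorem~\ref{connected}, is deferred and is not required here.
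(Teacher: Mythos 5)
Your proposal is correct and follows essentially the same route as the paper: both arguments rest on the observation that, since every vertex has in-degree three under each of $\varphi$ and $\varphi'$, the set of disagreeing edges is balanced at every vertex and therefore organizes itself into directed cycles whose reversal converts one orientation into the other. The only cosmetic difference is that you decompose the entire disagreement set into edge-disjoint directed cycles up front (the standard Eulerian decomposition), whereas the paper extracts one such cycle at a time by walking along disagreeing edges and inducts on the number of disagreements.
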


\begin{proof}
If $\varphi=\varphi'$, we are done. Else, pick a vertex $F\in \mathcal{F}$ in $G^\star$ for which the incoming arrows for $G^\star_{\varphi}$ and 
$G^\star_{\varphi'}$ do not agree. Pick an edge $e_1$ between $F$ and some other vertex $F_1$ in $G^\star$ which is incoming 
for $G^\star_\varphi$ but outgoing for $G^\star_{\varphi'}$. Since every vertex in 
$G^\star$ has exactly three incoming edges for each edge-injective function, vertex $F_1$ must have another edge $e_2$ between $F_1$ and another
vertex $F_2$ which is incoming for $G^\star_{\varphi}$ but outgoing for $G^\star_{\varphi'}$. Repeat this process to obtain a sequence 
$F e_1 F_1 e_2 F_2 \cdots$. Since the graph $G^\star$ is finite, there must be some $j<k$ such that $F_j=F_k$. Then 
$F_j e_{j+1} F_j e_{j+2} \cdots e_k F_k$ is a directed cycle in both $G^\star_{\varphi}$ and $G^\star_{\varphi'}$. Reversing the cycle in $G^\star_{\varphi'}$ 
gives an edge-injective function $G^\star_{\varphi''}$ with fewer arrows different from $G^\star_{\varphi}$ than $G^\star_{\varphi'}$. By induction, repeat 
until the two edge-injective functions agree.
\end{proof}

\begin{theorem}
\label{connected}
The graph $G_\Phi$ is connected.
\end{theorem}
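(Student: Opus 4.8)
The plan is to obtain Theorem~\ref{connected} from Lemma~\ref{lemma.directed cycles} by supplying one extra ingredient: that reversing an \emph{arbitrary} directed cycle in $G^\star$ can be simulated by reversing a succession of directed \emph{face} cycles. Lemma~\ref{lemma.directed cycles} already shows that $G^\star_\varphi$ and $G^\star_{\varphi'}$ are joined by a chain of directed-cycle reversals, and every orientation produced along the way is again edge-injective: reversing a simple directed cycle turns each vertex's unique incoming cycle-edge into an outgoing one and vice versa, so the in-degree stays equal to three at every vertex. (Loops are always directed into their face, so they never lie on a nontrivial directed cycle and are never reversed.) Hence it suffices to exhibit, for a single directed cycle $C$, a walk in $G_\Phi$ from the current edge-injective function to the one obtained by reversing $C$.

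First I would fix the planar embedding, orient $C$ counterclockwise, and induct on the number $m$ of bounded faces of $G^\star$ enclosed by $C$. When $m=1$ the cycle $C$ is itself a face cycle and its reversal is a single edge of $G_\Phi$. For $m\ge 2$ the idea is to reverse each of the $m$ enclosed faces exactly once: every edge strictly inside $C$ borders two such faces and is flipped twice (returning to its original direction), while every edge of $C$ borders exactly one and is flipped once, so the cumulative effect is precisely the reversal of $C$. These reversals must be performed in a \emph{valid} order, meaning each face is a directed cycle of the current orientation at the moment it is reversed. I would build the order greedily while maintaining the invariant that the boundary of the union of the not-yet-reversed faces is directed counterclockwise around that union (with the region on its left). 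This holds initially, since that boundary is $C$, and it is preserved whenever we reverse a face whose boundary is currently a counterclockwise directed cycle: along the edges shared with the current boundary the two cycles run oppositely, so after the reversal the new boundary is again counterclockwise, as one checks by a short orientation computation on $\partial R \;\triangle\; \partial f$. Each such single-face step is, by definition, an edge of $G_\Phi$.

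The main obstacle, and the only place the in-degree-three hypothesis is genuinely needed, is the key existence statement underlying the greedy order:
\[
	\textbf{a counterclockwise directed cycle enclosing at least one face always contains a counterclockwise directed face cycle inside it.}
\]
This can fail for general orientations---a directed triangle with a single interior sink has no directed interior face cycle---but such a configuration forces vertices of in-degree different from three and so cannot arise for an edge-injective function. To prove the statement I would take, among all counterclockwise directed cycles contained in $\overline{\mathrm{Int}(C)}$, one bounding the minimal area; if it enclosed two or more faces I would produce a \emph{directed chord}, a directed path through the interior joining two of its vertices, and use it to split off a strictly smaller counterclockwise directed cycle, contradicting minimality. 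Constructing this chord is the delicate point: one traces a directed path inward and must argue it cannot stall at an interior sink, which is exactly what in-degree three at every vertex rules out. This is the same mechanism that produces the distributive-lattice structure on $\alpha$-orientations of plane graphs, and I expect the cleanest write-up to isolate the boxed claim as its own lemma and then read off the connectivity of $G_\Phi$ as an immediate corollary of the two paragraphs above.
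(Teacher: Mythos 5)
Your overall strategy is the same as the paper's: reduce to showing that reversing an arbitrary directed cycle $C$ can be achieved by reversing directed face cycles, and induct on the number of faces enclosed by $C$. The gap is in your proof of the boxed claim, at exactly the point you flag as delicate. You assert that a directed path traced into the interior of the cycle ``cannot stall at an interior sink, which is exactly what in-degree three at every vertex rules out.'' In-degree three does \emph{not} rule out sinks: a dual vertex of degree three with all three edges directed inward is a sink and is perfectly consistent with edge-injectivity. What actually forbids sinks is the standing hypothesis that $G$ has no proper contractible subset of faces (a face with at most three edges is itself contractible, so every dual vertex has degree at least four and hence out-degree at least one). More seriously, even granting that there are no sinks, a directed path heading into the interior need not ever return to the bounding cycle: it can be trapped in a set of interior vertices with no edges directed back out, circulating forever without producing the chord you need. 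Your minimal-area setup does not dispose of this case, since the trapped path yields a directed cycle of no controlled orientation, and nothing in your argument contradicts its existence.

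This trapped case is precisely where the paper's proof does its real work, and it is resolved by the hypothesis your proposal never invokes. The paper considers the set $\mathcal{O}$ of all faces reachable from a boundary vertex $F_o$ by directed paths interior to $C$; if $\mathcal{O}$ never meets $C$, then every edge of the subgraph $G_{\mathcal{O}}$ is assigned by $\varphi$ to a face of $\mathcal{O}$, forcing $e_{\mathcal{O}} = 3f_{\mathcal{O}}$, so $\mathcal{O}$ is a proper contractible subset of faces --- contradicting minimal contractibility. Only then does one obtain a directed path returning to $C$ and hence a strictly smaller directed cycle to which induction applies. To repair your argument you would need to insert this counting step (or an equivalent use of the no-proper-contractible-subset hypothesis) into the chord construction. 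Separately, and more minor, your greedy ordering maintains the invariant that ``the boundary of the union of the not-yet-reversed faces'' is a counterclockwise directed cycle, but after removing a face this region can pinch off or become non-simply-connected, so the invariant needs to be stated for a union of cycles and checked accordingly; the paper sidesteps this by instead splitting $C$ into two directed subcycles, each enclosing fewer faces, and recursing on both.
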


\begin{proof}
By Lemma~\ref{lemma.directed cycles}, any two vertices $G^\star_{\varphi}$ and $G^\star_{\varphi'}$ in $G_\Phi$ are connected by a sequence of 
reversing directed cycles. If we can show that each directed cycle $C$ can be reversed by successively reversing face cycles, we are done.

We induct on the number of enclosed faces by the cycle $C$. If the cycle encloses only one face, it is a face cycle and the claim holds. Otherwise,
the cycle $C$ must contain more than one face and hence must contain at least one edge in $G^\star$. Suppose that there is a vertex $F_o$ on the cycle 
$C$ which is incident to an edge $e_o$ directed from $F_o$ to the interior of $C$. We claim that if there is a directed path $F_o F_1F_2\cdots F_i$ in
$G^\star_\varphi$ with all edges interior to $C$ from $F_o$ to another vertex $F_i$ on $C$, then there is a directed cycle $C'$ enclosing fewer faces 
than $C$.  Indeed, the directed cycle $C$ gives us a directed path $e_1 \cdots e_k$ from $F_i$ to $F_o$ on $C$ going either clockwise or counterclockwise.
Then $F_o F_1F_2\cdots F_i e_1 \cdots e_k$ is a directed cycle $C''$ in $G^\star_\varphi$ that is properly contained in $C$ and hence contains fewer faces.

Denote by $\mathcal{O}$ the set of vertices $F$ in $G^\star_\varphi$ inside or on $C$ that have a directed path of the form $e_oe_1'e_2'\cdots e_k'$ 
from $F_o$ to $F$ with all edges $e_i'$ interior to $C$.

Suppose $\mathcal{O} \cap C = \emptyset$. Then consider the subgraph $G_{\mathcal{O}}$ of $G$ that is the union of all faces in $\mathcal{O}$. 
It is nonempty since there is an edge $e_o$ from $F_o$ to the interior of $C$. Furthermore all of the boundary edges of $G_{\mathcal{O}}$ are associated
with faces in $G_{\mathcal{O}}$ under the edge-injective function $\varphi$ by definition of $\mathcal{O}$. This means that $G_{\mathcal{O}}$ is contractible 
because the number of edges in $G_{\mathcal{O}}$ is three times the number of its faces. This contradicts our assumption of minimum contractible.

Thus $\mathcal{O} \cap C$ is nonempty.  Hence we have at least one directed path from $F_o$ to a vertex $F_i$ on $C$ (possibly equal to $F_o$). 
Thus we obtain a directed cycle interior to $C$ which must contain fewer faces than $C$. Hence induction applies.
\end{proof} 

In the following example, we demonstrate the proof of Theorem~\ref{connected}.

\begin{example}
\label{example.Th4.27} 
Let $G$ be the graph as shown on the left in Figure~\ref{figure1.GPhi}. An edge-injective function $\varphi$ in form of a 
directed graph $G^\star_{\varphi}$ on $G^\star$ is shown on the right of Figure~\ref{figure1.GPhi}. 

Let $C$ be the directed cycle in $G^\star_{\varphi}$ containing faces $\alpha,\beta,\gamma$ and $\delta$ and choose $F_o=F_2$ since there is an edge directed 
from $F_2$ to the interior of $C$. In this case, the set $\mathcal{O}$ consists of the vertices $F_1$ and $F_3$, and $\mathcal{O} \cap C=\{F_3\}$. 
Thus we have a directed cycle $F_2,F_1,F_3,F_4,F_2$ which contains only two faces $\alpha$ and $\delta$. 

If we repeat this process using the directed cycle $C'$ given by $F_2, F_1, F_3, F_4, F_2$ we see that there is only one edge directed to the interior of $C'$. 
Thus $F_o=F_4$.  In this case, the set $\mathcal{O}$ consists of the vertex $F_1$ and so $\mathcal{O} \cap C' = \{F_1\}$.  We obtain the directed cycle $F_4, 
F_1, F_3, F_4$ which contains only the face $\alpha$.  This shows that the vertex in the graph $G_{\Phi}$ that corresponds to $G^\star_\varphi$ is incident 
to an edge corresponding to reversing the boundary of the face $\alpha$. 

\begin{figure}[h]
\begin{tikzpicture}[auto]
\node (G) at (-6,-5){\scalebox{0.65}{
\begin{tikzpicture}[auto]
\draw[thick] (0,0) rectangle (5,5);
\draw[thick] (1.5,1.5) rectangle (3.5,3.5);
\draw[thick] (0,5) to (1.5,3.5);
\draw[thick] (5,5) to (3.5,3.5);
\draw[thick] (0,0) to (1.5,1.5);
\draw[thick] (5,0) to (3.5,1.5);
\node at (2.5,2.5){$F_1$};
\node at (0.8,2.5){$F_4$};
\node at (2.5,0.8){$F_2$};
\node at (4.2,2.5){$F_5$};
\node at (2.5,4.2){$F_3$};
\node at (0,0){$\bullet$};
\node at (0,2.5){$\bullet$};
\node at (1.5,1.5){$\bullet$};
\node at (2.5,0){$\bullet$};
\node at (5,0){$\bullet$};
\node at (0,5){$\bullet$};
\node at (5,2.5){$\bullet$};
\node at (5,5){$\bullet$};
\node at (3.5,3.5){$\bullet$};
\node at (1.5,3.5){$\bullet$};
\node at (3.5,1.5){$\bullet$};
\node at (0,)[anchor=south east]{};
\end{tikzpicture}}
};
\node (D3) at (0,-5){\scalebox{0.8}{
\begin{tikzpicture}[auto]
\node (F1) at (0,0){$F_1$};
\node (F2) at (0,-2){$F_2$};
\node (F3) at (0,2){$F_3$};
\node (F4) at (-2,0){$F_4$};
\node (F5) at (2,0){$F_5$};
\node (alpha) at (-0.7,0.7){\textcolor{red}{$\alpha$}};
\node (beta) at (0.7,0.7){\textcolor{blue}{$\beta$}};
\node (gamma) at (0.7,-0.7){\textcolor{darkgreen}{$\gamma$}};
\node (delta) at (-0.7,-0.7){\textcolor{orange}{$\delta$}};
\path (F2) edge [thick,loop left] (F2);
\path (F2) edge [thick,loop right] (F2);
\path (F3) edge [thick,loop above] (F3);
\path (F4) edge [thick,loop above] (F4);
\path (F4) edge [thick,loop below] (F4);
\path (F5) edge [thick,loop below] (F5);
\path (F5) edge [thick,loop above] (F5);
\draw[edge,thick] (F2) -- (F1);
\draw[edge,thick] (F1) -- (F3);
\draw[edge,thick] (F4) -- (F1);
\draw[edge,thick] (F3) -- (F4);
\draw[edge,thick] (F5) -- (F3);
\draw[edge,thick] (F5) -- (F1);
\draw[edge,thick] (F4) -- (F2);
\draw[edge,thick] (F2) -- (F5);
\node at (0,)[anchor=south east]{};
\end{tikzpicture}}
};
\end{tikzpicture}
\caption{Graph $G$ (left) and an edge-injective function $G^\star_{\varphi}$ on the dual graph $G^\star$ (right).}
\label{figure1.GPhi}
\end{figure}
\end{example}

Let $\alpha_1,\ldots,\alpha_s$ be faces in $G^\star$. We denote by $C_{\alpha_1+\cdots+\alpha_s}$ the cycle containing the faces
$\alpha_1,\ldots,\alpha_s$. Note that $C_{\alpha_1+\cdots+\alpha_s}$ could be a union of disjoint cycles if some of the faces do not share
edges. With this definition, $G^\star_\varphi$ and $G^\star_{\varphi'}$ for any two edge-injective functions $\varphi, \varphi' \in \Phi$ with
$\varphi\neq \varphi'$ can be obtained from each other by reversing a cycle $C$ by Lemma~\ref{lemma.directed cycles}. In particular,
$C=C_{\alpha_1+\cdots+\alpha_s}$, where $\alpha_1,\ldots,\alpha_s$ are the faces enclosed by $C$.

\begin{example}
In Example \ref{example.phi dual}, there are four directed cycles in $G^\star_\varphi$, namely $C_{\alpha}$, $C_{\beta}$, $C_{\gamma}$, 
$C_{\alpha+\gamma}$. Note that we view $C_{\alpha+\gamma}$ as a cycle even though it is the disjoint union of $C_\alpha$ and $C_\gamma$.
\end{example}

This leads immediately to a beautiful enumerative result: the total number of edge-injective functions $|\Phi|$ is one more than the number of directed cycles in an edge-injective function (including cycles that do not bound faces).  Since $|\Phi|$ is determined by the graph, we conclude that  $G^\star_\varphi$ has the same number of directed cycles for all edge-injective functions, as follows.

\begin{lemma}
\label{lemma.counting}
Suppose $\Phi \neq \emptyset$. Then $|\Phi| = m+1$, where $m$ is the number of all directed cycles in $G^\star_\varphi$ for a given $\varphi \in \Phi$.
In particular, the number of directed cycles in $G^\star_\varphi$ is independent of the edge-injective function $\varphi$.
\end{lemma}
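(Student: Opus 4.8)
The plan is to fix one reference edge-injective function $\varphi \in \Phi$ and to exhibit an explicit bijection between $\Phi$ and the set of directed cycles in $G^\star_\varphi$ together with one extra ``empty'' element accounting for $\varphi$ itself. Concretely, I would define a map $\Psi$ sending a directed cycle $C$ in $G^\star_\varphi$ (where, following the convention established just before the lemma, $C = C_{\alpha_1 + \cdots + \alpha_s}$ may be a disjoint union of directed cycles) to the orientation obtained by reversing every arrow of $C$, and sending the empty cycle to $\varphi$ itself. If $\Psi$ is a bijection onto $\Phi$, then counting the domain immediately gives $|\Phi| = m + 1$, where $m$ is the number of directed cycles in $G^\star_\varphi$.

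First I would check that $\Psi$ is well defined, i.e.\ that reversing a directed cycle $C$ keeps us inside $\Phi$. Recall that every vertex of $G^\star_\varphi$ has exactly three incoming arrows. At each vertex $F$ lying on $C$, the cycle (or cycle union) contributes equally many arrows pointing into $F$ as pointing out of $F$, since each constituent directed cycle enters and leaves $F$ the same number of times; that is, $C$ is balanced at every vertex. Reversing all arrows of $C$ therefore turns each such incoming arrow into an outgoing one and vice versa, leaving the in-degree of $F$ unchanged at three, while vertices off $C$ are untouched. Hence the reversed orientation again has three incoming arrows at every vertex and so represents an element of $\Phi$.

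Next I would establish surjectivity and injectivity. For surjectivity, given any $\varphi' \in \Phi$ with $\varphi' \neq \varphi$, the discussion following Theorem~\ref{connected} (built on Lemma~\ref{lemma.directed cycles}) shows that $G^\star_{\varphi'}$ is obtained from $G^\star_\varphi$ by reversing a single directed cycle $C = C_{\alpha_1 + \cdots + \alpha_s}$, namely the set of edges on which $\varphi$ and $\varphi'$ disagree; thus $\varphi' = \Psi(C)$, while $\varphi = \Psi(\emptyset)$. For injectivity, observe that $C$ can be recovered from $\Psi(C)$ as precisely the set of edges on which $\Psi(C)$ and the fixed orientation $\varphi$ disagree; hence distinct cycles yield distinct orientations, and every nonempty $C$ yields $\Psi(C) \neq \varphi$. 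Combining both directions, $\Psi$ is a bijection, so $|\Phi| = m + 1$.

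Finally, since the quantity $|\Phi|$ is intrinsic to the graph $G$ and does not refer to any chosen reference function, the identity $m = |\Phi| - 1$ forces the number $m$ of directed cycles in $G^\star_\varphi$ to be the same for every $\varphi \in \Phi$, which gives the last assertion. I expect the only delicate point to be the bookkeeping around the disjoint-union convention for $C_{\alpha_1 + \cdots + \alpha_s}$: I must make sure that the ``balanced in-degree equals out-degree'' property used in the well-definedness step, and the identification of the symmetric difference with such a $C$ used in surjectivity, both apply verbatim to unions of disjoint directed cycles and not merely to simple cycles. Everything else is a routine counting argument.
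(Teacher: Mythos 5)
Your proposal is correct and follows essentially the same route as the paper: both arguments use Lemma~\ref{lemma.directed cycles} (and the surrounding discussion of the cycles $C_{\alpha_1+\cdots+\alpha_s}$) to match each $\varphi'\neq\varphi$ with the directed cycle on which it disagrees with $\varphi$, and conversely observe that reversing any directed cycle of $G^\star_\varphi$ yields another element of $\Phi$. Your write-up just makes the well-definedness (in-degree preservation) and injectivity (recovering $C$ as the disagreement set) explicit, which the paper leaves implicit.
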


\begin{proof}
As explained above, $G^\star_\varphi$ and $G^\star_{\varphi'}$ for any two edge-injective functions $\varphi,\varphi'\in \Phi$ with $\varphi\neq \varphi'$
can be obtained from each other by reversing a cycle $C_{\alpha_1+\cdots+\alpha_s}$ for some faces $\alpha_1,\ldots,\alpha_s$ in $G^\star$
by Lemma~\ref{lemma.directed cycles}. Conversely, any directed cycle in $G^\star_\varphi$ gives rise to another edge-injective function
$\varphi'$ by reversing it. This proves the claim. 
\end{proof}

\section{The main result} \label{section.main result}

We now prove our main result: that a graph $G$ has a generic edge labeling if it admits at least one edge-injective function (and satisfies some 
standard hypotheses, outlined below).  The next subsection contains a technical graph-theoretic discussion needed for one key point.  We then prove 
our main result in several key steps: produce a three-coloring of graph edges with certain technical conditions; then use this three-coloring to show that 
if a graph has an edge-injective function, the graph also has a generic edge labeling; finally, we compute the rank of $M^\mathsf{ext}$.

\subsection{Combinatorial preliminaries: subgraphs enclosed by cycles}
\label{section.counting}

This subsection consists of graph-theoretic results that we need for Section~\ref{section.existence generic}, where we show that generic labelings exist.  
Here, we analyze some properties of subgraphs enclosed by a cycle. 
We assume in this subsection that $G$ is a finite, planar graph without leaves and without holes. Furthermore, we assume that an edge-injective 
function exists (e.g. but not limited to the minimal contractible case).

Fix a planar embedding of $G$. We will work in the dual setting, namely we analyze a planar embedding of $G^\star$ as defined in 
Section~\ref{section.all edge injective}. We begin with some definitions. 

\begin{itemize}
    \item Let $B$ be a non-self-intersecting $n$-cycle in the fixed planar embedding of $G^\star$.  
    \item Since $B$ is a non-self-intersecting cycle, it defines an interior in the plane by the Jordan curve theorem. Let $G_i^\star$ be the subgraph of $G^\star$ interior to $B$, namely consisting 
of all vertices and edges in the interior or on the boundary of $B$. Denote by $v_i$ (resp. $e_i$) the number of vertices (resp.
edges) in $G^\star_i$.
\item Denote by $m$ the number of edges to 
$B$ in $G^\star_i$ but not on $B$.
\item We assumed that at least one edge-injective function on $G^\star$ exists. Fix a choice of $\varphi\in \Phi$ amongst the edge-injective functions.
\end{itemize}

We will first count the number of edges $e_i$ in $G^\star_i$ by counting the number of edges into each vertex. Note that every vertex in 
$G^\star$ has exactly three incoming edges since $\varphi$ is an edge-injective function. Hence all vertices in the interior of $G^\star_i$ must have
exactly three incoming edges. Thus the number of edges to an interior vertex of $G^\star_i$ is $3(v_i-n)$. Combining this with the number of edges
$m+n$ to $B$, we have the counting formula
\begin{equation}
\label{equation.edges}
	e_i = 3(v_i-n) + m + n.
\end{equation}

Let $f_i$ be the number of faces in $G^\star_i$. Euler's formula for planar graphs asserts that $f_i-e_i+v_i=1$. Combining this 
with~\eqref{equation.edges}, we obtain
\begin{equation}
\label{equation.faces}
	f_i = e_i-v_i +1 = 2(v_i-n) + m+1.
\end{equation}

Let $k$ be the smallest positive integer such that a $k$-gon occurs in $G^\star_i$ as a face.
Every face in $G^\star_i$ has at least $k$ edges and every edge not on the boundary $B$ is on two faces. Hence 
\begin{equation}
\label{equation.bound faces}
	k f_i \leqslant 2e_i-n.
\end{equation}
Combining~\eqref{equation.edges}, \eqref{equation.faces}, \eqref{equation.bound faces}, we obtain
\[
	2(k-3)(v_i-n) + (k-2)m + k-n \leqslant 0.
\]
Note that if $G^\star_i$ does not contain any loops or multi-edges, we have $k \geqslant 3$.

\begin{corollary} 
\label{corollary: cycle bound}
Let $G_i^\star$ be a subgraph of $G^\star$ without loops or multi-edges and $n$ vertices on its boundary. Then
\[
	0 \leqslant m \leqslant n-3.
\]
\end{corollary}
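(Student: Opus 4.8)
The plan is to read off both inequalities directly from the estimate
\[
	2(k-3)(v_i-n) + (k-2)m + k-n \leqslant 0
\]
established immediately before the statement, treating everything as a purely algebraic consequence once the hypotheses are converted into sign conditions. The lower bound $0\leqslant m$ needs no argument at all: by definition $m$ counts a set of edges (those incident to $B$ that lie in $G_i^\star$ but not on $B$), so it is a nonnegative integer.

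For the upper bound I would first record two sign facts. Since $G_i^\star$ has no loops or multi-edges, the remark preceding the corollary gives $k\geqslant 3$, so that $k-2\geqslant 1$ and $k-3\geqslant 0$. Moreover the $n$ vertices of the boundary cycle $B$ all belong to $G_i^\star$, so $v_i\geqslant n$ and hence $v_i-n\geqslant 0$. Combining these, the leading term $2(k-3)(v_i-n)$ is nonnegative, and may therefore be dropped from the displayed estimate to yield
\[
	(k-2)m + k - n \leqslant 0, \qquad\text{equivalently}\qquad (k-2)m \leqslant n-k.
\]

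I would then finish with a short chain of inequalities. Because $m\geqslant 0$ and $k-2\geqslant 1$ we have $m\leqslant (k-2)m$, and because $k\geqslant 3$ we have $n-k\leqslant n-3$; stringing these together gives
\[
	m \leqslant (k-2)m \leqslant n-k \leqslant n-3,
\]
which is the desired bound.

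As for the main obstacle, essentially none remains at this stage: all of the genuine combinatorics has already been absorbed into the displayed estimate, namely the edge count \eqref{equation.edges}, the application of Euler's formula \eqref{equation.faces}, and the girth bound \eqref{equation.bound faces}. The only point requiring a moment's care is justifying that the discarded term is nonnegative, which rests on the two sign facts above. In particular, the hypothesis of no loops or multi-edges (equivalently $k\geqslant 3$) is used twice: once to drop the leading term, and once to pass from $n-k$ to $n-3$ at the very end.
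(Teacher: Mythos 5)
Your proposal is correct and follows essentially the same route as the paper: the corollary is read off directly from the inequality $2(k-3)(v_i-n)+(k-2)m+k-n\leqslant 0$ together with the observation that the no-loops/no-multi-edges hypothesis forces $k\geqslant 3$. The paper leaves the final algebraic step implicit, and your chain $m\leqslant (k-2)m\leqslant n-k\leqslant n-3$ is exactly the intended justification.
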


\begin{corollary}
\label{corollary.double bound}
Let $G^\star$ be a finite, planar graph without multi-edges. Fix a planar embedding of $G^\star$. Suppose that there is an edge-injective function
$G^\star_\varphi$ on $G^\star$.

If $G^\star$ contains three vertices $w, v, u$ that form a triangle whose interior contains no loops, then $G^\star_\varphi$ has no 
directed paths from any one of $w, v, u$ to any other through the interior of the triangle.  

In particular, if $G^\star$ contains the subgraph below and $G^\star$ has no loops interior to the cycles $wvu_1w$ or $wvuw$, then only $u$ can lie on a 
directed path from $v$ to $w$ in $G^\star_\varphi$.
\[\begin{picture}(100,60)(0,-30)
\put(10,30){\circle*{5}}
\put(0,30){$w$}

\put(10,-30){\circle*{5}}
\put(0,-30){$v$}

\put(40,0){\circle*{5}}
\put(25,0){$u_1$}

\put(70,0){\circle*{5}}
\put(77,0){$u$}

\put(10,30){\line(0,-1){60}}
\put(10,30){\line(1,-1){30}}
\put(10,30){\line(2,-1){60}}

\put(10,-30){\line(1,1){30}}
\put(10,-30){\line(2,1){60}}
\end{picture}\]
\end{corollary}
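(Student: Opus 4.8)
The plan is to reduce both assertions to Corollary~\ref{corollary: cycle bound} applied to a boundary $3$-cycle, where it forces the quantity $m$ from Section~\ref{section.counting} to vanish. Recall that for a non-self-intersecting cycle $B$ bounding a subgraph $G_i^\star$, the integer $m$ counts the edges of $G_i^\star$ that are directed into a vertex of $B$ but do not themselves lie on $B$; since $\varphi$ is edge-injective, every interior vertex already absorbs all three of its incoming arrows inside the region, so $m$ records exactly the arrows entering the boundary cycle from its interior. The first observation is therefore that whenever $B$ is a triangle (so $n=3$) whose interior contains no loops—and no multi-edges, which holds throughout since $G^\star$ has none—Corollary~\ref{corollary: cycle bound} gives $0 \le m \le n-3 = 0$, hence $m=0$: not a single arrow points from the interior of the triangle into any of its three corners.

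For the first statement I would argue that $m=0$ already forbids a directed path between two corners that passes through the interior. Such a path $v \to \cdots \to w$ has all intermediate vertices interior to the triangle, so its final edge is directed from an interior vertex into the corner $w$, which is precisely an arrow counted by $m$, contradicting $m=0$. The same reasoning applies to every ordered pair among $w,v,u$, since $m=0$ is symmetric in the three corners. The one point to check is that a directed path cannot escape the triangle and re-enter: in the fixed planar embedding no edge crosses $B$, so a path can leave the bounded region only through one of $w,v,u$, and reaching any corner from the interior again requires an arrow counted by $m$. Thus the interior behaves as a trap with respect to the three corners.

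For the ``in particular'' statement I would apply this trapping argument to the \emph{outer} triangle $wvu$, whose interior contains $u_1$ together with the chords $wu_1$ and $u_1v$ (and possibly more structure, since $u_1$ must in fact carry three incoming arrows). By hypothesis there are no loops interior to $wvu_1w$ or to $wvuw$, hence none in the interior of $wvu$, so Corollary~\ref{corollary: cycle bound} again yields $m=0$ for this triangle. Now suppose some directed path from $v$ to $w$ passed through $u_1$. The portion of the path after $u_1$ starts at the interior vertex $u_1$ and must eventually reach the corner $w$; by the trapping argument it can never reach any of $w,v,u$ from inside, a contradiction. Hence $u_1$ lies on no directed path from $v$ to $w$, whereas $u$, being a corner of the triangle, is not excluded (it may be traversed along the boundary edges $v\to u\to w$). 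This is exactly the claim that only $u$ can lie on such a path.

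The step I expect to be most delicate is pinning down the correct reading of $m$ and making the trapping argument airtight: one must confirm that $m$ counts precisely the arrows entering a corner from the interior (an arrow directed \emph{out of} a corner into the interior has its head at an interior vertex and is not counted), and that planarity genuinely prevents a directed path from leaving and re-entering the triangle except through a corner. Once $m=0$ is interpreted as ``no interior arrow enters a corner,'' both parts follow quickly; the reason the two loop-freeness hypotheses are both invoked is simply that together they guarantee the entire interior of $wvu$ is loop-free, which is what lets us apply Corollary~\ref{corollary: cycle bound} to the outer triangle.
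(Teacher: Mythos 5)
Your proposal is correct and follows essentially the same route as the paper: apply Corollary~\ref{corollary: cycle bound} to the boundary triangle ($n=3$, no interior loops or multi-edges) to force $m=0$, interpret this as the absence of any arrow from the interior into a corner, and conclude that the interior traps directed paths; the ``in particular'' claim then follows by viewing $u_1$ as an interior vertex of the outer triangle $wvuw$. Your added care about the exact meaning of $m$ and about paths not re-entering through an edge crossing $B$ only makes explicit what the paper's shorter argument leaves implicit.
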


\begin{proof}
Suppose that $B = wvuw$ is an (undirected) cycle in $G^\star$ and let $G^\star_i$ denote the graph consisting of all vertices and edges on $B$ or 
on the interior of $B$ in the given planar embedding of $G^\star$. The graph $G^\star_i$ has no loops or multi-edges by hypothesis and its boundary 
cycle $B$ has $n=3$ edges.  Corollary~\ref{corollary: cycle bound} implies that $G^\star_i$ has no edges into $B$ except for the edges on $B$. In other words, 
any edges between any of $u, v, w$ through a point on the interior of $B$ must be directed into the interior.  In particular, there cannot be a directed path 
from any of the vertices $u, v, w$ into another through the interior of $G^\star_i$.

Now assume that there is another vertex $u_1$ adjacent to both $v$ and $w$, without loss of generality in the configuration shown above. By the 
previous argument, we conclude the edges interior to $B$ must be directed $w \mapsto u_1$ and $v \mapsto u_1$, and that there is no directed 
path $u_1$ to any of $u, v, w$.  This proves the claim.
\end{proof}

We remark that the edge-injective function $\varphi$ guarantees that the vertex $u_1$ in $G^\star_\varphi$ must have three edges directed into it. Since $u_1$ is 
on the interior of the 3-cycle $B = wvuw$, there must be another edge $u' \mapsto u_1$ for a vertex $u'$ that is either interior to $wu_1vuw$ or 
on the boundary.  But multi-edges are not allowed so if $u'$ is not on the interior then $u'=u$.

\subsection{If an edge-injective function exists then a generic edge labeling exists}
\label{section.existence generic}

We now prove that a graph $G$ has a generic edge labeling if it admits at least one edge-injective function and is 
\begin{itemize}
    \item a finite, directed, planar graph without leaves (our standard condition),
    \item without holes (a condition added in the previous subsection),
    \item and each pair of faces in $G$ share at most one edge.   In the dual graph $G^\star$, the condition that
two faces share at most one edge is equivalent to the condition that $G^\star$ has no multi-edges.
\end{itemize}

{\bf We also assume} that $e_G \geqslant 3f_G$ and in fact, by ignoring edges if necessary, that $e_G = 3f_G$.  This may seem a restrictive assumption but Corollary~\ref{corollary.rank decomp} allows us to repeatedly decompose and contract, reducing the case either to a graph with $e_G \geqslant 3f_G$ and no contractible subset of faces or to the minimal contractible case.  Moreover, in the minimal contractible case, Lemmas~\ref{lemma.minimal contractible} and~\ref{lemma.min cont cond} guarantee that either $e_G=3f_G$ or $G$ is one of three possible graphs with a single face. So this final assumption is non-restrictive.

The itemized list of conditions together with $e_G \geqslant 3f_G$ are the assumptions of this section.

The structure of this section is as follows.  Proposition~\ref{proposition.generic} proves we can three-color in a particular way the edges of a graph satisfying the assumptions of this section.  The proof consists of straightforward but technical graph theory.  Theorem~\ref{theorem.generic} uses this result to show that if a graph satisfies the assumptions of this section and has at least one edge-injective function then it has a generic edge labeling.  Finally, we use Theorem~\ref{theorem.generic} to compute the rank of $M^\mathsf{ext}$ in both the minimal contractible case and the case when $G$ has no proper contractible subset of faces.

To begin, recall from Proposition~\ref{proposition.factoring det into edge-injective} that for $e_G = 3f_G$
\begin{equation}
\label{equation.det phi}
	\det M^{\mathsf{ext}} = \sum_{\substack{\varphi \colon \mathcal{F} \to \mathcal{E}_3\\ \text{edge-injective}}}
	\prod_{F \in \mathcal{F}} \det M^{\mathsf{ext}}_{F,\varphi(F)},
\end{equation}
where $M^{\mathsf{ext}}_{F,\varphi(F)}$ is the submatrix of $M^{\mathsf{ext}}$ with the 3 rows corresponding to face $F$ and columns indexed by
the edges in $\varphi(F)$.

\begin{proposition}
\label{proposition.generic}
Let $G$ be a graph with the assumptions of this section. Suppose that there is an edge-injective function $\varphi$ on $G$.
Then there exists a coloring of the edges of $G^\star_\varphi$ with colors $0,1,2$ such that:
\begin{enumerate}
\item for each vertex $F$ in $G^\star$ the three incoming edges into $F$ for $G^\star_\varphi$ have different colors $0,1,2$;
\item there are no monochromatic directed cycles (meaning, there are no directed cycles $C$ in $G^\star_\varphi$ for which the edges all have the same 
colors).
\end{enumerate}
\end{proposition}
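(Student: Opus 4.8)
The plan is to translate conditions (1) and (2) into a decomposition statement about the fixed directed graph $D := G^\star_\varphi$ and then invoke a classical sparsity/branching result. First I would record the basic bookkeeping: by construction every vertex of $D$ has in-degree exactly three, a loop (coming from a boundary edge of $G$) counting once toward the in-degree. Condition (1) says precisely that the three in-edges at each vertex receive the three distinct colors, i.e. each color class has in-degree exactly one at every vertex. Since a loop is an unavoidable length-one directed cycle that cannot be reassigned, I read ``directed cycle'' in (2) as a closed directed walk through at least two distinct faces; with that reading, condition (2) says exactly that the non-loop part of each color class contains no directed cycle. A subgraph with in-degree at most one and no directed cycle is a \emph{branching} (an in-directed forest). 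Thus the proposition is equivalent to the purely combinatorial statement that \emph{the non-loop edges of $D$ partition into three branchings}. Once such a partition $B_0 \sqcup B_1 \sqcup B_2$ is in hand, I would color the edges of $B_c$ with color $c$; at each vertex the non-loop in-edges then automatically get distinct colors (at most one per branching), and the number of unused colors equals the number of loops at that vertex, so the loops can be assigned bijectively to the remaining colors, giving a rainbow in-star and finishing both (1) and (2).

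To produce the three branchings I would appeal to Frank's covering-by-branchings theorem: the arcs of a digraph partition into $k$ branchings if and only if (a) every vertex has in-degree at most $k$, and (b) every nonempty vertex set $X$ induces at most $k(|X|-1)$ arcs. Here $k=3$. Condition (a) is immediate, since non-loop in-degrees are at most three. For (b) I would use planarity: by the standing hypothesis that two faces of $G$ share at most one edge, the underlying graph of the non-loop edges of $G^\star$ is simple, and it is planar; hence any induced subgraph on $|X| \geqslant 2$ vertices has at most $3|X|-6 \leqslant 3(|X|-1)$ edges, while a single vertex induces none. This is exactly the arboricity-type sparsity that Section~\ref{section.counting} develops by hand: the inequality $m \leqslant n-3$ of Corollary~\ref{corollary: cycle bound} is the same bound localized to regions enclosed by a cycle, so one may substitute that self-contained estimate for the generic planar bound if one prefers to avoid citing an external theorem.

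I expect the main obstacle to be twofold. First, the loop/boundary-edge bookkeeping must be handled carefully: one has to check that the number of loops at each vertex equals the number of colors not already consumed by non-loop in-edges, which relies on the in-degree being exactly three throughout, and one has to confirm that loops genuinely do not produce the forbidden (length $\geqslant 2$) monochromatic cycles. Second, and more seriously, is verifying the sparsity condition (b) uniformly over \emph{all} vertex subsets rather than only over the cycle-enclosed regions that Corollary~\ref{corollary: cycle bound} treats; the clean resolution is that an arbitrary induced subgraph is itself simple and planar, so the standard planar edge bound applies directly, but matching this to the paper's region-by-region estimates takes a short argument.

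Should one wish to avoid Frank's theorem entirely and stay within the paper's toolkit, I would instead argue by induction on the number of faces of the planar embedding of $D$, peeling off an innermost cycle. Corollary~\ref{corollary: cycle bound} bounds how many edges can enter such a cycle from inside and Corollary~\ref{corollary.double bound} forbids directed paths crossing a triangular region, which together force the interior to be colorable independently of the exterior; the delicate point in this route is to show that a local color switch made to break an innermost monochromatic cycle cannot create a new monochromatic cycle except strictly inside it, so that minimality of the enclosed area drives the induction to termination.
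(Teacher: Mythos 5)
Your proof is correct, but it takes a genuinely different route from the paper's. The paper proves Proposition~\ref{proposition.generic} by induction on the number of vertices of $G^\star$: it first reduces to the case where no $3$-cycle of $G^\star$ has a nonempty interior (using Corollary~\ref{corollary: cycle bound} and Corollary~\ref{corollary.double bound}), isolates the ``kite'' configuration, runs a pigeonhole count of in-degrees along the boundary to locate a boundary vertex with two loops, contracts the unique non-loop edge into that vertex, colors the smaller graph inductively, and then checks by hand that the three edges into the special vertex can be colored without closing a monochromatic cycle. You instead observe that conditions (1) and (2) --- with the same reading of ``directed cycle'' that the paper implicitly uses, since its own base case colors three loops and declares the result free of monochromatic cycles, so single loops do not count --- say exactly that the non-loop arcs of $G^\star_\varphi$ partition into three branchings, and you obtain that partition from Frank's covering-by-branchings theorem, whose hypotheses follow from the in-degree-$3$ property and the $3|X|-6$ edge bound for simple planar graphs (this is where the standing assumption that two faces share at most one edge enters). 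Your loop bookkeeping is right: the number of loops at a vertex equals the number of colors not consumed by its non-loop in-arcs, and a digraph with in-degree at most one is acyclic exactly when it is a branching. This route is shorter, identifies the statement as an instance of a standard arboricity-type decomposition, and sidesteps both the paper's delicate case analysis and its ``no holes'' hypothesis; the cost is importing an external theorem where the paper's lengthier induction is self-contained and stays within the planar-geometry toolkit of Section~\ref{section.counting}. Two small points: the bound $3|X|-6$ is only valid for $|X|\geqslant 3$ (for $|X|=2$ use simplicity directly, which gives at most $1\leqslant 3(|X|-1)$ induced arcs), and your sketched inductive fallback is not developed enough to stand as a second proof --- but the Frank-based argument works on its own.
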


\begin{proof}
We will show that we can assign labels in $\{0,1,2\}$ according to Conditions (1) and (2) by induction on the number of vertices in $G^\star$.
If $G^\star$ has only one vertex, then $G^\star_\varphi$ must be three loops at this vertex, which we can label $0,1,2$. This configuration
has no monochromatic directed cycles. If $G^\star$ has two vertices, $G_\varphi^\star$ is either disconnected and consists of two vertices with three loops
attached or there is one edge between the two vertices, one vertex has two loops and the other has three loops. It is easy to check that an
assignment with Condition (1) exists and since there are no directed cycles Condition (2) also holds.

\smallskip

We draw $G^\star$ using the same planar embedding as $G$, so that each vertex $v \in G^\star$ is in the interior of the corresponding face $F_v$ of 
$G$ and each edge $uv$ in $G^\star$ intersects the boundary $\overline{F_v} \cap \overline{F_u}$ in exactly one point.  Any closed curve $\gamma$ 
in $G$ has an \textit{exterior} defined as the connected component of  $\mathbb{R}^2 - \gamma$ containing the unbounded face.  (The boundary of 
$G$ is a simple closed curve so there is only one region exterior to the boundary of $G$.) Every edge in $G^\star$ is an embedded line segment in 
the plane so every cycle $C$ in $G^\star$ defines a closed curve in the plane.  We define the \textit{exterior} $C_o$ of $C$ as the subgraph with 
vertices and edges in $G^\star$ that are all exterior to $C$ or on $C$. The graph $G^\star$ is planar so if $u, v$ are two vertices in $C_o$ with an 
edge between them in $G^\star$, then in fact the edge is exterior to $C$.  In other words, the exterior $C_o$ is an induced subgraph of $G^\star$. 
Similarly, the \textit{interior} $C_i$ of $C$ consists of the vertices of $G^\star$ not in $C_o \cup C$ and the edges between those vertices in $G^\star$.   
The \textit{boundary} of $C$ is the intersection $C_o \cap C_i$, which equals $C$. 
    
\smallskip

First we reduce to the case when no (undirected) 3-cycle in $G^\star$ has any vertices or edges in the interior.  Indeed, assume there is a vertex $u$ 
with $wvuw$ an (undirected) 3-cycle in $G^\star$ and assume there is at least one vertex or edge on its interior.  By the definition of interior above, 
there are no loops on the interior of the cycle $wvuw$. Thus Corollary~\ref{corollary: cycle bound} shows that all 
edges between any of $w, v, u$ and the interior of the 3-cycle are pointing in.  Let $\widetilde{G}^\star$ be the graph with the interior of $wvuw$ removed. This 
removes at least one vertex, since any edge in the interior whose endpoints are both on the boundary would be a multi-edge, which by hypothesis 
cannot happen.  By induction, we may assign colors to the edges of the induced edge-injective function $H^\star_\varphi$ without any 
monochrome directed cycles.  Now consider the subgraph $\widetilde{H}^\star$ consisting of the interior of $wvuw$ together with a loop at 
$u'$ for any vertex $u'$ that has an edge directed into it from the boundary of the triangle in $G_\varphi^\star$.  We know that $\widetilde{H}^\star$ 
has fewer vertices than $G^\star$ since it does not have $w, v, u$ so we can also inductively assign colors to the edges in the induced edge-injective 
function $\widetilde{H}^\star_\varphi$ without monochrome directed cycles. Gluing the colorings for $H^\star_\varphi$ and 
$\widetilde{H}^\star_\varphi$ together produces a coloring of $G_\varphi^\star$ with no monochrome directed cycle.  Indeed, if there were a monochrome 
directed cycle, then the cycle must contain a point both from the interior of a triangle and from the boundary or exterior of a triangle.  But we showed that 
there is no path from the interior of a triangle out, which is a contradiction. 

\smallskip

\begin{figure}
\[
\begin{picture}(100,60)(-50,-30)
\put(10,30){\circle*{5}}
\put(0,35){$w$}

\put(10,-30){\circle*{5}}
\put(0,-35){$v$}

\put(40,0){\circle*{5}}
\put(45,0){$u_2$}

\put(-20,0){\circle*{5}}
\put(-35,0){$u_1$}

\put(10,30){\line(0,-1){60}}
\put(10,30){\line(1,-1){30}}
\put(-20,0){\line(1,1){30}}

\put(10,-30){\line(1,1){30}}
\put(10,-30){\line(-1,1){30}}
\end{picture}
\]
\caption{Kite configuration referred to in the proof of Proposition~\ref{proposition.generic}.
\label{figure.kite}}
\end{figure}  

Thus we may assume that every set of vertices $u, v, w$ in $G^\star$ with all possible edges between them has no vertices on its interior.  It further 
follows that there are no sets of vertices $w, v, u, u_1$ in the configuration shown in Corollary~\ref{corollary.double bound}. This means that if $v$ 
and $w$ have an edge between them in $G^\star$, then there are at most two vertices $u_1, u_2$ with the property that $u_i$ is adjacent to both 
$v$ and $w$ since neither $u_i$ can be in the interior of the 3-cycle formed by the other and $v, w$.  Moreover, if there are two vertices $u_1, u_2$ 
with this property then the 4-cycle $wu_1vu_2w$ contains the edge between $w$ and $v$ on its interior.  We call this the kite configuration, see
Figure~\ref{figure.kite}.

We claim that at least one of the vertices on the boundary of $G^\star$ has two loops. Indeed, let $B$ be the boundary of $G^\star$.  By the above arguments
and the choice of our planar embedding, the interior of $G^\star$ contains no loops.  Suppose that the boundary $B$ has $n$ edges.  The existence of the 
edge-injective function $\varphi$ demonstrates that the total in-degree for vertices on $B$ is $3n$. 
In a directed graph, the in-degree of a vertex is the number of edges that have this vertex as their endpoint.
Edges from the interior contribute at most $n-3$ of this 
total in-degree by Corollary~\ref{corollary: cycle bound}. The edges on $B$ contribute exactly $n$ of the in-degree.  Thus loops at boundary vertices account 
for the remaining incoming edges and there are at least 
\[
	3n - n - (n-3) = n+3.
\]
By the pigeonhole principle, this means at least one boundary vertex has two loops.  

Let $v$ be one of the boundary vertices of $G^\star$ with two loops. Let $w \mapsto v$ denote the unique vertex mapping to $v$ in $G^\star_\varphi$.
Consider the directed graph $\widetilde{G}^\star_\varphi$ obtained from $G^\star_\varphi$ by identifying vertices $w$ and $v$, contracting the edge 
$w \mapsto v$, turning into a loop any edge $v \mapsto u$ for which the vertex $u$ is adjacent to $w$, and omitting any loops at $v$.  More precisely, 
define $\widetilde{G}^\star_\varphi$ to have vertex set
\[
	\widetilde{V} = V(G^\star_\varphi) \backslash \{v\}
\]
and directed edge set 
\[
	\begin{array}{ll}
    	\widetilde{E} & = \{u \mapsto u': \textup{ if } u\mapsto u' \in E(G^\star_\varphi) \textup{ and } u, u' \neq v\} \\ 
	&\bigcup \{w \mapsto u: v \mapsto u \in E(G^\star_\varphi) \textup{ but neither } u \mapsto w \in E(G^\star_\varphi) \textup{ nor } w \mapsto u \in 
	E(G^\star_\varphi) \} \\ 
	&\bigcup \{u \mapsto u: \textup{ if } v \mapsto u \in E(G^\star_\varphi) \textup{ and either } w \mapsto u \in E(G^\star_\varphi) \textup{ or } u 
	\mapsto w \in E(G^\star_\varphi)\}. \end{array}
\]
Every vertex in $\widetilde{G}^\star_\varphi$ has exactly three incoming edges since we omitted all edges of $G^\star_\varphi$ into $v$ and created 
exactly one edge in $\widetilde{G}^\star_\varphi$ for all edges of $G^\star_\varphi$ into any vertex $u \neq v$.  

In addition, note that $\widetilde{G}^\star_\varphi$ has no multi-edges. Indeed, to create $\widetilde{E}$ we erased all edges into $v$, replaced some 
edges $v \mapsto u$ with edges $w \mapsto u$, and replaced the rest of the edges $v \mapsto u$ with loops $u \mapsto u$.  So suppose there are two 
non-loop edges between $u \neq u'$ in $G^\star_\varphi$. One of $u, u'$ must be $w$ else both edges are in $E(G^\star_\varphi)$, contradicting our 
assumption that $G^\star_\varphi$ has no multi-edges.  Moreover, one of the edges between $u$ and $w$ in $\widetilde{E}$ must correspond to an edge 
$v \mapsto u$ in $G^\star_\varphi$ else $G^\star_\varphi$ again has multi-edges.  But by the third part of the definition of $\widetilde{E}$ this means that
the edges incident to $u$ in $\widetilde{G}^\star_\varphi$ differ from those incident to $u$ in $G^\star_\varphi$ by exchanging $v \mapsto u$ for a loop 
$u \mapsto u$, which cannot create a multi-edge.

Since $\widetilde{G}^\star_\varphi$ has one fewer vertex than $G^\star_\varphi$ we conclude by induction that we can color its edges satisfying 
Conditions (1) and (2). If $u \mapsto u'$ is an edge in $G^\star_\varphi$ with $u' \neq v$, then assign it 
\begin{itemize}
    \item the same color as in $\widetilde{G}^\star_\varphi$ if $u \neq v$, 
    \item the same color as $w \mapsto u'$ if $u=v$ and there is no edge between $u'$ and $w$ in $G^\star_\varphi$, and
    \item the same color as $u' \mapsto u'$ if $u = v$ and there is an edge between $u'$ and $w$ in $G^\star_\varphi$.
\end{itemize}
This assigns a coloring to each edge in $G^\star_\varphi$ except those into $v$.  

Every directed cycle in $G^\star_\varphi$ either goes through $v$ or induces a directed cycle in $\widetilde{G}^\star_\varphi$. By induction, the graph 
$\widetilde{G}^\star_\varphi$ has no monochrome directed cycles.  So if we can choose colors for the three edges coming into $v$ that guarantees no 
directed cycle through $v$ is monochrome, then we have produced an edge-coloring of $G^\star_\varphi$ that satisfies Conditions (1) and (2).

In addition, assigning a color to the edge $w \mapsto v$ can only create a monochrome directed cycle in $G^\star_\varphi$ if there is a monochrome 
directed path from $v$ to $w$ in $\widetilde{G}^\star_\varphi$. Every directed path from $v$ to $w$ of length at least three in $G^\star_\varphi$ became 
a directed cycle through $w$ in $\widetilde{G}^\star_\varphi$ unless it began $v \mapsto u$ for some vertex $u$ that is adjacent to $w$.  By induction, 
the graph $\widetilde{G}^\star_\varphi$ has no monochrome directed cycles.  It follows that the only monochromatic paths from $v$ to $w$ in our 
edge-coloring of $G^\star_\varphi$ start with an edge $v \mapsto u$ for a vertex $u$ adjacent to $w$. 

We further showed that we can assume there are at most two vertices adjacent to both $w$ and $v$.  So every directed path between $v$ and $w$ in 
$G^\star_\varphi$ starts either with the edge $v \mapsto u_1$ or $v \mapsto u_2$ where $wu_1vu_2w$ forms a 4-cycle whose only interior is the edge 
$w \mapsto v$. In both cases, every directed monochrome path $v$ to $w$ starts either $v \mapsto u$ or $v \mapsto u'$.  Thus if we assign the edge 
$w \mapsto v$ a color different from those assigned to $v \mapsto u$ and $v \mapsto u'$, we have ensured there are no monochrome directed cycles.  
The two loops at $v$ can now be assigned the other two colors arbitrarily.
\end{proof}

\begin{example}
The edges of the following directed graph $G_\varphi^\star$ cannot be colored with three colors such that each vertex has three incoming edges of
different colors and such that there are no monochromatic cycles.
\begin{center}
\scalebox{0.9}{
\begin{tikzpicture}[auto]
\node (F1) at (0,0){$\bullet$};
\node (F2) at (0,-2){$\bullet$};
\node (F3) at (0,2){$\bullet$};
\node (F4) at (-2,0){$\bullet$};
\node (F5) at (2,0){$\bullet$};
\draw [edge, thick] (F1) edge [bend left=30] (F4);
\draw [edge, thick] (F1) edge [bend left=50] (F4);
\draw [edge, thick] (F1) edge [bend right=30] (F5);
\draw [edge, thick] (F1) edge [bend right=50] (F5);
\draw [edge, thick] (F1) edge [bend right=30] (F2);
\draw [edge, thick] (F1) edge [bend left=30] (F2);
\draw [->,edge, thick] (F3) to[out=0,in=90] (2.5,0) to[out=-90,in=0] (F2);
\path (F3) edge [thick,loop left] (F3);
\path (F3) edge [thick,loop above] (F3);
\draw[edge,thick] (F2) -- (F1);
\draw[edge,thick] (F1) -- (F3);
\draw[edge,thick] (F4) -- (F1);
\draw[edge,thick] (F3) -- (F4);
\draw[edge,thick] (F3) -- (F5);
\draw[edge,thick] (F5) -- (F1);
\node at (0,)[anchor=south east]{};
\end{tikzpicture}}
\end{center}
This graph has multiple edges between vertices and hence does not satisfy the condition of Proposition~\ref{proposition.generic} that no two faces
in the underlying graph $G$ share more than one edge. This shows that this assumption in Proposition~\ref{proposition.generic} is needed.
\end{example}

This brings us to the main theorem of this section.

\begin{theorem}
\label{theorem.generic}
Let $G$ be a graph with the assumptions of this section. Suppose that there is an edge-injective function $\varphi$ on $G$.
Then there exists a generic edge labeling.
\end{theorem}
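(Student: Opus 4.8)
The plan is to exhibit an explicit specialization of the $a_i$'s at which $\det M^{\mathsf{ext}}$ does not vanish; since the section assumes $e_G=3f_G$, the matrix $M^{\mathsf{ext}}$ is square, the quantity $d=\min\{e_G,3f_G\}$ in Definition~\ref{definition.generic edge labels} equals $3f_G$, and $N=M^{\mathsf{ext}}$, so showing $\det M^{\mathsf{ext}}\not\equiv 0$ as a polynomial in the $a_i$ is exactly what genericity requires. The starting point is the factorization of Proposition~\ref{proposition.factoring det into edge-injective},
\[
	\det M^{\mathsf{ext}} = \sum_{\substack{\varphi' \colon \mathcal{F} \to \mathcal{E}_3\\ \text{edge-injective}}} \prod_{F \in \mathcal{F}} \det M^{\mathsf{ext}}_{F,\varphi'(F)},
\]
together with the fact (see~\eqref{equation.Vandermonde}) that each inner factor is a Vandermonde determinant $\pm(a_i-a_j)(a_j-a_k)(a_k-a_i)$ on the three edges $\varphi'(F)=\{i,j,k\}$. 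First I would invoke Proposition~\ref{proposition.generic} to fix a coloring $c$ of the edges of $G^\star_\varphi$ by $\{0,1,2\}$ satisfying its two conditions, and then specialize the labels by setting $a_e = t_{c(e)}$ for three distinct indeterminates $t_0,t_1,t_2$. Under this specialization the factor attached to a face $F$ is nonzero exactly when the three edges of $\varphi'(F)$ receive three distinct colors, so a whole summand survives if and only if $\varphi'$ is \emph{rainbow}, meaning every face sees all three colors on its assigned edges.

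The heart of the argument is to show that $\varphi$ is the \emph{only} rainbow edge-injective function for the fixed coloring $c$. Here I would use the cycle-reversal description of $\Phi$: by Lemma~\ref{lemma.directed cycles} any $\varphi'\neq\varphi$ is obtained from $\varphi$ by reversing a directed cycle $C=C_{\alpha_1+\cdots+\alpha_s}$, a union of genuine directed cycles of length at least two (loops reverse to themselves and are irrelevant, and no $2$-cycles occur because $G^\star$ has no multi-edges). On a single reversed directed cycle $F_0\to F_1\to\cdots\to F_{m-1}\to F_0$ with cycle-edges $e_j\colon F_{j-1}\to F_j$, reversal replaces at each $F_j$ the incoming cycle-edge $e_j$ by the edge $e_{j+1}$ while leaving the two off-cycle incoming edges of $F_j$ unchanged. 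Condition (1) for $\varphi$ forces those two off-cycle edges to carry exactly the two colors other than $c(e_j)$, so rainbowness survives at $F_j$ precisely when $c(e_{j+1})=c(e_j)$. Imposing this at every vertex forces the cycle to be monochromatic, which is forbidden by condition (2). Applying this to each directed cycle component of $C$ shows no $\varphi'\neq\varphi$ can be rainbow, which is the uniqueness I want.

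Combining the two observations, every summand except the one indexed by $\varphi$ vanishes under $a_e=t_{c(e)}$, and the surviving summand equals
\[
	\pm\bigl((t_0-t_1)(t_1-t_2)(t_2-t_0)\bigr)^{f_G},
\]
a nonzero polynomial in $t_0,t_1,t_2$. Hence $\det M^{\mathsf{ext}}$ is not identically zero as a polynomial in the $a_i$, the edge labels are generic, and the theorem follows.

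I expect the main obstacle to be the uniqueness of the rainbow function, where one must keep careful track of the fact that $c$ is fixed on the \emph{undirected} edges while $\varphi$ and $\varphi'$ differ only in orientation, and must correctly dispose of loops, the no-multi-edge hypothesis, and the (vertex-disjoint) decomposition of the reversed cycle $C$ so that at each vertex exactly one incoming edge is swapped and the monochromatic-cycle contradiction genuinely applies. A secondary, more routine point is the bookkeeping of the Vandermonde signs in the surviving product and the observation that a nonzero value under the substitution $a_e=t_{c(e)}$ certifies that the original determinant is a nonzero polynomial.
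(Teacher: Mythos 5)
Your proposal is correct and is essentially the paper's own argument in a different guise: the paper singles out the distinguished monomial $\prod_e a_e^{c(e)}$ determined by the coloring $c$ of Proposition~\ref{proposition.generic} and argues via Lemma~\ref{lemma.directed cycles} that it cannot cancel against the term of any other edge-injective function, which is exactly your statement that $\varphi$ is the unique rainbow function; your specialization $a_e=t_{c(e)}$ is just the evaluation form of the same argument. The one point you flag---that the set of edges on which $\varphi$ and $\varphi'$ differ need not decompose into vertex-disjoint directed cycles---is treated no more carefully in the paper's proof, and is most cleanly repaired by observing that each color class of the differing edges is balanced (equal in- and out-degree) at every vertex of $G^\star_\varphi$, so if nonempty it contains a monochromatic directed cycle, contradicting condition (2).
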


\begin{proof}
To show that $G$ has a generic edge labeling, we want to show that $\det M^{\mathsf{ext}}$ is a nonzero polynomial in the variables $a_i$
coming from the edge labels. 
By assumption, there is at least one edge-injective function, so that the sum 
in~\eqref{equation.det phi} is nonempty. For a fixed edge-injective function $\varphi$, we will pick a particular monomial in the variables $a_i$
in $\prod_{F \in \mathcal{F}} \det M^{\mathsf{ext}}_{F,\varphi(F)}$, which cannot appear in the term for any other edge-injective function. 
We will do so by using that every edge-injective function can be obtained from another one by successively reversing directed cycles in $G^{\star}$ 
as shown in Lemma~\ref{lemma.directed cycles}. 

As shown before, each term $\det M^{\mathsf{ext}}_{F,\varphi(F)}$ is a Vandermonde determinant $\pm (a_1-a_2) (a_2-a_3) (a_3 - a_1)$, where 
$1,2,3$ are the indices for the edges in $\varphi(F)$. The monomials in this Vandermonde determinant are of the form $a_i^2 a_j^1 a_k^0$ for distinct 
indices $i,j,k\in \{1,2,3\}$. Viewing the edge-injective function $\varphi$ as an assignment of directions on the edges in $G^\star$, we can associate each 
monomial $a_i^2 a_j^1 a_k^0$ by assigning the exponents $2,1,0$ to the edges $i,j,k$, respectively. Hence the three incoming edges in 
$G^\star_\varphi$ for each vertex $F$ in $G^\star$ are assigned distinct numbers (or colors) $0,1,2$.

By Proposition~\ref{proposition.generic}, there exists a coloring of the edges of $G^\star_\varphi$ with colors $0,1,2$ such that:
\begin{enumerate}
\item for each vertex $F$ in $G^\star$ the three incoming edges into $F$ for $G^\star_\varphi$ have different colors $0,1,2$;
\item there are no monochromatic directed cycles (meaning, there are no directed cycles $C$ in $G^\star_\varphi$ for which the edges all have the same 
colors).
\end{enumerate}
Note that the second condition ensures that reversing cycle $C$ changes the monomial in the $a_i$. By Lemmas~\ref{lemma.directed cycles}
and~\ref{lemma.counting}, any two edge-injective functions $\varphi, \psi \in \Phi$ are related by reversing a directed cycle. Since there exists
a coloring without monochrome cycles, this term as a monomial in the $a_i$ cannot cancel. Hence a generic labeling exists. 
\end{proof}

We use the previous theorem to prove the following two results, which together are the heart of the paper.  The first computes the rank of $M^{\mathsf{ext}}$ when $G$ has no proper contractible subset of faces, and the second computes the rank in the minimal contractible case.

\begin{theorem}
\label{theorem.generic edge}
Let $G$ be a graph with the assumptions of this section without any proper contractible subset of faces and $e_G\geqslant 3f_G$.
Then there exists a generic edge labeling and $\rank(M^{\mathsf{ext}}) = 3f_G$.
\end{theorem}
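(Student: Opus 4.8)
The plan is to obtain this statement by stitching together the two main results that precede it, namely Corollary~\ref{cor.existence} and Theorem~\ref{theorem.generic}; the rank formula then drops out of the generic rank identity~\eqref{equation.generic rank}. Essentially all of the difficulty has already been absorbed into those two results, so what remains is short.

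First I would verify the hypotheses of Corollary~\ref{cor.existence}. The graph $G$ is finite, planar, and without leaves by the standing assumptions of this section, it contains no proper contractible subset of faces by hypothesis, and $e_G \geqslant 3f_G$. Hence Corollary~\ref{cor.existence} produces an edge-injective function $\varphi \colon \mathcal{F} \to \mathcal{E}_3$ on $G$.

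With an edge-injective function in hand, I would then apply Theorem~\ref{theorem.generic}. The hypotheses of that theorem are exactly the assumptions of this section---finite, directed, planar, without leaves, without holes, each pair of faces sharing at most one edge, and $e_G \geqslant 3f_G$---all of which $G$ satisfies, together with the existence of $\varphi$ just established. Theorem~\ref{theorem.generic} therefore yields that $G$ has a generic edge labeling, that is, some $3f_G \times 3f_G$ submatrix $N$ of $M^{\mathsf{ext}}$ has $\det N$ not identically zero as a polynomial in the variables $a_i$. This is precisely the first assertion of the theorem.

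Finally I would deduce the rank. For edge labels that are generic and not in special position, equation~\eqref{equation.generic rank} gives $\rank M^{\mathsf{ext}} = \min\{e_G, 3f_G\}$, and since $e_G \geqslant 3f_G$ this equals $3f_G$. Concretely, a nonvanishing $3f_G \times 3f_G$ minor forces $\rank M^{\mathsf{ext}} \geqslant 3f_G$, while $M^{\mathsf{ext}}$ has only $3f_G$ rows and so has rank at most $3f_G$. This establishes both conclusions. The only place where care is needed is checking that the assumptions of this section line up exactly with the hypotheses invoked in Corollary~\ref{cor.existence} and Theorem~\ref{theorem.generic}; there is no genuine additional obstacle, since the substantive work---producing an edge-injective function and upgrading it to a nonvanishing determinant via the monochromatic-cycle-free coloring of Proposition~\ref{proposition.generic}---was already carried out in those earlier results.
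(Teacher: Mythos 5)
Your proposal is correct and matches the paper's proof essentially verbatim: the paper also invokes Corollary~\ref{cor.existence} to produce an edge-injective function, then Theorem~\ref{theorem.generic} to obtain a generic labeling, and concludes via equation~\eqref{equation.generic rank} that $\rank(M^{\mathsf{ext}})=3f_G$. Your additional remarks about checking that the standing assumptions line up and about the minor-versus-row-count bound are fine elaborations but not a different argument.
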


\begin{proof}
By Corollary~\ref{cor.existence}, there exists an edge-injective function on $G$. Furthermore by Theorem~\ref{theorem.generic}, there exists a generic 
labeling $(G,\ell)$. Hence by~\eqref{equation.generic rank}, we have $\rank(M^{\mathsf{ext}}) = 3f_G$.
\end{proof}

\begin{theorem}
\label{theorem.generic edge minimal contractible}
Let $G$ be a graph with the assumptions of this section with a minimal contractible set of faces.
Then there exists a generic edge labeling and $\rank(M^{\mathsf{ext}}) = e_G$.
\end{theorem}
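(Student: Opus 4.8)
The plan is to reduce to the structural classification of minimal contractible graphs already established in Corollary~\ref{corollary.min contract}, and then to dispatch the one nondegenerate case exactly as in the proof of Theorem~\ref{theorem.generic edge}. The observation that sets everything up is that if the full face set $\mathcal{F}$ of $G$ is minimal contractible, then by definition no proper subset $S' \subsetneq \mathcal{F}$ satisfies $e_{S'} - 3f_{S'} \leqslant 0$; in other words, $G$ contains no proper contractible subset of faces. This is precisely the hypothesis needed to invoke Corollary~\ref{cor.existence} later.

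First I would apply Corollary~\ref{corollary.min contract}. Since $G$ is finite, planar, has no leaves, and any two faces share at most one edge, that corollary tells us that either $G$ is a single face (a loop, a $2$-cycle, or a $3$-cycle) or else $e_G = 3f_G$. Minimal contractibility forces $e_G \leqslant 3f_G$, so the largest possible value of $\rank M^{\mathsf{ext}}$ is $\min\{e_G, 3f_G\} = e_G$; it therefore suffices to produce a labeling attaining this bound.

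Next I would handle the two genuinely degenerate single-face graphs by direct computation, since these have $e_G < 3f_G$ and so fall outside the standing assumption $e_G \geqslant 3f_G$. If $G$ is a loop, then $M^{\mathsf{ext}}$ is the single column $(\pm 1, \pm a_1, \pm a_1^2)^{\mathsf{T}}$, which is nonzero and hence has rank $1 = e_G$. If $G$ is a $2$-cycle, then $M^{\mathsf{ext}}$ has the two columns $(\pm 1, \pm a_1, \pm a_1^2)^{\mathsf{T}}$ and $(\pm 1, \pm a_2, \pm a_2^2)^{\mathsf{T}}$, which are linearly independent exactly when $a_1 \neq a_2$; thus generic labels give rank $2 = e_G$. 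The $3$-cycle has $e_G = 3f_G$ and is subsumed by the final case.

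In the remaining case $e_G = 3f_G$ the matrix $M^{\mathsf{ext}}$ is square. Because $G$ has no proper contractible subset of faces and $e_G \geqslant 3f_G$, Corollary~\ref{cor.existence} supplies an edge-injective function $\varphi \colon \mathcal{F} \to \mathcal{E}_3$, and then Theorem~\ref{theorem.generic} yields a generic edge labeling. Applying~\eqref{equation.generic rank} gives $\rank M^{\mathsf{ext}} = \min\{e_G, 3f_G\} = e_G$, completing the argument. The bulk of the difficulty is carried by the cited results, so the only real work is the case bookkeeping; the one point I would flag as requiring care is identifying minimal contractibility of the entire face set with the ``no proper contractible subset'' hypothesis of Corollary~\ref{cor.existence}, and remembering to treat the loop and $2$-cycle by hand since they violate $e_G \geqslant 3f_G$.
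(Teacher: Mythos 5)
Your proposal is correct and follows essentially the same route as the paper: invoke Corollary~\ref{corollary.min contract} to split into the single-face cases and the case $e_G=3f_G$, dispatch the latter via the edge-injective-function machinery (the paper cites Theorem~\ref{theorem.generic edge} directly, which you effectively inline through Corollary~\ref{cor.existence} and Theorem~\ref{theorem.generic}), and verify the single-face cases by hand. Your explicit rank computations for the loop and $2$-cycle simply spell out what the paper leaves as ``easy to check.''
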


\begin{proof}
By Corollary~\ref{corollary.min contract}, since we are assuming that two faces share at most one edge, we have 
that $G$ either consists of a single face or $e_G-3f_G=0$. In the latter case the result follows from Theorem~\ref{theorem.generic edge}. For single faces, 
it is easy to check that generic edge labels exist.
\end{proof}

\section{Conclusion and open questions}
\label{section.algorithm}
\subsection{Algorithm to compute dimension of degree two splines}

The core challenge in computing the dimension of splines of degree two is that the quantity $e-3f$ may not be the same as the rank of 
$M^{\mathsf{ext}}$, or equivalently, the minimum number $e-|\varphi(\mathcal{F})|$ of edges leftover by an edge-injective function $\varphi$. 
The arguments used in Theorem~\ref{theorem: no proper contractible subset means maximal edge-injective function} actually outline a greedy 
algorithm that constructs a maximal edge-injective function, and thus compute the dimension of splines of degree two, 
thereby proving the upper-bound conjecture for $d=2$ for graphs with edge labels not in special position.

In conclusion, we now describe the algorithm derived from Theorem~\ref{theorem: no proper contractible subset means maximal edge-injective function} 
to compute the dimension of degree two splines.
Let $G$ be a finite, directed, planar graph without holes in which any two faces share at most one edge. In this section we allow $G$ to have leaves.  
Consider the following algorithm.

\begin{enumerate}
    \item Order the faces $F_1, F_2, \ldots$ in $G$ so that each $F_{i+1}$ shares an edge with at least one of $F_1, \ldots, F_{i-1}$.  \item Define $\varphi_1(F_1)$ to be any three edges on the boundary of $F_1$ or, if $F_1$ has only one or two edges on its boundary, all edges on the boundary of $F_1$.  
    \item Suppose $\varphi_i$ has been defined on $F_1, \ldots, F_i$.  Assign $\varphi_{i+1}(F_{i+1})$ to be up to three edges on the boundary of $F_{i+1}$ that have not already been assigned, namely are not in $\textup{Im}(\varphi_i)$.
    \item If the edge-injective function is saturated in the sense that
    \[|\varphi_{i+1}(F_{i+1})| = \min\{3, \# \textup{ boundary edges of } F_{i+1}\}\] 
    then increment the index $i$ and return to step (3).
    \item Otherwise, for each edge $e$ on the boundary of $F_{i+1}$ that is not in $\varphi_{i+1}(F_{i+1})$, use the following process from Theorem~\ref{theorem: no proper contractible subset means maximal edge-injective function}:
    \begin{enumerate}
        \item Search along $\varphi_{i+1}$-compatible paths of faces that end in $e$ for a face $F_j$ whose boundary edges have not all been assigned, namely are not all contained in $\textup{Im}(\varphi_{i+1})$.  
        \item If such a face $F_j$ is identified, then update $\varphi_{i+1}$ as specified in Theorem~\ref{theorem: no proper contractible subset means maximal edge-injective function} and go to step (4).
        \item If no such face $F_j$ is identified, then go to step (3).
    \end{enumerate}
\end{enumerate}

The time complexity of step (5a) can be reduced by storing the data of faces $\mathcal{F}_u$ whose bounding edges are not all used, together with, for each other face $F$, a $\varphi_i$-compatible path from a face in $\mathcal{F}_u$ to $F$ if one exists.  This data only needs to be modified whenever $\varphi_{i+1}$ is updated in step (5) and can be identified via standard methods for minimal-weight spanning trees.

If all edge-labels are distinct, this algorithm terminates with an edge-injective function with the property that $e - |\textup{Im}(\varphi_f)|$ is the rank of $M^{\mathsf{ext}}$.  Indeed, the map $\varphi_1$ is by construction saturated. Assume as the inductive hypothesis that $\varphi_i$ has been defined so that the quantity $e-|\textup{Im}(\varphi_i)|$ is as small as possible.  The algorithm extends $\varphi_{i+1}$ in steps (5a) and (5b) unless no face linked to $F_{i+1}$ by $\varphi_{i+1}$-compatible paths has an unassigned bounding edge.  In this case,  Theorem~\ref{theorem: no proper contractible subset means maximal edge-injective function} shows that $F_{i+1}$ is part of a contractible subgraph and reaching step (5c) confirms the edges of the subgraph are all contained in $\textup{Im}(\varphi_{i+1})$.  In other words, the quantity $e-|\textup{Im}(\varphi_{i+1})|$ is as small as possible if step (5c) is attained.  By induction, the algorithm terminates with an edge-injective function $\varphi_f$ whose image has as many edges as possible.

\begin{example}
We apply the above algorithm to compute the dimension of the non-generic labelled graph in Example~\ref{non-generic graph} for two different orderings of the faces. If $\varphi_i$ assigns edge $e$ to face $F$ we draw a small vector on edge $e$ pointing into $F$. Note that $e-3f=0$ for this graph. 

We first implement with optimal choices:
\[
\begin{array}{ccc}
\scalebox{0.5}{\begin{tikzpicture}
\draw (0,-2) to (2,-2);
\draw (0,2) to (2,2);
\draw (0,0) to (0,2);
\draw (0,0) to (0,-2);
\draw (2,0) to (2,-2);
\draw (2,0) to (2,2);
\draw plot [smooth cycle] coordinates {(0,0) (1,0.5) (2,0) (1,-0.5)};
\node at (1,-2){$\bullet$};
\node at (0,-2){$\bullet$};
\node at (2,-2){$\bullet$};
\node at (0,2){$\bullet$};
\node at (0,-2){$\bullet$};
\node at (0,0){$\bullet$};
\node at (2,0){$\bullet$};
\node at (2,2){$\bullet$};
\node at (1,1.2){$F_2$};
\node at (1,0){$F_1$};
\node at (1,-1.2){$F_3$};
\node at (0,)[anchor=south east]{};
\draw[<-] (1,-0.3) to (1, -0.7); 
\draw[<-] (1,0.3) to (1, 0.7); 
\end{tikzpicture}} \hspace{0.5in} &
\scalebox{0.5}{\begin{tikzpicture}
\draw (0,-2) to (2,-2);
\draw (0,2) to (2,2);
\draw (0,0) to (0,2);
\draw (0,0) to (0,-2);
\draw (2,0) to (2,-2);
\draw (2,0) to (2,2);
\draw plot [smooth cycle] coordinates {(0,0) (1,0.5) (2,0) (1,-0.5)};
\node at (1,-2){$\bullet$};
\node at (0,-2){$\bullet$};
\node at (2,-2){$\bullet$};
\node at (0,2){$\bullet$};
\node at (0,-2){$\bullet$};
\node at (0,0){$\bullet$};
\node at (2,0){$\bullet$};
\node at (2,2){$\bullet$};
\node at (0,)[anchor=south east]{};
\node at (1,1.2){$F_2$};
\node at (1,0){$F_1$};
\node at (1,-1.2){$F_3$};
\draw[<-] (1,-0.3) to (1, -0.7); 
\draw[<-] (1,0.3) to (1, 0.7); 
\draw[->] (-0.2,1.2) to (0.2,1.2); 
\draw[<-] (1.8,1.2) to (2.2,1.2); 
\draw[<-] (1,1.8) to (1, 2.2); 
\end{tikzpicture}} \hspace{0.5in} &
\scalebox{0.5}{\begin{tikzpicture}
\draw (0,-2) to (2,-2);
\draw (0,2) to (2,2);
\draw (0,0) to (0,2);
\draw (0,0) to (0,-2);
\draw (2,0) to (2,-2);
\draw (2,0) to (2,2);
\draw plot [smooth cycle] coordinates {(0,0) (1,0.5) (2,0) (1,-0.5)};
\node at (1,-2){$\bullet$};
\node at (0,-2){$\bullet$};
\node at (2,-2){$\bullet$};
\node at (0,2){$\bullet$};
\node at (0,-2){$\bullet$};
\node at (0,0){$\bullet$};
\node at (2,0){$\bullet$};
\node at (2,2){$\bullet$};
\node at (0,)[anchor=south east]{};
\node at (1,1.2){$F_2$};
\node at (1,0){$F_1$};
\node at (1,-1.2){$F_3$};
\draw[<-] (1,-0.3) to (1, -0.7); 
\draw[<-] (1,0.3) to (1, 0.7); 
\draw[->] (-0.2,-1.2) to (0.2,-1.2); 
\draw[<-] (1.8,-1.2) to (2.2,-1.2); 
\draw[->] (-0.2,1.2) to (0.2,1.2); 
\draw[<-] (1.8,1.2) to (2.2,1.2); 
\draw[<-] (1,1.8) to (1, 2.2); 
\draw[<-] (1.5,-1.8) to (1.5, -2.2); 
\end{tikzpicture}} \\
\varphi_1 \hspace{0.5in} & \varphi_2 \hspace{0.5in} & \varphi_3
\end{array}\]

We now implement with suboptimal choices, prompted in part by a less-efficient ordering of faces:
\[
\begin{array}{ccccc}
\scalebox{0.5}{\begin{tikzpicture}
\draw (0,-2) to (2,-2);
\draw (0,2) to (2,2);
\draw (0,0) to (0,2);
\draw (0,0) to (0,-2);
\draw (2,0) to (2,-2);
\draw (2,0) to (2,2);
\draw plot [smooth cycle] coordinates {(0,0) (1,0.5) (2,0) (1,-0.5)};
\node at (1,-2){$\bullet$};
\node at (0,-2){$\bullet$};
\node at (2,-2){$\bullet$};
\node at (0,2){$\bullet$};
\node at (0,-2){$\bullet$};
\node at (0,0){$\bullet$};
\node at (2,0){$\bullet$};
\node at (2,2){$\bullet$};
\node at (1,1.2){$F_3$};
\node at (1,0){$F_2$};
\node at (1,-1.2){$F_1$};
\node at (0,)[anchor=south east]{};
\draw[->] (1,-0.3) to (1, -0.7); 
\draw[->] (-0.2,-1.2) to (0.2,-1.2); 
\draw[<-] (1.8,-1.2) to (2.2,-1.2); 
\end{tikzpicture}} \hspace{0.25in} &
\scalebox{0.5}{\begin{tikzpicture}
\draw (0,-2) to (2,-2);
\draw (0,2) to (2,2);
\draw (0,0) to (0,2);
\draw (0,0) to (0,-2);
\draw (2,0) to (2,-2);
\draw (2,0) to (2,2);
\draw plot [smooth cycle] coordinates {(0,0) (1,0.5) (2,0) (1,-0.5)};
\node at (1,-2){$\bullet$};
\node at (0,-2){$\bullet$};
\node at (2,-2){$\bullet$};
\node at (0,2){$\bullet$};
\node at (0,-2){$\bullet$};
\node at (0,0){$\bullet$};
\node at (2,0){$\bullet$};
\node at (2,2){$\bullet$};
\node at (0,)[anchor=south east]{};
\node at (1,1.2){$F_3$};
\node at (1,0){$F_2$};
\node at (1,-1.2){$F_1$};
\draw[->] (1,-0.3) to (1, -0.7); 
\draw[<-] (1,0.3) to (1, 0.7); 
\draw[->] (-0.2,-1.2) to (0.2,-1.2); 
\draw[<-] (1.8,-1.2) to (2.2,-1.2); 
\end{tikzpicture}} \hspace{0.25in} &
\scalebox{0.5}{\begin{tikzpicture}
\draw (0,-2) to (2,-2);
\draw (0,2) to (2,2);
\draw (0,0) to (0,2);
\draw (0,0) to (0,-2);
\draw (2,0) to (2,-2);
\draw (2,0) to (2,2);
\draw plot [smooth cycle] coordinates {(0,0) (1,0.5) (2,0) (1,-0.5)};
\node at (1,-2){$\bullet$};
\node at (0,-2){$\bullet$};
\node at (2,-2){$\bullet$};
\node at (0,2){$\bullet$};
\node at (0,-2){$\bullet$};
\node at (0,0){$\bullet$};
\node at (2,0){$\bullet$};
\node at (2,2){$\bullet$};
\node at (0,)[anchor=south east]{};
\node at (1,1.2){$F_3$};
\node at (1,0){$F_2$};
\node at (1,-1.2){$F_1$};
\draw[->, red] (1,-0.3) to (1, -0.7); 
\draw[<-] (1,0.3) to (1, 0.7); 
\draw[->] (-0.2,-1.2) to (0.2,-1.2); 
\draw[<-] (1.8,-1.2) to (2.2,-1.2); 
\end{tikzpicture}} \hspace{0.25in} &
\scalebox{0.5}{\begin{tikzpicture}
\draw (0,-2) to (2,-2);
\draw (0,2) to (2,2);
\draw (0,0) to (0,2);
\draw (0,0) to (0,-2);
\draw (2,0) to (2,-2);
\draw (2,0) to (2,2);
\draw plot [smooth cycle] coordinates {(0,0) (1,0.5) (2,0) (1,-0.5)};
\node at (1,-2){$\bullet$};
\node at (0,-2){$\bullet$};
\node at (2,-2){$\bullet$};
\node at (0,2){$\bullet$};
\node at (0,-2){$\bullet$};
\node at (0,0){$\bullet$};
\node at (2,0){$\bullet$};
\node at (2,2){$\bullet$};
\node at (0,)[anchor=south east]{};
\node at (1,1.2){$F_3$};
\node at (1,0){$F_2$};
\node at (1,-1.2){$F_1$};
\draw[<-, red] (1,-0.3) to (1, -0.7); 
\draw[<-] (1,0.3) to (1, 0.7); 
\draw[->] (-0.2,-1.2) to (0.2,-1.2); 
\draw[<-] (1.8,-1.2) to (2.2,-1.2); 
\draw[<-,red] (0.5,-1.8) to (0.5, -2.2); 
\end{tikzpicture}} \hspace{0.25in} &
\scalebox{0.5}{\begin{tikzpicture}
\draw (0,-2) to (2,-2);
\draw (0,2) to (2,2);
\draw (0,0) to (0,2);
\draw (0,0) to (0,-2);
\draw (2,0) to (2,-2);
\draw (2,0) to (2,2);
\draw plot [smooth cycle] coordinates {(0,0) (1,0.5) (2,0) (1,-0.5)};
\node at (1,-2){$\bullet$};
\node at (0,-2){$\bullet$};
\node at (2,-2){$\bullet$};
\node at (0,2){$\bullet$};
\node at (0,-2){$\bullet$};
\node at (0,0){$\bullet$};
\node at (2,0){$\bullet$};
\node at (2,2){$\bullet$};
\node at (0,)[anchor=south east]{};
\node at (1,1.2){$F_3$};
\node at (1,0){$F_2$};
\node at (1,-1.2){$F_1$};
\draw[<-] (1,-0.3) to (1, -0.7); 
\draw[<-] (1,0.3) to (1, 0.7); 
\draw[->] (-0.2,-1.2) to (0.2,-1.2); 
\draw[<-] (1.8,-1.2) to (2.2,-1.2); 
\draw[->] (-0.2,1.2) to (0.2,1.2); 
\draw[<-] (1.8,1.2) to (2.2,1.2); 
\draw[<-] (1,1.8) to (1, 2.2); 
\draw[<-] (0.5,-1.8) to (0.5, -2.2); 
\end{tikzpicture}} \\
\varphi_1 \hspace{0.25in} & \varphi_2 \hspace{0.25in} & \varphi_2 \hspace{0.25in} & \textup{ new }\varphi_2 \hspace{0.25in} & \varphi_3 \\
& \textup{ initialized } \hspace{0.25in} & \textup{ (5a) succeeds } \hspace{0.25in} & \textup{ after (5b)} \hspace{0.25in} & 
\end{array}\]
In this second case, no matter how $\varphi_3$ is initialized, there is no $\varphi_3$-compatible path of faces from the top face to the bottom face.  This means that if the top face had fewer boundary edges, the algorithm would reach step (5c) at this point, reflecting the (essentially equivalent) facts that 
\begin{itemize}
    \item $\{F_2, F_3\}$ form a contractible subset of faces,
    \item edges cannot be reassigned from outside $\{F_2, F_3\}$ to give $\varphi(\{F_2, F_3\})$ more, and
    \item the cardinality $|\textup{Im}(\varphi_3)|$ is as large as possible. 
\end{itemize}
So $e-|\textup{Im}(\varphi_3)|=1$ as desired.
\end{example}

Alternatively, to compute the dimension of the spline $\mathsf{Spl}_2(G,\ell; v_0)$, find a minimal contractible subset of faces (see 
Definition~\ref{definition.contractible}). If a minimal contractible subset of faces exists, contract it. By Theorem~\ref{theorem.generic edge minimal contractible} 
a generic edge labeling for this minimal contractible subset of faces exists.  Hence by Corollary~\ref{corollary.dim decomp}, the dimension of the space of 
splines does not change under this operation. Furthermore, the contracted graph still has the properties that it is finite, planar, without holes, and no two 
faces share more than one edge. Repeat this process until what remains is a graph $G$ with no proper contractible subset of faces. By 
Theorems~\ref{theorem.generic edge} and~\ref{theorem.spline rank}
\[
	\dim \mathsf{Spl}_2(G,\ell; v_0) = e_G - 3f_G
\]
which is also the dimension of the space of splines for the original graph.	

In conclusion, the two algorithms in this section compute the dimensions of splines of degree two when the edge labels are not in special position.

\subsection{Open questions} 
\label{section: open questions}

We end with some directions for future research.

Section~\ref{section.homogeneous} described a method to homogenize splines that is different from the usual homogenization process of algebraic geometry, which embeds objects in a larger projective space.  By contrast, our homogenization translates all lines simultaneously to the origin, essentially decreasing dimension.  This raises a natural question.

\begin{question}\label{question: homogenization means what?}
Does the homogenization process of Section~\ref{section.homogeneous} have a geometric interpretation, either for general graphs or for graphs dual to planar triangulations?
\end{question}

Corollary~\ref{corollary: contractible triangulation means subdivided triangle} showed that if a triangulation $\Delta$ in the plane is dual to contractible graph, then $\Delta$ is a subdivided triangle. Note that triangulations on surfaces other than the plane behave differently and cases where $e-3f<0$ can occur naturally.  For instance, drawing a cycle of length three along the equator partitions the (two-dimensional) sphere into two faces with three-cycles along their boundary.  The dual to this triangulation is the multigraph consisting of two vertices with three edges between them, which has two minimal contractible faces, each with two boundary edges.  This gives rise to the following.

\begin{question} \label{question: whiteley comment}
How many of the results of this paper, including the descriptions of contractibility from Section~\ref{section.contractible}, can be extended to splines on surfaces of higher genus?
\end{question}

\bibliography{main}{}
\bibliographystyle{amsalpha}

\end{document}